\documentclass{article}
\usepackage[utf8]{inputenc}
%\usepackage[english]{babel}
%\usepackage{biblatex}
%\addbibresource{bibliography.bib}
\usepackage{ mathrsfs }
\usepackage{comment}
\usepackage{tikz}
\usepackage{indentfirst}
\usepackage[hidelinks]{hyperref}
\usepackage{enumitem}

\usepackage{amsrefs}
\usepackage{thm-restate}

\usepackage{amsthm}
\usepackage{amsfonts}
\newcommand{\lp}{\left(}
\newcommand{\rp}{\right)}
\newcommand{\lb}{\left[}
\newcommand{\rb}{\right]}
\newcommand{\Per}{\mathcal P}
\newtheorem{theorem}{Theorem}[section]
\newtheorem{lemma}[theorem]{Lemma}
\newtheorem{definition}[theorem]{Definition}
\newtheorem{corollary}[theorem]{Corollary}

\title{Quasi-isometries of relatively hyperbolic groups with an elementary hierarchy}
\author{Aaron W. Messerla}
\date{}

\begin{document}
\maketitle
\begin{abstract}
    Sela introduced limit groups in his work on the Tarski problem, and showed that each limit group has a cyclic hierarchy. In this paper, a class of relatively hyperbolic groups, equipped with a hierarchy similar to the one for limit groups, is shown to be closed under quasi-isometry. Additionally, these groups share some of the properties of limit groups. In particular, groups quasi-isometric to limit groups are shown to be LERF and virtually toral relatively hyperbolic.
\end{abstract}

%\cite{Wise}{Theorem 7.54}

%\section{Facts About Boundaries}

%\begin{theorem}\label{HrusakEnds}
%[Haulmark 1.3 \cite{HaulmarkLocalCutPoints}] Let $X$ and $Y$ be two generalized Peano continua, and assume that $G$ is a finitely generated group acting properly and cocompactly by homeomorphisms on $X$ and $Y$. Then $\textrm{Ends}\lp X\rp$ is homeomorphic to $\textrm{Ends}\lp Y\rp$.
%\end{theorem}

%In the current setting, this is used to show that a peripheral subgroup $P$ of $\lp G,\Per\rp$ has the same number of ends as the complement of the parabolic point in $\partial\lp G,\Per\rp$ associated to $P$.

%\section{QI invariance of Vertices in Bowditch splitting}

%This is a modification by the arguments of Papasoglu-Whyte to apply in the relatively hyperbolic case.

%%\begin{lemma}
%Let $\lp G,\Per\rp$ be the fundamental group of a graph of groups $\mathcal G$ so that the vertex groups of $\mathcal G$ are relatively one-ended and hyperbolic relative to the elements of $\Per$ that they contain, and the edge groups are finite. Suppose that $\lp H,\Per\rp$ is relatively $1$--ended and $\lp H,\Per'\rp\rightarrow\lp G,\Per\rp$ is a quasi-isometric embedding that coarsely respects peripherals. Then $H$ is contained in a bounded neighborhood of one of the vertex groups of $\mathcal G$.
%\end{lemma}
%\begin{proof}
%Let $X$ be the coned off Cayley graph of $H$ and $Y$ be the coned off Cayley graph of $G$. Then the quasi-isometric embedding $H\rightarrow G$ induces a map $X\rightarrow Y$ that takes infinite valence vertices to within bounded distance of infinite valence vertices. 
%\end{proof}
\section{Introduction}

One of the central questions in geometric group theory is to understand useful quasi-isometry invariants of finitely generated groups. One such invariant is the number of ends of a group. Stallings showed that a group with more than one end splits over a finite subgroup \cite{Stallings}. In the case of hyperbolic groups, the boundary is another useful quasi-isometry invariant.  The topology of the boundary of a hyperbolic group gives information about splittings of the group. Stallings' Theorem on ends of groups indicates that a hyperbolic group splits over a finite group if and only if the boundary is disconnected. In \cite{BowditchCutPoints}, Bowditch showed that cut pairs in the boundary of a hyperbolic group which is not virtually Fuchsian correspond to splittings over $2$--ended subgroups, and constructed a canonical JSJ splitting for such $1$--ended hyperbolic groups. In particular, since the boundary is invariant under quasi-isometry, then so is the property of splitting over a finite or $2$--ended group, except in the special case of groups with boundary homeomorphic to the circle.

Dru{\c t}u showed that the property of being relatively hyperbolic is also a quasi-isometry invariant \cite{DrutuPeriphStab} (see also the work of Behrstock, Dru{\c t}u, and Mosher \cite{BDMPeriphStab}). Associated with a relatively hyperbolic group is a boundary, introduced by Bowditch in \cite{BowditchRH}. Following and generalizing ideas in \cite{BowditchCutPoints}, information about splittings of a relatively hyperbolic group relative to its peripheral structure can be seen from its Bowditch boundary. This was done in the case of splitting over finite groups by Bowditch in \cite{BowditchRH}, over peripheral groups by Bowditch in \cite{BowditchPeriph}, and over $2$--ended groups by Haulmark and Hruska in \cite{HHCanonSplittings}. In particular, the JSJ built in \cite{HHCanonSplittings} is quasi-isometry invariant in the sense that if there is a quasi-isometry between two relatively $1$--ended relatively hyperbolic groups which respects the peripheral structures, then there is an isomorphism between the JSJ trees which preserves information about the types of vertex groups (\cite{HHCanonSplittings}*{Corollary 1.3}).  %It is possible to recover an abelian (or cyclic) hierarchy of a limit group purely from the Bowditch boundary of the group.

Limit groups were introduced by Sela in \cite{Sela1} as part of his work on the Tarski problem. Sela showed that limit groups have cyclic hierarchies (the analysis lattice in Sela's work), which terminate in free, abelian, or surface groups. Using this hierarchy, Dahmani showed in \cite{Dahmani} that limit groups are relatively hyperbolic relative to their maximal non-cyclic abelian subgroups. Alibegovi\'c also proved a combination result for relatively hyperbolic groups, and similarly showed that limit groups are hyperbolic relative to maximal abelian subgroups in \cite{Alibegovic}. Dahmani additionally showed that finitely generated subgroups of limit groups are relatively quasi-convex \cite{Dahmani}*{Theorem 4.6}.

In this paper, a class of groups with hierarchies that resemble those of limit groups are shown to have properties similar to those of limit groups. A group $G$ is said to be in $\mathcal{ATEH}$ if $G$ is hyperbolic relative to virtually abelian subgroups and has a finite elementary hierarchy relative to $\Per$ terminating in virtually free, virtually abelian, or virtually Fuchsian groups (Definition \ref{ATEHDef}). Since the analysis lattice functions as the required elementary hierarchy, any limit group is in $\mathcal{ATEH}$. The first result is that this class of groups is closed under quasi-isometry:

\begin{theorem}\label{QIClosed}
Suppose $G\in \mathcal{ATEH}$ and that $G'$ is quasi-isometric to $G$, then $G'\in\mathcal{ATEH}$.
\end{theorem}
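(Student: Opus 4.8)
The plan is to transport the relatively hyperbolic structure across the quasi-isometry and then induct on the depth $n$ of the elementary hierarchy of $G$ relative to $\Per$. First, by Dru\c{t}u's quasi-isometry invariance of relative hyperbolicity (\cite{DrutuPeriphStab}; see also \cite{BDMPeriphStab}), $G'$ is hyperbolic relative to a collection $\Per'$ of subgroups each quasi-isometric to a member of $\Per$, and the quasi-isometry respects the two peripheral structures. Since every member of $\Per$ is virtually abelian and being virtually abelian is a quasi-isometry invariant, every member of $\Per'$ is virtually abelian, so $(G',\Per')$ already satisfies the ``hyperbolic relative to virtually abelian subgroups'' half of the definition of $\mathcal{ATEH}$; it also follows that the quasi-isometry induces a homeomorphism $\partial(G,\Per)\to\partial(G',\Per')$ of Bowditch boundaries, through which the remaining (hierarchical) structure will be moved.

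For the base case $n=0$, the group in question is virtually free, virtually abelian, or virtually Fuchsian, so it is enough to observe that each of these three classes is closed under quasi-isometry. Virtually abelian is classical; virtually free holds because a finitely generated group quasi-isometric to a free group is virtually free (it is hyperbolic with totally disconnected boundary, hence accessible over finite subgroups with finite vertex groups); and virtually (cocompact) Fuchsian holds because a group quasi-isometric to such a group is hyperbolic with boundary a circle, hence virtually Fuchsian by the convergence-group theorem of Tukia, Casson--Jungreis, and Gabai.

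For the inductive step, $n\ge 1$, I would pass to the canonical elementary decomposition of $(G,\Per)$: first Bowditch's relative analogue of Stallings' theorem (\cite{BowditchRH}) splits $G$ over finite subgroups into relatively one-ended (or elementary) factors, with relative accessibility ensuring a finite graph of groups; then, inside each relatively one-ended factor, the Haulmark--Hruska JSJ over $2$--ended subgroups (\cite{HHCanonSplittings}) produces vertex groups of four types --- $2$--ended, peripheral, maximal hanging Fuchsian, and rigid. This composite decomposition is canonical, and by Dru\c{t}u's theorem together with \cite{BowditchRH} and \cite{HHCanonSplittings}*{Corollary 1.3} it is a quasi-isometry invariant: $G'$ admits the isomorphic decomposition, with matching vertex-group types, and --- using that the relatively one-ended (in particular the rigid) vertex groups are relatively quasiconvex --- the quasi-isometry $G\to G'$ restricts to quasi-isometries between corresponding vertex groups respecting their induced peripheral structures. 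The $2$--ended, peripheral and hanging-Fuchsian vertex groups of $G'$ are respectively virtually free (indeed virtually abelian), virtually abelian, and virtually Fuchsian, hence legitimate leaves; for a rigid vertex group $R'$ of $G'$ one applies the inductive hypothesis to the quasi-isometric pair $(R,\Per_R)$, $(R',\Per_{R'})$, using the structural fact that a group in $\mathcal{ATEH}$ with an elementary hierarchy of depth $n\ge 1$ has every rigid vertex group of its canonical $2$--ended JSJ again in $\mathcal{ATEH}$, with an elementary hierarchy of strictly smaller depth. Then $R'\in\mathcal{ATEH}$, and reassembling the pieces exhibits $G'$ as the fundamental group of a graph of groups over finite and $2$--ended edge groups, with $\Per'$ elliptic and every vertex group in $\mathcal{ATEH}$ of strictly smaller depth; hence $G'\in\mathcal{ATEH}$.

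The main obstacle is the structural lemma of the inductive step, together with the vertex-group matching that feeds it. On one side, one must pin down the a priori non-canonical elementary hierarchy of $G$ against the canonical finite-then-$2$--ended decomposition and verify that the rigid vertex groups inherit genuine $\mathcal{ATEH}$-hierarchies of smaller depth --- rather than merely sitting inside groups that have them --- which forces a careful choice of induced peripheral structure on vertex groups (so that the edge groups produced by the JSJ are accounted for) and must use the precise form of Definition \ref{ATEHDef}. On the other side, one must upgrade \cite{HHCanonSplittings}*{Corollary 1.3} from an isomorphism of JSJ trees to a matching of vertex groups by quasi-isometries that preserve both the vertex-group types and the induced peripheral structures, since only such a matching lets the inductive hypothesis be applied to the rigid pieces. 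Relative accessibility --- for both finite and $2$--ended edge groups --- is what guarantees that the hierarchy produced for $G'$ in this way is again finite.
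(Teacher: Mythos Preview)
Your approach is essentially the paper's: transport the peripheral structure, pass to the canonical BHH--JSJ hierarchy (whose finiteness for groups in $\mathcal{ATEH}$ is exactly the structural lemma you flag as the main obstacle, and is the paper's Theorem~\ref{MakeGood}), and induct via a quasi-isometry matching of vertex groups (the paper's Lemma~\ref{BulkLemma}, which uses Papasoglu--Whyte for the finite-edge part and Mackay--Sisto boundary quasisymmetries for the rigid pieces). One small correction: the Haulmark--Hruska JSJ used here is over \emph{elementary} subgroups relative to $\Per$, not just $2$--ended ones, so edge groups can be peripheral and your final clause should read ``over finite and elementary edge groups'' rather than ``over finite and $2$--ended edge groups''.
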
 

Restricting Theorem \ref{QIClosed} to the case where $G\in\mathcal{ATEH}$ is hyperbolic (with empty peripheral structure) and has no 2-torsion, one can recover a corollary to work of Carrasco and Mackay \cite{CarrascoMackayConfDim}*{Corollary 1.2}. Specifically, in \cite{CarrascoMackayConfDim} it is shown that if a hyperbolic group $G$ has no 2-torsion, then $G$ has a finite hierarchy over elementary subgroups ending in finite or virtually Fuchsian groups if and only if $\partial G$ has conformal dimension equal to $1$. As the conformal dimension of the boundary is a quasi-isometry invariant, then so is having such a hierarchy.  In this paper, however, we do not use the conformal dimension techniques of \cite{CarrascoMackayConfDim}. 

The structure of the hierarchy guarantees certain algebraic properties of groups in $\mathcal{ATEH}$. In particular, like limit groups, if $G\in \mathcal{ATEH}$, then $G$ is locally relatively quasi-convex (i.e. every finitely generated subgroup of $G$ is relatively quasi-convex). Special cube complexes, introduced by Haglund and Wise in \cite{HaglundWise}, have been studied extensively as a means to recover separability properties in virtually compact special groups. Wise showed that limit groups are virtually compact special \cite{WiseQCH}*{Theorem 18.7}. In \cite{WiltonHallsTheoremLimitGroups}, Wilton showed that limit groups are LERF. Results from the theory of special cube complexes from \cite{WiseQCH},\cite{oregonvspecial}, and \cite{SageevWiseCores}  are used in this paper to show the same for groups in $\mathcal{ATEH}$.

\begin{theorem}\label{ATEHisLerf}
Suppose that $G\in\mathcal{ATEH}$, then $G$ is LERF.
\end{theorem}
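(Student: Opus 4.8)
The plan is to obtain the LERF property of a group $G \in \mathcal{ATEH}$ from two features of such groups: that $G$ is locally relatively quasi-convex, so that every finitely generated subgroup $H \le G$ is relatively quasi-convex, and that $G$ is virtually compact special. Granting the latter, the point is that relatively quasi-convex subgroups of $G$ are separable: since $G$ is hyperbolic relative to virtually abelian subgroups, which are LERF (being finitely generated virtually abelian), one may pass to a finite-index subgroup of $G$ acting properly and cocompactly on a compact special cube complex, realise a finite-index subgroup of a conjugate of $H$ as the stabiliser of a convex subcomplex via the cores for quasiconvex actions of \cite{SageevWiseCores} (first adjusting the parabolic factors to make $H$ a full relatively quasi-convex subgroup, which does not obstruct separability because the peripheral subgroups are LERF), and then invoke the canonical completion and retraction of \cite{HaglundWise} together with the virtual-specialness techniques of \cite{oregonvspecial} to conclude that this stabiliser is a virtual retract, in particular separable. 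Pushing separability back down to $H$ and letting $H$ vary shows $G$ is LERF.

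The crux is therefore the claim that every group in $\mathcal{ATEH}$ is virtually compact special, which I would prove by induction on the length of the elementary hierarchy relative to $\Per$ of Definition \ref{ATEHDef}. The base cases are virtually free, virtually abelian, and virtually Fuchsian groups: free groups are special, free abelian groups act freely and cocompactly on cubulated Euclidean space with torus quotient, and surface and hyperbolic $2$-orbifold groups are virtually compact special by \cite{WiseQCH}; since virtual compact specialness is a commensurability invariant, the finite-index overgroups occurring in a hierarchy are covered too. For the inductive step, $G$ splits as a finite graph of groups relative to $\Per$ whose vertex groups lie in $\mathcal{ATEH}$ with strictly shorter hierarchies, hence are virtually compact special by induction, and whose edge groups are elementary subgroups of the relatively hyperbolic group $G$, so each is two-ended or parabolic and in either case relatively quasi-convex. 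After passing to a finite-index subgroup one arranges this hierarchy to be malnormal (equivalently, the splittings acylindrical) with compact special vertex groups, and then invokes Wise's machinery for relatively hyperbolic groups with a quasi-convex hierarchy: the Malnormal Special Quotient Theorem and the special combination theorems of \cite{WiseQCH}, applied through Dehn filling to reduce the parabolic pieces to the hyperbolic setting and then transferred back, yield that $G$ is virtually compact special, still hyperbolic relative to virtually abelian subgroups. This closes the induction.

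The main obstacle is making Wise's machinery genuinely applicable in the inductive step, where three things need care. First, the elementary hierarchy supplied by Definition \ref{ATEHDef} must be upgraded to one meeting the malnormality and acylindricity hypotheses of the combination theorems — the maximal two-ended and parabolic subgroups of a relatively hyperbolic group form an almost malnormal family only after suitable replacement — and one must check that the finite-index subgroup one passes to still lies in $\mathcal{ATEH}$ and still inherits a hierarchy of no greater length, so the induction is not broken. Second, the parabolic (virtually abelian) edge and vertex pieces are not hyperbolic, so the hyperbolic combination results apply only after Dehn filling the peripheral subgroups; one must choose the fillings so as to preserve virtual specialness of the vertex groups and to recover the special structure on $G$ on passing back, paralleling Wise's treatment of the limit group case. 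Third, the peripheral structure of $G$ must be tracked throughout, so that the output is recognised as hyperbolic relative to virtually abelian subgroups — needed both to close the induction and to run the separability argument of the first paragraph. Once these points are handled, Theorem \ref{ATEHisLerf} follows.
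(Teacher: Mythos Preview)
Your high-level strategy matches the paper's: first establish that $G$ is locally relatively quasi-convex (the paper's Theorem \ref{lrqc}), then use special cube complex technology together with \cite{SageevWiseCores} to separate relatively quasi-convex subgroups. The divergence is in how specialness is obtained. You aim to prove, by induction along the hierarchy via Wise's combination theorems and Dehn filling, that $G$ itself is virtually \emph{compact} special. The paper never claims this. Instead it observes (Lemma \ref{use1817}) that the hierarchy terminates in virtually strongly sparse special groups, invokes \cite{WiseQCH}*{Remark 18.17} as a black box to conclude that $G$ is virtually strongly \emph{sparse} special (Corollary \ref{VSpecialNonCompact}), and then uses \cite{WiseQCH}*{Theorem 7.54} to embed that finite-index subgroup into a genuinely larger group $\hat G$ which is the fundamental group of a \emph{compact} nonpositively curved cube complex; $\hat G$ is virtually special by \cite{oregonvspecial}. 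Separability is then run inside $\hat G$ via \cite{SageevWiseCores}*{Corollary 6.4} and pulled back to $G$ by the standard finite-index and intersection arguments. Your route, if completed, would yield the stronger conclusion that $G$ is virtually compact special; but the three obstacles you flag --- malnormalising without breaking the induction, pushing specialness through Dehn filling and back, and tracking the peripheral structure --- are precisely what the paper's sparse-then-embed manoeuvre sidesteps by quoting off-the-shelf results. The paper's approach is less ambitious but considerably shorter, and your proposal as written leaves those obstacles as genuine work still to be done.
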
 

As limit groups are torsion-free and hyperbolic relative to abelian groups, it is of interest to investigate a smaller class of groups in $\mathcal{ATEH}$, namely those that are toral relatively hyperbolic (i.e. torsion-free and hyperbolic relative to abelian subgroups). We call the class of toral relatively hyperbolic groups with an elementary hierarchy and controlled terminal groups $\mathcal{TEH}$. Groups in $\mathcal{ATEH}$ are shown to be virtually torsion-free, and the separability properties of groups in $\mathcal{ATEH}$ allow us to show:

\begin{restatable}{theorem}{ThmATEHisVTEH}\label{ATEHisVTEH}
Suppose that $G\in\mathcal{ATEH}$, then $G$ is virtually in $\mathcal{TEH}$.
\end{restatable}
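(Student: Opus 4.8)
\emph{Proof sketch.} The plan is to produce a single finite-index subgroup $H\leq G$ that simultaneously witnesses membership in $\mathcal{TEH}$, and the natural candidate is a finite-index \emph{compact special} subgroup. The results used to prove Theorem~\ref{ATEHisLerf} (and used there to deduce that $G$ is virtually torsion-free) show that $G$ is virtually compact special; fix such an $H$. Since compact special groups are subgroups of right-angled Artin groups, $H$ is torsion-free, and moreover every subgroup of $H$ embeds in a right-angled Artin group; the latter is the property to be exploited repeatedly. It then suffices to check that $H$ is toral relatively hyperbolic and carries a finite elementary hierarchy relative to its induced peripheral structure terminating in free, free abelian, or surface groups.

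For the first point, finite-index subgroups of relatively hyperbolic groups are relatively hyperbolic with the induced peripheral structure, so $H$ is hyperbolic relative to the $H$-conjugacy classes of the subgroups $H\cap P^{g}$, where $P$ ranges over the peripheral subgroups of $G$ and $g$ over a suitable finite set of representatives. Each such $H\cap P^{g}$ is finitely generated, has finite index in the virtually abelian group $P^{g}$, hence is virtually abelian, and it is a subgroup of a right-angled Artin group; a finitely generated virtually abelian subgroup of a right-angled Artin group is free abelian (it is residually torsion-free nilpotent, each torsion-free nilpotent quotient is again virtually abelian and therefore abelian, so it is residually torsion-free abelian and, being finitely generated, free abelian). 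Together with torsion-freeness of $H$, this shows $H$ is toral relatively hyperbolic.

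For the hierarchy, I would prove by induction on the height of the hierarchy in Definition~\ref{ATEHDef} the following slight strengthening: if $K\in\mathcal{ATEH}$ and $K'\leq K$ is a finite-index subgroup that embeds in a right-angled Artin group, then $K'\in\mathcal{TEH}$. In the base case $K$ is virtually free, virtually abelian, or virtually Fuchsian, and the torsion-free, RAAG-embeddable finite-index subgroup $K'$ is correspondingly free, free abelian, or a surface group, i.e. a terminal group allowed for $\mathcal{TEH}$. For the inductive step, take a splitting of $K$ relative to $\Per$ over elementary subgroups as a finite graph of groups with vertex groups $K_v\in\mathcal{ATEH}$ of strictly smaller hierarchy height and elementary edge groups; local relative quasiconvexity of groups in $\mathcal{ATEH}$, recorded earlier, guarantees the relevant vertex and edge subgroups are finitely generated and relatively quasiconvex. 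Restricting the Bass--Serre tree to $K'$ gives a finite graph-of-groups decomposition of $K'$ whose edge groups are finite-index subgroups of conjugates of the elementary edge groups --- hence finitely generated, virtually abelian, torsion-free, and RAAG-embeddable, so free abelian, i.e. elementary in the toral sense --- and whose vertex groups are the subgroups $K'\cap K_v^{g}$. Each $K'\cap K_v^{g}$ has finite index in $K_v^{g}\cong K_v\in\mathcal{ATEH}$ and embeds in a right-angled Artin group, so by the inductive hypothesis it lies in $\mathcal{TEH}$ with a hierarchy of strictly smaller height, and in particular is toral relatively hyperbolic with its induced peripheral structure. Since the peripheral subgroups of $K$ are elliptic in the original splitting, those of $K'$ remain elliptic and can be conjugated into the vertex groups, so the induced splitting is relative to the peripheral structure of $K'$; stacking it on top of the hierarchies of the vertex groups yields a finite elementary hierarchy for $K'$ with terminal groups free, free abelian, or surface groups. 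Applying this with $K=G$ and $K'=H$ completes the proof.

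The main obstacle is the coordination of a single finite-index subgroup across every level of the (finite, but possibly tall) hierarchy while keeping the peripheral bookkeeping under control: one must ensure that each Bass--Serre restriction of a step of the hierarchy is again a splitting relative to the peripheral structure, that edge and peripheral subgroups are upgraded from virtually abelian to abelian uniformly, and that the inherited vertex groups are genuinely relatively hyperbolic in their induced peripheral structure. Choosing $H$ compact special resolves the coordination cleanly, since embeddability in a right-angled Artin group is inherited by all subgroups and hence propagates automatically down the hierarchy.
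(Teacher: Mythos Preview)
Your argument is essentially correct but contains one imprecision: the paper does not show that $G$ is virtually \emph{compact} special. Corollary~\ref{VSpecialNonCompact} only produces a finite-index subgroup that is the fundamental group of a strongly \emph{sparse} special cube complex, and Theorem~\ref{VirtualEmbedInVirtualSpecial} embeds that subgroup into a larger virtually compact special group $\hat G$; it does not exhibit a compact special finite-index subgroup of $G$ itself. Fortunately your argument never uses compactness: you only need a finite-index $H\leq G$ embedding in a right-angled Artin group, and this already follows from Corollary~\ref{VSpecialNonCompact}, since fundamental groups of special (not necessarily compact) cube complexes embed in RAAGs.

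With that fix, your route genuinely differs from the paper's in two places. To make the peripheral subgroups abelian, you use that finitely generated virtually abelian subgroups of a RAAG are free abelian, via residual torsion-free nilpotence; the paper instead proves Lemma~\ref{GisVHRelAb} by first passing to a torsion-free subgroup (Lemma~\ref{GisVTF}), then using residual finiteness to see that each maximal abelian finite-index $P_i'\leq P_i$ is separable, and finally invoking LERF to separate $P_i'$ from its cosets in $P_i$. For the hierarchy, you restrict a given $\mathcal{ATEH}$ hierarchy of $G$ to $H$ and induct on its height, whereas the paper simply observes that the finite-index toral subgroup has the same Bowditch boundary as $G$, so its BHH--JSJ hierarchy---which is determined by the boundary---is finite with the required terminal types. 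Your approach is more hands-on and avoids re-invoking the LERF and boundary machinery once the RAAG embedding is available; the paper's is shorter because it recycles results (Theorem~\ref{MakeGood}, Theorem~\ref{lerfness}) already established.
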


The following is immediate from Theorems 1.1 and 1.3, and the fact that limit groups are in $\mathcal{ATEH}$. 

\begin{corollary}
If $G$ is quasi-isometric to a limit group, then $G$ is virtually in $\mathcal{TEH}$.
\end{corollary}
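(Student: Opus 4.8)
The plan is to chain together the two main theorems of the paper with the observation, already recorded in the introduction, that every limit group lies in $\mathcal{ATEH}$. Concretely, suppose $G$ is quasi-isometric to a limit group $L$. Sela's analysis lattice equips $L$ with a cyclic --- hence elementary --- hierarchy relative to the peripheral structure given by its maximal non-cyclic abelian subgroups, and this hierarchy terminates in free, abelian, or surface groups; combined with the result of Dahmani (and independently Alibegovi\'c) that $L$ is hyperbolic relative to its maximal non-cyclic abelian subgroups, this exhibits $L$ as a member of $\mathcal{ATEH}$ in the sense of Definition \ref{ATEHDef}.

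From here the argument is immediate. First I would invoke Theorem \ref{QIClosed}: since $G$ is quasi-isometric to $L$ and $L\in\mathcal{ATEH}$, we get $G\in\mathcal{ATEH}$. Then I would apply Theorem \ref{ATEHisVTEH}, which says that any group in $\mathcal{ATEH}$ is virtually in $\mathcal{TEH}$; applied to $G$, this gives exactly the conclusion that $G$ is virtually in $\mathcal{TEH}$.

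There is essentially no obstacle once Theorems \ref{QIClosed} and \ref{ATEHisVTEH} are available; the only point meriting an explicit (and routine) check is that limit groups really do satisfy Definition \ref{ATEHDef} --- namely that the analysis lattice can be taken to be a \emph{finite} elementary hierarchy, that each splitting appearing in it is over a virtually elementary subgroup compatible with the peripheral structure, and that its terminal vertex groups (free, abelian, or closed surface groups) are among the allowed terminal types (virtually free, virtually abelian, virtually Fuchsian). I would settle this by citing \cite{Sela1} for the hierarchy and \cite{Dahmani}, \cite{Alibegovic} for the relative hyperbolicity relative to maximal non-cyclic abelian subgroups, observing that free, abelian, and surface groups are literally instances of the permitted terminal groups, so no further work is needed.
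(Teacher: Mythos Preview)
Your proposal is correct and follows exactly the paper's approach: the paper states that the corollary ``is immediate from Theorems 1.1 and 1.3, and the fact that limit groups are in $\mathcal{ATEH}$,'' which is precisely the chain of implications you describe.
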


Lastly, freely indecomposable and non-abelian limit groups were shown to split over cyclic subgroups by Sela \cite{Sela1}*{Theorem 3.2}. A comparable result is shown for groups in $\mathcal{ATEH}$. This allows us to show that, like limit groups, groups in $\mathcal{ATEH}$ actually have a finite hierarchy where all splittings are over virtually cyclic groups.

\begin{theorem}
 Suppose that $G\in\mathcal{ATEH}$, then $G$ has a finite virtually cyclic hierarchy terminating in virtually free, virtually abelian, or virtually Fuchsian groups.
\end{theorem}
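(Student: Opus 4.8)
The plan is to build the desired hierarchy by a recursion modelled on Sela's construction of the analysis lattice, using the analogue of \cite{Sela1}*{Theorem 3.2} obtained above: a freely indecomposable group in $\mathcal{ATEH}$ that is neither virtually abelian nor virtually Fuchsian splits over a virtually cyclic subgroup relative to its peripheral structure. At each stage of the recursion one has a group in $\mathcal{ATEH}$ with an induced peripheral structure; if it is virtually free, virtually abelian, or virtually Fuchsian it is declared terminal, and otherwise it is decomposed one further level, first over finite subgroups and then over a virtually cyclic subgroup. (Note that a higher‑rank virtually abelian group need not admit any virtually cyclic splitting, which is why such groups are retained as a permitted terminal type rather than decomposed further.)

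First I would reduce to the freely indecomposable case. Since $G$ is hyperbolic relative to finitely presented (virtually abelian) subgroups, $G$ is finitely presented, so by Stallings' theorem together with Dunwoody accessibility, applied relative to $\Per$, the group $G$ is a finite free product of a free group with finitely many factors $G_1,\dots,G_k$, each freely indecomposable relative to its induced peripheral structure. The edge groups of this free decomposition are finite, hence virtually cyclic, and free and finite groups are virtually free, so this layer is of the permitted form. Each $G_i$ is a free factor of $G$, hence relatively quasiconvex, hence relatively hyperbolic with respect to the induced peripheral structure and again a member of $\mathcal{ATEH}$, by the inheritance of the elementary hierarchy by relatively quasiconvex subgroups established in this paper.

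Now fix a freely indecomposable $G_i\in\mathcal{ATEH}$. If $G_i$ is virtually free, virtually abelian, or virtually Fuchsian it is terminal and we stop; otherwise the analogue of \cite{Sela1}*{Theorem 3.2} provides a nontrivial splitting of $G_i$ over a virtually cyclic subgroup in which every peripheral subgroup is elliptic. The edge groups of this splitting are virtually cyclic, hence elementary and relatively quasiconvex, so by the standard theory of quasiconvexity and splittings of relatively hyperbolic groups (Bigdely--Wise, Dahmani) the vertex groups are relatively quasiconvex; they are finitely generated since $G_i$ is locally relatively quasiconvex, and — again being relatively quasiconvex — they lie in $\mathcal{ATEH}$ with their induced peripheral structure. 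One then applies the recursion to each vertex group: decompose over finite subgroups to reach freely indecomposable pieces, test whether each such piece is terminal, and if not split it again over a virtually cyclic subgroup.

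The main obstacle is to show that this recursion terminates, so that the resulting tree of decompositions is finite; equivalently, that any process of alternately splitting over finite subgroups and over virtually cyclic subgroups must halt after finitely many steps. For groups in $\mathcal{ATEH}$ this strong accessibility can be obtained either from strong accessibility for (virtually) toral relatively hyperbolic groups in the sense of Delzant--Potyagailo and Louder--Touikan — using that $G$ is virtually toral relatively hyperbolic by Theorem~\ref{ATEHisVTEH} — or by dominating the recursion by the given finite elementary hierarchy of $G$. Once termination is established, the recursion outputs a finite hierarchy in which every edge group is virtually cyclic and every terminal group is virtually free, virtually abelian, or virtually Fuchsian, which is exactly the asserted conclusion.
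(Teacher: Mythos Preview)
Your outline follows the same broad strategy as the paper --- produce virtually cyclic splittings via Lemma~\ref{SplitsCyclically} and invoke Louder--Touikan for termination --- but there is a genuine gap at the inductive step. You assert that vertex groups of an arbitrary virtually cyclic splitting lie again in $\mathcal{ATEH}$, citing ``inheritance of the elementary hierarchy by relatively quasiconvex subgroups established in this paper.'' No such result is established here: Theorem~\ref{lrqc} shows such vertex groups are relatively quasiconvex, hence relatively hyperbolic with virtually abelian peripherals, but nowhere is it shown that an arbitrary relatively quasiconvex subgroup inherits a finite elementary hierarchy with the required terminal types. The argument of Theorem~\ref{MakeGood} that vertex groups inherit a shorter hierarchy works only because rigid vertex groups of a \emph{JSJ} act elliptically on every other elementary tree; a vertex group of an arbitrary virtually cyclic splitting need not. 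Without knowing the vertex groups are in $\mathcal{ATEH}$, you cannot re-apply Lemma~\ref{SplitsCyclically}, so your recursion is not even well-defined past the first level.

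This is precisely why the paper does not take an arbitrary splitting at each stage. Instead it constructs a canonical hierarchy --- alternate the Bowditch splitting over finite groups with the \emph{collapsed JSJ}, obtained from the Haulmark--Hruska JSJ by collapsing the $1$--ended edges --- and proves (Lemma~\ref{IsJSJ}) that the collapsed JSJ is itself a JSJ over virtually cyclic subgroups relative to $\Per$. Vertex groups of the collapsed JSJ are amalgams of rigid and peripheral pieces of the Haulmark--Hruska JSJ, so they visibly lie in $\mathcal{ATEH}$, and --- crucially --- because the entire hierarchy is read off the topology of the Bowditch boundary, one can pass to a finite index torsion-free subgroup (Lemma~\ref{GisVTF}) to remove $2$--torsion, apply \cite{LTStrongAccess}*{Theorem 2.5} there, and transfer finiteness back to $G$ via the argument of Theorem~\ref{MakeGood}. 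Your non-canonical recursion has no such boundary description, so neither ``passing to virtually toral'' nor ``dominating by the given elementary hierarchy'' can be made precise without essentially reproducing the JSJ argument.
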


This paper is organized as follows. Section \ref{SectionPrelims} consists of background on relatively hyperbolic groups, distortion, and LERFness and residual finiteness. Section \ref{SectionSplittings} reviews splittings, JSJ decompositions, and previously known results relating the boundary of a relatively hyperbolic group and splittings of that group. Section \ref{SectionQIResults} summarizes known quasi-isometry results needed, and proves Lemma \ref{BulkLemma}, which is used for inductive proofs through the remainder of the paper. Section \ref{SectionHierarchies} recalls the definition of a hierarchy, and introduces the class $\mathcal{ATEH}$. In Section \ref{SectionBHHJSJ}, the BHH--JSJ is defined, and used to show that the class $\mathcal{ATEH}$ is closed under quasi-isometry. In Section \ref{SectionSeparability}, groups in $\mathcal{ATEH}$ are shown to be locally relatively quasi-convex, LERF, virtually torsion-free, and virtually in $\mathcal{TEH}$. Lastly, in Section \ref{SectionCyclicHierarchies}, it is shown that groups in $\mathcal{ATEH}$ not only have virtually abelian hierarchies, but also have finite virtually cyclic hierarchies as well.

\subsection*{Acknowledgments} 
\vspace{-.75pt} The author would like to thank his advisor, Daniel Groves, for suggesting this project and guidance toward its completion, as well as Benson Farb for helpful comments about references.

\section{Preliminaries}\label{SectionPrelims}
We refer the reader to \cite{BHMetricSpaces} for a background and definitions of $\delta$-hyperbolic metric spaces and quasi-isometries.

We briefly recall the definition of a relatively hyperbolic group given in \cite{GrovesManningFillings}. If $\Gamma$ is a graph, we will always assume that $V\lp \Gamma\rp$ is the set of vertices and $E\lp \Gamma\rp$ is the set of edges. Additionally, we assume each edge of $\Gamma$ has length $1$, and give $\Gamma$ the induced path metric, which we denote $d_\Gamma$.

\begin{definition}[Combinatorial Horoball]
Let $\Gamma$ be a connected graph. The \emph{combinatorial horoball} over $\Gamma$ is a graph $\hat{\Gamma}$ with $V\lp \hat{\Gamma}\rp=V\lp \Gamma\rp\times \mathbb Z_{\geq 0}$. There are two types of edges, given by \begin{itemize}
    \item an edge between $\lp g,i\rp$ and $\lp g,i+1\rp$ for all $g\in V\lp \Gamma\rp$ and $i\in \mathbb Z_{\geq 0}$, and
    \item   an edge between $\lp g,i\rp$ and $\lp h,i\rp$ if $0<d_{\Gamma}\lp g,h\rp\leq 2^i$.
\end{itemize}
\end{definition}  

We note that $\hat{\Gamma}$ contains an isomorphic copy of $\Gamma$ as the full subgraph on $V\lp\Gamma\rp\times\{0\}$.

Suppose $G$ is a finitely generated group, $\Per=\{P_1,...,P_n\}$ a finite set of infinite finitely generated subgroups. We say that $\mathcal S$ is a \emph{compatible generating set} if $\mathcal S$ is a generating set of $G$ so that $\mathcal S\cap P_i$ generates $P_i$ for each $i$.

\begin{definition}[Cusped Cayley graph]
Let $G$ and $\Per$ be as above. The \emph{cusped Cayley graph} $X\lp G,\Per,\mathcal S\rp$ is a quotient of the disjoint union of the Cayley graph of $G$ with respect to a compatible generating set $\mathcal S$ and for each $i$ a combinatorial horoball over $P_i$ for every coset of $P_i$. The horoballs are attached to the Cayley graph so that every vertex $gp\in gP_i$ in the Cayley graph is identified with $\lp p,0\rp$ in the combinatorial horoball over $P_i$ associated with $gP_i$. 
\end{definition} 
For details of this construction see \cite{GrovesManningFillings}*{Section 3}.

\begin{definition}
With $G$ and $\Per$ as above, $\lp G,\Per\rp$ is \emph{relatively hyperbolic} if $X\lp G,\Per,\mathcal S\rp$ is $\delta$-hyperbolic for some $\delta$ and some compatible generating set $\mathcal S$.
\end{definition}
 
  It is shown in \cite{GrovesManningFillings}*{Theorem 3.25} that $\lp G,\Per\rp$ is relatively hyperbolic in the sense of Gromov \cite{GromovHG} if and only if $X\lp G,\Per,\mathcal S\rp$ is hyperbolic for some compatible generating set $\mathcal S$,  establishing that the definition above does not depend on the choice of $\mathcal S$. We note that by \cite{GroffBoundaries}*{Theorem 6.3}, the cusped Cayley graphs with two different compatible generating sets are quasi-isometric.
 
 The boundary of a hyperbolic space is discussed in detail in \cite{BHMetricSpaces}*{Section III.H.3}, but we recall the definition here. Recall a \emph{quasi-geodesic ray} in a metric space $X$ is a quasi-isometric embedding of $\lb 0,\infty\rp$ into $X$.
 
 \begin{definition}
 Let $X$ be a hyperbolic metric space. The boundary of $X$, denoted $\partial X$, is defined to be the set of equivalence classes of infinite quasi-geodesic rays, where two such rays are in the same equivalence class if the Hausdorff distance between their images is finite.
 \end{definition}

If $\lp G,\Per\rp$ is relatively hyperbolic, the space $X\lp G,\Per,\mathcal S\rp$ is not dependant on the  choice of $\mathcal S$ up to quasi-isometry. Therefore, if $\mathcal S$ and $\mathcal S'$ are two compatible generating sets, $\partial X\lp G,\Per,\mathcal S\rp$ and $\partial X\lp G,\Per,\mathcal S'\rp$ are homeomorphic by \cite{BHMetricSpaces}*{Theorem III.H.3.9}. Additionally, \cite{YamanTop}*{Theorem 0.1} guarantees the boundary of $X\lp G,\Per,\mathcal S\rp$ is equivariantly homeomorphic to the boundary $\partial\lp G,\Per\rp$ as introduced in \cite{BowditchRH}.

If $\lp G,\Per\rp$ is relatively hyperbolic with generating set $S$, the action of $G$ on $G$ by left multiplication takes cosets of $P\in \Per$ to cosets of $P$, so the action extends to the space $X\lp G,\Per, S\rp$, and therefore to the boundary $\partial\lp G,\Per\rp$. A subgroup $H$ of $G$ is said to be \emph{elementary} if $H$ fixes a point $p\in \partial\lp G,\Per\rp$. If $H$ is elementary either $H$ is $2$--ended, or $H$ is conjugate into some $P\in \Per$. 

Most other general facts about relatively hyperbolic groups used in this paper can be found in \cite{OsinRH}.

Next, we recall the notion of distortion. For more details see \cite{GGTDrutuKapovich}*{Section 8.9} or \cite{HruskaDistortion}. For functions $f,g:\lb 0,\infty\rp\rightarrow \lb 0,\infty\rp$, say $f\preceq g$ if there exists a $C$ so that for all $r\geq 0,$ $f\lp r\rp\leq Cg\lp Cr+C\rp+Cr+C$ . Write $f\cong g$ if $f\preceq g$ and $g\preceq f$. Note that $\cong$ is an equivalence relation. If $f\lp n\rp=n$ and $g\lp n\rp=0$ then $f\cong g$. 

\begin{definition}[Distortion]
Suppose $G$ is a finitely generated group with generating set $\mathcal S$ and $H\leq G$ is finitely generated with generating set $\mathcal T$. The \emph{distortion} of $\lp H,\mathcal T\rp$ in $\lp G,\mathcal S\rp$ is  $$\Delta_H^G\lp n\rp:=\max\{|h|_\mathcal{T}: h\in H \textrm{ and } |h|_\mathcal{S}\leq n\}.$$
\end{definition} 

\begin{definition}[Undistorted]
A subgroup is \emph{undistorted} if and only if $\Delta_H^G\cong\lp n\mapsto n\rp$.
\end{definition}

As noted in \cite{GGTDrutuKapovich}*{Section 8.9} a subgroup is undistorted if and only if the inclusion map is a quasi-isometric embedding. The following lemma is easy from this fact and \cite{GGTDrutuKapovich}*{Proposition 8.98 (6)}.

\begin{lemma}
All subgroups of a finitely generated virtually abelian group are undistorted.
\end{lemma}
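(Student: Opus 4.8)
The plan is to peel off finite-index subgroups twice, reducing the statement to the single geometric fact that subgroups of $\mathbb{Z}^n$ are undistorted, and then to prove that last fact by an elementary linear-algebra and compactness argument.

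First I would record the structural input. A finitely generated virtually abelian group $G$ has a finite-index abelian subgroup, which is finitely generated (being finite index in a finitely generated group) and hence of the form $\mathbb{Z}^n\times F$ with $F$ finite; therefore $G$ has a finite-index subgroup $N$ isomorphic to $\mathbb{Z}^n$. Now let $H\leq G$ be any subgroup. Then $H\cap N$ has finite index in $H$, and $H$ is itself finitely generated (it contains $H\cap N$ with finite index, and $H\cap N$ is isomorphic to a subgroup of $\mathbb{Z}^n$). Using that finite-index subgroups are undistorted, together with the behaviour of distortion under such inclusions (\cite{GGTDrutuKapovich}*{Proposition 8.98 (6)}), the distortion $\Delta_H^G$ is equivalent to $\Delta_{H\cap N}^N$. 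Hence it suffices to treat the case $G=\mathbb{Z}^n$, $H\leq\mathbb{Z}^n$.

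In that case $H$ is free abelian of some rank $m\leq n$, so a choice of basis of $H$ exhibits the inclusion $H\hookrightarrow\mathbb{Z}^n$ as the restriction of an injective linear map $\varphi\colon\mathbb{R}^m\to\mathbb{R}^n$ taking $\mathbb{Z}^m$ onto $H$. Since the word metric on $\mathbb{Z}^k$ with respect to the standard generators is bi-Lipschitz to the restriction of the $\ell^1$-norm on $\mathbb{R}^k$, it is enough to see that $\varphi$ is bi-Lipschitz onto its image: the upper bound is the operator norm of $\varphi$, and for the lower bound the map $x\mapsto\|\varphi(x)\|$ is continuous and, by injectivity, strictly positive on the compact unit sphere of $\mathbb{R}^m$, hence bounded below by some $c>0$, so that $\|\varphi(x)\|\geq c\|x\|$ for all $x$. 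Thus $\varphi|_{\mathbb{Z}^m}$ is a quasi-isometric embedding, i.e. $\mathbb{Z}^m$ is undistorted in $\mathbb{Z}^n$, which by the reduction above gives the lemma.

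There is no real obstacle here — the paper rightly calls it easy — but the step needing the most attention is the first reduction: one must pass to a genuinely \emph{free} abelian finite-index subgroup $N\cong\mathbb{Z}^n$, not merely an abelian one, and then correctly chain the two finite-index reductions $H\cap N\leq H$ and $N\leq G$, using that finite-index inclusions are quasi-isometries and that quasi-isometric embeddings compose.
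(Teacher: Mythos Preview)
Your argument is correct and is essentially an explicit unpacking of the paper's one-line justification: the paper merely asserts the lemma is ``easy'' from the quasi-isometric-embedding characterisation of undistortedness together with \cite{GGTDrutuKapovich}*{Proposition 8.98 (6)}, and your two-step reduction (pass to a finite-index $\mathbb{Z}^n$, then handle subgroups of $\mathbb{Z}^n$ by a linear-algebra/compactness argument) is precisely the intended content of that remark. Nothing further is needed.
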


The following lemma involves the notion of relatively quasi-convex subgroups of a relatively hyperbolic group $\lp G,\Per\rp$. There are multiple equivalent definitions of relatively quasi-convex subgroups. We refer the reader to \cite{HruskaDistortion} for the various definitions as well as the proofs of their equivalence. For this paper, we take Definition (QC-1) from \cite{HruskaDistortion}. A subgroup $H\subseteq G$ is \emph{relatively quasi-convex} if the induced action of $H$ on the limit set of $H$, $\Lambda H$, is geometrically finite. Note that if $H$ is relatively quasi-convex, then by the work of Yaman \cite{YamanTop}, $H$ is relatively hyperbolic, and the limit set $\Lambda H$ is equivariantly homeomorphic to $\partial \lp H, \mathcal{O}\rp$ where $\mathcal O$ is the induced peripheral structure. 
 Immediately from the preceding lemma and from \cite{HruskaDistortion}*{Theorem 10.5}, we get the following lemma.

\begin{lemma}\label{HruskaDistorted}
Suppose that $\lp G,\Per\rp$ is relatively hyperbolic and every $P\in \Per$ is virtually abelian. Then every relatively quasi-convex subgroup of $\lp G,\Per\rp$ is quasi-isometrically embedded in $G$.
\end{lemma}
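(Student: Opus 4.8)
The plan is to show this is a direct consequence of the preceding lemma once the structure of a relatively quasi-convex subgroup is unwound. Let $H\leq\lp G,\Per\rp$ be relatively quasi-convex. As recalled above, by \cite{YamanTop} the subgroup $H$ is itself relatively hyperbolic, with an induced peripheral structure $\mathcal O$ in which, up to conjugacy in $G$, each element has the form $H\cap P^{g}$ for some $P\in\Per$ and $g\in G$. Since each $P\in\Per$ is finitely generated virtually abelian, so is each conjugate $P^{g}$, hence every subgroup $H\cap P^{g}$ is again finitely generated virtually abelian; since $H$ is generated by a finite set together with its peripheral subgroups, $H$ itself is finitely generated and $\Delta_H^G$ is defined.

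Next I would feed the peripheral subgroups of $H$ into the preceding lemma: each $O\in\mathcal O$ is a subgroup of the finitely generated virtually abelian group $P^{g}$, so $O$ is undistorted in $P^{g}$, and since distortion is unchanged under conjugation the corresponding subgroup $O^{g^{-1}}\leq P$ is undistorted in $P$. Thus every peripheral subgroup of $H$ is undistorted in the corresponding peripheral subgroup of $G$.

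Finally I would apply \cite{HruskaDistortion}*{Theorem 10.5}, which controls the distortion of a relatively quasi-convex subgroup in terms of that of its peripheral subgroups: since each of those distortions is linear by the previous step, we obtain $\Delta_H^G\cong\lp n\mapsto n\rp$, i.e.\ $H$ is undistorted in $G$. As noted after the definition of ``undistorted'' above, this is precisely the assertion that the inclusion $H\hookrightarrow G$ is a quasi-isometric embedding.

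The statement really is a corollary, so I do not expect a genuine mathematical obstacle; the only care needed is in lining up conventions with \cite{HruskaDistortion}. One should check that the peripheral subgroups of $H$ in the sense of Theorem 10.5 are exactly the intersections $H\cap P^{g}$, and that the distortion comparison there is against the peripheral subgroups of $G$ rather than against $G$ itself — if it is the latter, one additionally invokes the standard fact that the $P\in\Per$ are undistorted in $G$ — and also that the definition of relative quasi-convexity in force really does force $H$ to be finitely generated in this setting.
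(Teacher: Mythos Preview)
Your proposal is correct and is exactly the approach the paper takes: the paper states that the lemma is immediate from the preceding lemma (subgroups of finitely generated virtually abelian groups are undistorted) together with \cite{HruskaDistortion}*{Theorem 10.5}. Your write-up simply unpacks this two-line deduction, including the conventions check at the end, and there is nothing to add.
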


The following lemma is well known and immediate from work of Osin \cite{OsinRH}. In this paper it is used to remove $2$--ended groups from the peripheral structure induced on vertex groups in an elementary splitting.

\begin{lemma}\label{ThrowOut2ended}
Suppose that $\lp G,\Per\rp$ is relatively hyperbolic, and that $\{P_i\}_{i=1}^n\subseteq \Per$ are hyperbolic. If $\Per'=\Per\setminus\{P_i\}_{i=1}^n$, then $\lp G,\Per'\rp$ is relatively hyperbolic.
\end{lemma}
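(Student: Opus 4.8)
The plan is to deduce this from the transitivity of relative hyperbolicity, which is part of Osin's work \cite{OsinRH}. First recall that a word-hyperbolic group is the same thing as a group which is hyperbolic relative to the empty peripheral collection: with no horoballs to attach, the cusped Cayley graph of $\lp H,\emptyset\rp$ is just the ordinary Cayley graph of $H$, so the two notions literally coincide under the definition given above. In particular, each of the hyperbolic subgroups $P_i$ in the statement is hyperbolic relative to $\emptyset$, while each remaining $P\in\Per'$ is trivially hyperbolic relative to $\{P\}$.

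Next I would invoke Osin's transitivity theorem \cite{OsinRH}*{Theorem 2.40}: if $\lp G,\Per\rp$ is relatively hyperbolic and every $P\in\Per$ is itself hyperbolic relative to a (possibly empty) collection $\mathcal O_P$ of finitely generated subgroups, then $G$ is hyperbolic relative to $\bigcup_{P\in\Per}\mathcal O_P$. Applying this with $\mathcal O_{P_i}=\emptyset$ for $i=1,\dots,n$ and $\mathcal O_{P}=\{P\}$ for $P\in\Per'$ shows that $G$ is hyperbolic relative to $\bigcup_{P\in\Per'}\{P\}=\Per'$, which is exactly the desired conclusion. One could instead delete the $P_i$ one at a time, applying the $n=1$ instance of this argument $n$ times; this is legitimate because the hyperbolicity of each remaining $P_j$ as an abstract group is unaffected by shrinking the peripheral structure. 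It remains only to note that $\Per'$ is an admissible peripheral structure in the sense of this paper, namely a finite set of infinite finitely generated subgroups, and this is automatic since $\Per'\subseteq\Per$.

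There is no genuine obstacle here: the entire content is Osin's combination result, and the lemma follows once it is quoted in the right form. The only point requiring a little care is the convention that hyperbolicity relative to the empty collection means word-hyperbolicity, together with the fact that this degenerate case is permitted in the hypotheses of the transitivity theorem; both are standard, and in the Groves--Manning framework used here the first holds by definition.
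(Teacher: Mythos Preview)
Your proposal is correct and follows essentially the same route as the paper: both invoke \cite{OsinRH}*{Theorem 2.40}, and your ``transitivity'' formulation (each $P_i$ hyperbolic relative to $\emptyset$, each $P\in\Per'$ hyperbolic relative to $\{P\}$) is just a repackaging of the paper's observation that hyperbolic groups have linear ordinary Dehn functions while relatively hyperbolic groups have linear relative Dehn functions, which is precisely the hypothesis Osin's theorem is stated under. The only difference is exposition: you unpack the application in the language of peripheral structures, whereas the paper quotes the Dehn-function formulation directly.
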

\begin{proof}
This follows from \cite{OsinRH}*{Theorem 2.40}, as relatively hyperbolic groups have linear relative Dehn functions and hyperbolic groups have linear (ordinary) Dehn functions.
\end{proof}

In Section \ref{SectionSeparability}, we show that elements of the class of groups being studied are LERF. A subgroup $H$ of $G$ is said to be \emph{separable} if it is closed in the profinite topology. This is equivalent to $H$ being the intersection of finite index subgroups of $G$, or having the property that for any $g\in G\setminus H$, there is a finite group $Q$ and a homomorphism $\phi:G\rightarrow Q$ so that $\phi\lp g\rp\not\in\phi\lp H\rp$. A group $G$ is \emph{LERF} (or \emph{subgroup separable}) if any finitely generated subgroup of $G$ is separable. Classes of groups which are LERF include free groups (as a consequence of Hall's Theorem \cite{HallsTheorem}), surface groups \cite{ScottsSurfaceLerf}, and limit groups \cite{WiltonHallsTheoremLimitGroups}. We refer the reader to \cite{WiltonHallsTheoremLimitGroups} for more background and additional references. 

A group $G$ is said to be \emph{residually finite} if for each $1\neq g\in G$, there is a finite index normal subgroup $N$ of $G$ such that $g\not\in N$. We note that if $G$ is LERF, then $G$ is residually finite, as $\{1\}$ is a finitely generated subgroup of $G$, so it is the intersection of finite index subgroups of $G$. In particular, there is some finite index normal subgroup missing any non-identity element of $G$.

\section{Splittings}\label{SectionSplittings}

By a \emph{splitting} of a group, we mean a realization of $G$ as the fundamental group of a finite graph of groups. A splitting is said to be \emph{over} a class $\mathcal{E}$ if every edge group in the splitting is an element of $\mathcal{E}$. A splitting of a group $G$ is said to be \emph{relative to $\Per$} if every $P\in\Per$ fixes a vertex of the Bass-Serre tree. A splitting of a group $G$ is said to be \emph{trivial} if $G$ is one of the vertex groups. 

Next we recall the notion of a JSJ decomposition. The general definition which will be used in this paper appears in \cite{GLJSJs}. An $\mathcal{E}$-tree is the Bass-Serre tree corresponding to a splitting of a group $G$ over the class of subgroups $\mathcal E$. An $\lp \mathcal{E},\Per\rp$-tree is an $\mathcal{E}$-tree so that all groups in $\Per$ act elliptically on the tree. 

\begin{definition}[Universally Elliptic]
An $\lp \mathcal{E},\Per\rp$-tree $T$ is \emph{universally elliptic} if every edge stabilizer $G_e$ acts elliptically on any other $\lp \mathcal{E},\Per\rp$ tree.
\end{definition}

Given two trees $T$ and $T'$ that $G$ acts on, we say that $T$ \emph{dominates} $T'$ if there is a $G$-equivariant map $f:T\rightarrow T'$.

\begin{definition}[JSJ tree]
An $\lp \mathcal{E},\Per\rp$-tree $T$ is a \emph{JSJ-tree} if $T$ is universally elliptic and dominates any other universally elliptic $\lp \mathcal{E},\Per\rp$-tree.
\end{definition}

The next two theorems are due to Bowditch \cite{BowditchRH}. The first relates splittings over finite groups relative to the peripheral structure to connectedness of the boundary, the second is an accessibility result. 

\begin{theorem}[\label{BowditchConnectedness}{\cite{BowditchRH}*{Theorem 10.1}}] The boundary $\partial\lp \Gamma,\Per\rp$ of a relatively hyperbolic group $\lp \Gamma,\Per\rp$ is connected if and only if $\Gamma$ does not non-trivially split over finite subgroups relative to $\Per$.
\end{theorem}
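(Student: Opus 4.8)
The plan is to prove the two implications separately, using the identification (recorded above) of $\partial\lp\Gamma,\Per\rp$ with the Gromov boundary of the cusped Cayley graph $X=X\lp\Gamma,\Per,\mathcal S\rp$, or equivalently with the boundary of a connected fine hyperbolic graph $K$ on which $\Gamma$ acts with finite edge stabilizers, finitely many orbits of edges, and with the members of $\Per$ as the infinite vertex stabilizers. Since $X$ is a locally finite, hence proper, hyperbolic graph, $\partial\lp\Gamma,\Per\rp$ is compact metrizable, so disconnectedness means it admits a partition into two nonempty closed subsets.

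For the implication that a splitting forces disconnectedness, suppose $\Gamma$ splits nontrivially over a finite subgroup relative to $\Per$, and let $T$ be the Bass--Serre tree, subdivided so that every edge stabilizer is finite and every $P\in\Per$ is elliptic. I would use a coarsely $\Gamma$-equivariant coarse map $X\to T$ --- sending each vertex of the Cayley-graph part of $X$ into a bounded neighbourhood of the vertex of $T$ whose stabilizer's coset contains it, and collapsing each horoball over a coset $gP$ onto the $gPg^{-1}$-fixed vertex of $T$ --- to organize the boundary around $T$. Fixing an edge $e$ with half-trees $T^{\pm}_{e}$ and letting $L^{\pm}$ be the closure in $\partial\lp\Gamma,\Per\rp$ of the union of the limit sets of the vertex stabilizers in $T^{\pm}_{e}$, together with the boundary points that ``run out'' the corresponding side of $T$, one checks that $L^{+}\cup L^{-}=\partial\lp\Gamma,\Per\rp$ and $L^{+}\cap L^{-}\subseteq\Lambda\,\mathrm{Stab}\lp e\rp=\varnothing$, the last equality because a finite subgroup has empty limit set. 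Each $L^{\pm}$ is nonempty because the splitting is nontrivial relative to $\Per$ (in the degenerate case of a finite vertex group one uses instead that $T$ has ends accumulating on both sides of $e$). Hence $\partial\lp\Gamma,\Per\rp$ is disconnected.

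For the converse, suppose $\partial\lp\Gamma,\Per\rp=\partial K$ is disconnected, and fix a proper nonempty clopen $U\subseteq\partial K$. I would pass to the ``trace'' of $U$ on $K$: a subset of $V\lp K\rp$ whose edge-coboundary is \emph{finite} --- this finiteness is exactly what disconnectedness of the boundary buys, via hyperbolicity and fineness of $K$ --- and which is almost invariant under $\Gamma$. Feeding this almost-invariant set with finite coboundary into Dunwoody's theory of tracks, or equivalently into the pretree-to-tree machinery, produces a nontrivial $\Gamma$-tree with finitely many orbits of edges; its edge stabilizers are finite because the coboundary is finite. Finally, each $P\in\Per$ acts elliptically: being a single infinite vertex stabilizer of $K$, its limit set is connected and cannot be split by a set of finite coboundary, so $P$ fixes a vertex. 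This is the desired nontrivial splitting of $\Gamma$ over a finite subgroup relative to $\Per$.

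The main obstacle is this converse direction: turning the purely topological input ``$\partial\lp\Gamma,\Per\rp$ is disconnected'' into an honest graph-of-groups decomposition, a relative and fine analogue of Stallings' ends theorem. The two delicate points are (i) arranging that the $\Gamma$-almost-invariant set extracted from the clopen partition has \emph{finite} coboundary, so that the resulting edge groups are finite, and (ii) verifying that the construction keeps every peripheral subgroup elliptic, i.e.\ that the splitting is genuinely relative to $\Per$. The first implication, by contrast, is comparatively soft once one has the equivariant collapse map $X\to T$ and the remark that finite subgroups have empty limit set.
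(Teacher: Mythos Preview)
The paper does not prove this theorem at all: it is stated as a citation of \cite{BowditchRH}*{Theorem 10.1} and used as a black box, with no argument supplied. So there is no ``paper's own proof'' to compare against.

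Your sketch is a plausible outline of the standard argument in Bowditch's setting. The forward direction (a nontrivial relative splitting over a finite group yields a disconnected boundary) is essentially correct as you describe it. For the converse you correctly identify the two genuine difficulties --- producing an almost-invariant set with finite coboundary from a clopen partition of the boundary, and checking that the resulting splitting is relative to $\Per$ --- but you have only named them, not resolved them. In Bowditch's treatment these are handled via the fine-graph model and a careful analysis of how components of the boundary correspond to ends of the fine graph minus a finite set of edges; your appeal to ``Dunwoody's theory of tracks'' or ``pretree-to-tree machinery'' is the right flavour but would need substantial detail to stand on its own. In particular, your claim that a peripheral $P$ has connected limit set (a single point, in fact) and therefore must be elliptic is correct, but the assertion that the almost-invariant set extracted from the clopen $U$ has finite coboundary in $K$ is precisely the crux, and ``this finiteness is exactly what disconnectedness of the boundary buys, via hyperbolicity and fineness of $K$'' is an assertion, not an argument.
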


\begin{theorem}
[\label{BowditchAccess}\cite{BowditchRH}*{Theorem 10.2}] Any relatively hyperbolic group $\lp \Gamma,\Per\rp$ can be expressed as the fundamental group of a finite graph of groups with finite edge groups, where the splitting is relative to $\Per$, with the property that no vertex group splits non-trivially over any finite subgroup relative to the peripheral subgroups.
\end{theorem}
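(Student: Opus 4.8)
The plan is to build the desired graph of groups by \emph{iterated splitting}, using Theorem~\ref{BowditchConnectedness} both to produce the splittings and to recognize when to stop, and to control the iteration by an accessibility argument.

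First, the base case and the recursive step. Given $\lp \Gamma, \Per\rp$, apply Theorem~\ref{BowditchConnectedness}: if $\partial\lp \Gamma,\Per\rp$ is connected, then $\Gamma$ admits no non-trivial splitting over a finite subgroup relative to $\Per$, and the trivial one-vertex graph of groups is already the decomposition we want. If $\partial\lp \Gamma,\Per\rp$ is disconnected, Theorem~\ref{BowditchConnectedness} gives a non-trivial splitting of $\Gamma$ over a finite subgroup relative to $\Per$; fix such a splitting with a single edge, so $\Gamma = A *_C B$ or $\Gamma = A*_C$ with $C$ finite. Since the splitting is relative to $\Per$, every $P\in\Per$ fixes a vertex of the Bass--Serre tree, hence a conjugate of $P$ lies entirely inside a vertex group. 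For each vertex group $V$, let $\mathcal O_V$ be the induced peripheral structure, namely a set of conjugacy representatives in $V$ of the $P\in\Per$ that are conjugate into $V$; note that no finite induced peripheral arises, since each $P$ is infinite and, being elliptic, has a conjugate contained in $V$.

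Second, I need that each $\lp V,\mathcal O_V\rp$ is again a relatively hyperbolic group to which the same argument applies. Finite generation of $V$ follows from Bass--Serre theory for a one-edge splitting of a finitely generated group over a finitely generated (here finite) edge group. That $\lp V,\mathcal O_V\rp$ is relatively hyperbolic is the standard fact that vertex groups of splittings of relatively hyperbolic groups over finite subgroups, relative to the peripheral structure, are again relatively hyperbolic relative to the induced peripheral structure (one can see this from the quasiconvexity of $V$ inside the cusped space $X\lp \Gamma,\Per,\mathcal S\rp$, using that the fibre over an edge is a combinatorial line and $C$ is finite). Having checked this, recurse: replace $\lp \Gamma,\Per\rp$ by each $\lp V,\mathcal O_V\rp$ in turn, always working with \emph{reduced} graphs of groups so that no artificial folding or absorption moves are available. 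When the recursion stops, every surviving vertex group has connected boundary relative to its induced peripheral structure, which by Theorem~\ref{BowditchConnectedness} says exactly that it does not split non-trivially over a finite subgroup relative to the peripheral subgroups; assembling the one-edge splittings then produces the required finite graph of groups.

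The heart of the matter, and the main obstacle, is termination, since for general finitely generated groups iterated splitting over finite subgroups need not stop (Dunwoody's inaccessible examples). Two routes are available. If the peripheral subgroups are finitely presented, then $\Gamma$ is finitely presented, and Dunwoody's accessibility theorem bounds the number of edges in any reduced decomposition of $\Gamma$ over finite subgroups; our relative splittings form a sub-family, so the process stops. To cover the finitely-generated-but-not-finitely-presented peripheral case (and to follow Bowditch's original argument), one instead attaches a non-negative integer complexity to the pair $\lp \Gamma,\Per\rp$ --- built from the geometry of the cusped space $X\lp \Gamma,\Per,\mathcal S\rp$ via a maximal system of disjoint tracks/patches carrying the finite splittings --- and shows it strictly drops when one passes from $\Gamma$ to the vertex groups of a reduced splitting over a finite subgroup relative to $\Per$; integrality then forces termination. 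The recursive bookkeeping (inheritance of relative hyperbolicity and finite generation, the correct notion of ``reduced'') is routine; the delicate point is making the complexity argument work uniformly without a finite-presentation hypothesis, which is precisely where the boundary/fineness techniques underlying Theorem~\ref{BowditchConnectedness} do the work that a naive appeal to Dunwoody cannot.
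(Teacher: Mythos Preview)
The paper does not give its own proof of this statement: Theorem~\ref{BowditchAccess} is simply cited as \cite{BowditchRH}*{Theorem 10.2} and used as a black box. So there is no in-paper argument to compare your proposal against.

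That said, your outline is a fair summary of the strategy behind Bowditch's original proof: iterate finite splittings relative to $\Per$, use Theorem~\ref{BowditchConnectedness} to detect when a vertex group is terminal, and invoke an accessibility bound for termination. You correctly isolate termination as the real content and correctly note that Dunwoody's theorem only handles the finitely presented case. The gap in your write-up is that the second route you sketch---the ``non-negative integer complexity'' built from tracks in the cusped space---is described only at the level of a slogan. Bowditch's actual argument uses fineness of the Farb coned-off graph and a pattern-style complexity specific to that setting; your description (``maximal system of disjoint tracks/patches carrying the finite splittings'') is suggestive but does not pin down the invariant or verify the strict decrease. If you intend to supply a self-contained proof rather than a citation, this step needs to be made precise; as written it is a pointer to an idea, not a proof. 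For the purposes of this paper, a citation to \cite{BowditchRH} is all that is required.
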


In light of Theorem \ref{BowditchConnectedness}, we say that a relatively hyperbolic group $\lp G,\Per\rp$ is \emph{relatively $1$--ended} if $G$ does not split over a finite group relative to $\Per$.

In order to apply the work of Haulmark and Hruska \cite{HHCanonSplittings}, we need the boundary of a relatively hyperbolic group to be locally connected. The next theorem, due to Dasgupta and Hruska, is a generalization of a result of Bowditch \cite{BowditchPeriph}*{Theorem 1.5}. The result of Bowditch would have been sufficient in our case to apply the theorems of Haulmark and Hruska that are summarized below as Theorem \ref{HHJSJ}, but the more general version is stated here.  

\begin{theorem}[\label{ConnectedGivesLocallyConnected}{\cite{DasgHruskLocallyConnected}*{Theorem 1.1}}] If $\lp G,\Per\rp$ is a relatively hyperbolic group with connected boundary $M=\partial\lp G,\Per\rp$, then $M$ is locally connected.
\end{theorem}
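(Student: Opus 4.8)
The plan is to follow Bowditch's approach to local connectedness of Bowditch boundaries (\cite{BowditchPeriph}) via peripheral splittings, carrying every step out in the \emph{relative} setting so that no finiteness hypothesis on the members of $\Per$ is required. Write $M=\partial\lp G,\Per\rp$; the action of $G$ on $M$ is a geometrically finite convergence action, and since $M$ is connected, $M$ is a non-degenerate metrizable continuum. By Theorem \ref{BowditchConnectedness}, connectedness of $M$ says precisely that $G$ does not split over a finite subgroup relative to $\Per$.

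I would first invoke two purely topological--dynamical facts of Bowditch that make no reference to the peripheral subgroups: (i) a connected continuum carrying a geometrically finite convergence action and having no global cut point is locally connected; and (ii) if such a continuum \emph{does} have a global cut point, then $G$ acts non-trivially on an $\mathbb R$-tree whose arc stabilizers fix pairs of points of $M$ (so are two-ended or parabolic) and on which every $P\in\Per$ acts elliptically. By (i) it is enough to rule out a global cut point, so assume one exists and let $T$ be the $\mathbb R$-tree supplied by (ii).

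Feeding $T$ into a \emph{relative} Rips machine --- legitimate since $\lp G,\Per\rp$ is finitely presented relative to $\Per$ --- produces a non-trivial splitting of $G$, relative to $\Per$, over a two-ended or parabolic edge group, which is infinite by Theorem \ref{BowditchConnectedness}. A two-ended edge group arising in this way is ruled out exactly as in Swarup's resolution of the cut-point problem for hyperbolic groups, so the splitting is a genuine peripheral splitting. Using an accessibility result for groups finitely presented relative to $\Per$, I would then pass to a \emph{maximal} peripheral splitting of $\lp G,\Per\rp$; its vertex groups $\lp G_v,\Per_v\rp$ are relatively hyperbolic with connected boundary (by standard combination results together with Theorem \ref{BowditchConnectedness}) and, by maximality, admit no further peripheral splitting, so by the contrapositive of (ii) they have no global cut point, and hence by (i) have locally connected boundary. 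Finally, $M$ is the quotient of the disjoint union of the $\partial\lp G_v,\Per_v\rp$ obtained by gluing along parabolic points according to the Bass--Serre tree of this splitting; local connectedness of $M$ at a non-gluing point is immediate, and at a parabolic point $p$ fixed by some $P\in\Per$ I would argue straight from the combinatorial-horoball description of the cusped space that $p$ has a neighborhood basis of connected sets --- the horoball over $P$ is coned off at its cusp, so the many vertex-boundary pieces meeting near $p$ all chain together through $p$. This contradicts the assumption that $M$ is not locally connected.

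The main obstacle is precisely the point where this departs from Bowditch's original argument: $\Per$ may contain infinitely presented subgroups, so $G$ itself need not be finitely presented --- only finitely presented relative to $\Per$ --- which forces both the Rips-machine step and the accessibility step into the relative category, and forces the final reassembly at parabolic points to depend only on the geometry of the combinatorial horoballs rather than on any hypothesis about the ends or presentation of the peripheral subgroups.
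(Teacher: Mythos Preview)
The paper does not prove this theorem: it is quoted from \cite{DasgHruskLocallyConnected}*{Theorem 1.1} and used as a black box. Indeed, the paper explicitly remarks that Bowditch's earlier result \cite{BowditchPeriph}*{Theorem 1.5} --- which assumes the peripheral subgroups are finitely presented, or one- or two-ended --- would already suffice here, since every $P\in\Per$ in this paper is virtually abelian. So there is no in-paper argument to compare your sketch against; what you have outlined is an attempt at the Dasgupta--Hruska theorem itself.

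As such an outline, the architecture is right --- it is Bowditch's strategy, with the finiteness hypotheses on $\Per$ replaced by relative finite presentation --- and you correctly flag the relative Rips machine and relative accessibility as the places where work is needed. Two cautions, though. First, your fact (i), ``no global cut point $\Rightarrow$ locally connected'', is not a free ``purely topological--dynamical'' input: Bowditch's proof of precisely this implication already uses the structure of bounded parabolic points, and one must check that no tameness hypothesis on $\Per$ sneaks in there. Second, your final reassembly step is more delicate than you indicate: $M$ is not literally the quotient of $\bigsqcup_v \partial\lp G_v,\Per_v\rp$ along parabolic points, and local connectedness at a parabolic fixed point where infinitely many vertex-boundaries accumulate is exactly the subtle place --- the horoball picture you sketch is the right intuition but needs a genuine argument controlling how those pieces nest. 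What you have is the skeleton of the Dasgupta--Hruska paper; the gaps you wave past are its content.
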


As observed in \cite{BowditchPeriph}, Theorems \ref{BowditchConnectedness} and \ref{BowditchAccess} combined results in the fact that each vertex group in the splitting from \ref{BowditchAccess} is relatively $1$--ended.

\begin{theorem}[\label{HHJSJ}\cite{HHCanonSplittings}*{Theorem 1.1, Theorem 8.5, and Proposition 8.7}] Let $\lp G,\Per\rp$ be a relatively hyperbolic group. Suppose that the Bowditch boundary $M=\partial\lp G,\Per\rp$ is connected and locally connected. The canonical JSJ tree of cylinders for splittings of $G$ over elementary subgroups relative to peripheral subgroups is a tree $T\lp M\rp$ that depends only on the topological structure of the boundary M.
Moreover, the tree $T\lp M\rp$ has vertex set $Z\sqcup \Pi$ where $Z$ is the set of all global cut points and inseparable exact cut pairs, and $\Pi$ is the collection of pieces of $M$.
Furthermore, every vertex stabilizer of $T\lp M\rp$ is the one of the following types:
\begin{enumerate}
    \item peripheral
    \item non-parabolic $2$--ended
    \item quadratically hanging with finite fiber, or
    \item rigid.
\end{enumerate}
\end{theorem}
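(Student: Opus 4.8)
The plan is to follow Bowditch's program for hyperbolic groups \cite{BowditchCutPoints}, adapted to the relatively hyperbolic setting, and to combine it with the Guirardel--Levitt theory of trees of cylinders \cite{GLJSJs}. First I would establish the group-theoretic input: since $\lp G,\Per\rp$ is relatively hyperbolic, elementary subgroups are small (each is $2$--ended or parabolic), $G$ is finitely presented relative to $\Per$, and the relevant class is well-behaved enough that a JSJ tree for splittings of $G$ over elementary subgroups relative to $\Per$ exists by the Guirardel--Levitt existence theorem. Such a tree is canonical only up to its deformation space, so to obtain a genuinely canonical object I would pass to the tree of cylinders $T_c$ for the co-elementary equivalence relation on infinite elementary subgroups, where two such subgroups are equivalent when the group they generate is elementary (equivalently, when they have the same fixed set in $M$). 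This relation is admissible in the sense of \cite{GLJSJs}, so $T_c$ is a well-defined $G$--tree, invariant under $\mathrm{Out}\lp G\rp$, and one checks that it is again an $\lp \mathcal{E},\Per\rp$--tree and a JSJ tree, using that the stabilizer of a cylinder is the maximal elementary subgroup containing the corresponding edge group.

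Next I would build the topological model. By Theorem \ref{ConnectedGivesLocallyConnected}, $M$ is connected and locally connected, hence a Peano continuum, which lets Whyburn's cyclic-element theory and Bowditch's pretree machinery be applied. Following \cite{BowditchPeriph} and its refinement in \cite{HHCanonSplittings}, one records the set $Z$ of global cut points together with the inseparable exact cut pairs of $M$, equips $Z$ with its natural betweenness (median pretree) structure, and shows that the bridges of this pretree are exactly the pieces $\Pi$ of $M$. Bowditch's pretree-to-tree construction then yields a simplicial tree $T\lp M\rp$ with vertex set $Z\sqcup \Pi$; since homeomorphisms of $M$ permute cut points, cut pairs, and pieces, the action of $G$ on $M$ induces an action on $T\lp M\rp$, and by construction $T\lp M\rp$ depends only on the topology of $M$.

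The heart of the argument is to identify $T\lp M\rp$ with $T_c$ equivariantly. In one direction, each edge of $T\lp M\rp$ --- an incidence of a cut point or cut pair inside a piece --- determines a splitting of $G$ over the stabilizer of that cut point or cut pair, which is parabolic or $2$--ended; these splittings are relative to $\Per$ and universally elliptic, since their edge groups fix topologically canonical points of $M$, so the JSJ tree, hence $T_c$, dominates $T\lp M\rp$. In the other direction one needs the converse of Theorem \ref{BowditchConnectedness} for $2$--ended edge groups: every non-trivial elementary splitting of $G$ relative to $\Per$ produces a global cut point or an inseparable exact cut pair in $M$ --- the delicate generalization carried out in \cite{HHCanonSplittings}. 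It follows that $T\lp M\rp$ sees every universally elliptic elementary splitting, so it dominates $T_c$; combined with the tree-of-cylinders normalization (the cylinders of elementary edge groups are exactly the stars of the $Z$--vertices of $T\lp M\rp$), the two trees agree. I expect this identification --- especially checking that only inseparable exact cut pairs contribute, and that the cylinder construction neither collapses nor refines $T\lp M\rp$ --- to be the main obstacle.

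Finally I would classify vertex stabilizers. A parabolic cut point has peripheral stabilizer (type 1). A loxodromic cut point or an exact cut pair has non-parabolic $2$--ended stabilizer (type 2), by a convergence-group argument: a subgroup fixing a point or an unordered pair of points of $M$ and acting as a convergence group is $2$--ended unless parabolic. For a piece $v\in \Pi$, the restriction of the convergence action of $\mathrm{Stab}\lp v\rp$ to that piece is analyzed via Bowditch's Fuchsian-recognition theorem: either it is, up to a finite fiber, the action of a Fuchsian group, in which case $v$ is quadratically hanging with finite fiber (type 3), or the piece admits no compatible elementary splitting and $v$ is rigid (type 4); local connectedness is used again to guarantee the restricted action is well-behaved. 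This Fuchsian-recognition step is the second most delicate point, after the identification of $T\lp M\rp$ with $T_c$.
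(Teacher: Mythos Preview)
The paper does not prove this theorem: it is quoted verbatim from Haulmark--Hruska \cite{HHCanonSplittings} (Theorem 1.1, Theorem 8.5, Proposition 8.7) and used as a black box throughout. So there is no ``paper's own proof'' to compare against; the result is imported.

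That said, your outline is a faithful sketch of the Haulmark--Hruska argument itself: build the JSJ via Guirardel--Levitt, normalize with the tree of cylinders for the co-elementary relation, construct $T(M)$ from the cut-point/cut-pair pretree on the Peano continuum $M$, identify the two trees, and classify vertex stabilizers via convergence-group and Fuchsian-recognition arguments. You have correctly flagged the two genuinely hard steps --- showing every elementary splitting is witnessed by a cut point or inseparable exact cut pair in $M$, and the quadratically-hanging recognition for pieces. One small correction: there are no ``loxodromic cut points'' in $M$; global cut points in the Bowditch boundary are always parabolic fixed points (this is Bowditch's theorem \cite{BowditchPeriph}), so type-2 vertices arise only from inseparable exact cut pairs, not from cut points. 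Otherwise your plan matches the cited source, and for the purposes of this paper simply citing \cite{HHCanonSplittings} is what is done.
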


In particular, if $H$ is a vertex group of the third type, it is shown in \cite{HHCanonSplittings} that $H$ fits into a short exact sequence $$1\rightarrow F\rightarrow H\rightarrow \pi_1\lp \Sigma\rp\rightarrow 1,$$ where $F$ is finite and $\Sigma$ is a hyperbolic two-orbifold. We note that the proof of the existence of this short exact sequence relies on the Convergence Group Theorem \cites{Tukia,Gabai,CassonJungreis}.

In this paper, we are considering hierarchies where virtually free and virtually Fuchsian groups are terminal (see Section \ref{SectionHierarchies} and particularly Definition \ref{ATEHDef}). The following lemma will be used to see that vertex groups of the third type in Theorem \ref{HHJSJ} are terminal. This lemma is surely known to the experts, but a proof is included for completeness.

\begin{lemma}
    Suppose that $H$ is a vertex group of the third type in Theorem \ref{HHJSJ}, then either $H$ is virtually free or virtually Fuchsian.
\end{lemma}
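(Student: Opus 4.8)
The plan is to exploit the short exact sequence $1 \to F \to H \to \pi_1(\Sigma) \to 1$ recorded just before the statement, in which $F$ is finite and $\Sigma$ is a hyperbolic two-orbifold, and to split into two cases according to whether $\Sigma$ is closed. First I would note that since $\pi_1(\Sigma)$ is finitely generated and $F$ is finite, $H$ is finitely generated, and the quotient map $H \to \pi_1(\Sigma)$, having finite kernel, is a quasi-isometry; hence $H$ is quasi-isometric to $\pi_1(\Sigma)$.

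If $\Sigma$ is not closed, i.e. it has nonempty boundary or a cusp, then $\pi_1(\Sigma)$ has a finite-index subgroup isomorphic to the fundamental group of a surface with boundary or of a noncompact surface, which is free; so $\pi_1(\Sigma)$ is virtually free. I would then observe that a finitely generated finite extension of a virtually free group is virtually free: the preimage in $H$ of a free finite-index subgroup $W$ of $H/F$ is a finite-index subgroup of $H$ which acts, via the map to $W$, on the Bass--Serre tree of $W$ with every stabilizer equal to $F$, and a finitely generated group acting on a tree with finite stabilizers is virtually free. Thus $H$ is virtually free.

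If $\Sigma$ is closed, then $\pi_1(\Sigma)$ is a cocompact Fuchsian group and so acts properly discontinuously and cocompactly by isometries on $\mathbb{H}^2$; hence $\pi_1(\Sigma)$, and therefore $H$, is quasi-isometric to $\mathbb{H}^2$. Consequently $H$ is word-hyperbolic with $\partial H$ homeomorphic to $S^1$, and the action of $H$ on $\partial H = S^1$ is a uniform convergence action. By the Convergence Group Theorem of Tukia, Gabai, and Casson--Jungreis, $H$ is then topologically conjugate to (the boundary action of) a cocompact Fuchsian group, so $H$ is virtually Fuchsian. This gives the stated dichotomy.

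The only substantive ingredient is the Convergence Group Theorem used in the closed case, but this is exactly the tool already invoked to produce the short exact sequence, so the lemma needs nothing beyond what is assumed; alternatively, in the closed case one could bypass it by showing $H$ is virtually torsion-free, after which a torsion-free finite-index subgroup injects with finite index into the closed surface group underlying $\pi_1(\Sigma)$ and is therefore itself a closed surface group. The remaining points are routine and require only care: that a quotient by a finite normal subgroup is a quasi-isometry, and the permanence of ``virtually free'' under finite extensions.
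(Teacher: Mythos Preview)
Your proof is correct and takes a genuinely different (and in some ways more direct) route than the paper's.

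In the non-closed case, the paper argues via the boundary: $H$ is quasi-isometric to a free group, so $\partial H$ is a Cantor set, and then Dunwoody's accessibility together with Serre's result gives an action on a tree with finite vertex stabilizers, hence virtual freeness. Your argument bypasses the boundary entirely by pulling back the Bass--Serre tree of a free finite-index subgroup of $\pi_1(\Sigma)$ and observing that all stabilizers equal $F$; since the quotient graph is finite (it coincides with $W\backslash T$), $H'$ is the fundamental group of a finite graph of finite groups and hence virtually free. This is more elementary and self-contained.

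In the closed case, the paper shows $\pi_1(\Sigma)$ is residually finite and good in the sense of Serre, deduces from \cite{GJZZGoodness} that $H$ is residually finite, and then separates $F$ to obtain a finite-index subgroup of $H$ embedding in $\pi_1(\Sigma)$. You instead observe that $H$ is word-hyperbolic with $\partial H\cong S^1$ and apply the Convergence Group Theorem directly to the uniform convergence action of $H$ on its boundary, concluding that $H$ itself is (conjugate to) a cocompact Fuchsian group. This is cleaner and, as you note, introduces no new dependency since the Convergence Group Theorem was already invoked to produce the short exact sequence. The paper's route has the minor advantage of exhibiting an explicit finite-index subgroup that injects into $\pi_1(\Sigma)$, which is in the spirit of the alternative you sketch at the end; but for the lemma as stated your argument suffices and is shorter.
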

\begin{proof}
    Let $$1\rightarrow F\rightarrow H\rightarrow \pi_1\lp \Sigma\rp\rightarrow 1,$$ be the short exact sequence as above. Let $\Sigma'$ be a finite cover of $\Sigma$ so that $\Sigma'$ is a hyperbolic surface. If $\Sigma'$ is not closed, then $\pi_1\lp \Sigma'\rp$ is virtually free. As $H$ is quasi-isometric to a free group, it has boundary homeomorphic to a Cantor set. As explained in \cite{KapovichBenakli}*{Theorem 8.1}, Dunwoody's accessibility result \cite{DunwoodyAccess} can be used to show that $H$ acts on a tree with finite vertex stabilizers, and so \cite{treesbyserre}*{II.2.6 Proposition 11} shows $H$ is virtually free.
    
    In the case that $\Sigma'$ is closed, $\pi_1\lp \Sigma'\rp$ is LERF by \cite{ScottsSurfaceLerf}. As $\pi_1\lp \Sigma'\rp$ is finite index in $\pi_1\lp \Sigma\rp$, $\pi_1\lp \Sigma\rp$ is also LERF, and therefore residually finite (we note that surface groups are residually finite see, for example, the work of Hempel in \cite{Hempel}). Additionally, $\pi_1\lp \Sigma\rp$ is good in the sense of Serre by \cite{GJZZGoodness}*{Proposition 3.7} and \cite{SerreCohomology}*{Chapter 1, Section 2.6}. Therefore, since $H$ fits in a short exact sequence $$1\rightarrow F\rightarrow H\rightarrow \pi_1\lp \Sigma\rp\rightarrow 1,$$  $H$ is residually finite by \cite{GJZZGoodness}*{Proposition 6.1}. Therefore, there is an $N$ which is finite index and normal in $H$ such that $N\cap F=\{1\}$. Then $N$ embeds in a Fuchsian group, so $H$ is virtually Fuchsian.
\end{proof}

\section{Quasi-Isometry Results}\label{SectionQIResults}
The goal of this section is to prove Lemma \ref{BulkLemma}, which is used to show that the class of groups being studied is closed under quasi-isometry. The first result is a combination of known quasi-isometry results.

\begin{lemma}\label{RelVAb}
Suppose that $\lp G,\Per\rp$ is relatively hyperbolic and that each $P\in \Per$ is virtually abelian and $1$--ended. Let $G'$ be a group quasi-isometric to $G$. Then there is a collection of subgroups $\Per'$ so that $\lp G',\Per'\rp$ is relatively hyperbolic and each $P'\in\Per'$ is virtually abelian and $1$--ended.
\end{lemma}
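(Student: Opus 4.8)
The plan is to assemble this from three known facts in the literature: (i) the invariance of relative hyperbolicity under quasi-isometry together with the fact that the quasi-isometry can be taken to respect the peripheral structure, (ii) the quasi-isometry classification of the peripheral subgroups that are produced, and (iii) a normalization step to ensure each peripheral subgroup is $1$-ended. First I would invoke the work of Dru{\c t}u \cite{DrutuPeriphStab} (and Behrstock--Dru{\c t}u--Mosher \cite{BDMPeriphStab}): since $(G,\Per)$ is relatively hyperbolic and $G'$ is quasi-isometric to $G$, there is a collection $\Per''$ of subgroups of $G'$ so that $(G',\Per'')$ is relatively hyperbolic, and moreover each $P''\in\Per''$ is quasi-isometric to one of the peripheral subgroups $P\in\Per$ (the peripheral subgroups are preserved up to quasi-isometry by the ``peripheral structure is preserved'' half of those results).

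Next I would use the fact that a finitely generated group quasi-isometric to a virtually abelian group is itself virtually abelian; this is a classical consequence of Gromov's polynomial growth theorem, since virtual abelianity is detected by polynomial growth of the appropriate degree, which is a quasi-isometry invariant. Hence each $P''\in\Per''$ is virtually abelian. This already gives that $(G',\Per'')$ is relatively hyperbolic with virtually abelian peripheral subgroups, but some of these $P''$ may fail to be $1$-ended (they could be finite or two-ended, i.e. virtually $\mathbb{Z}$). To fix this I would apply Lemma \ref{ThrowOut2ended}: the two-ended members of $\Per''$ are hyperbolic, so they may be removed from the peripheral structure while preserving relative hyperbolicity; finite subgroups can likewise be discarded since they play no role. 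Let $\Per'$ be the resulting collection. Then $(G',\Per')$ is relatively hyperbolic and every $P'\in\Per'$ is virtually abelian and $1$-ended, as required. (One small point to check: a virtually abelian group that is infinite and not two-ended is automatically $1$-ended, since it has at most two ends and two ends forces it to be virtually $\mathbb{Z}$ — so after discarding the finite and two-ended members, what remains is genuinely $1$-ended.)

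The main obstacle, and the step requiring the most care in citation, is the claim that the quasi-isometry between $G$ and $G'$ forces a correspondence between peripheral subgroups up to quasi-isometry — i.e. that the $\Per''$ produced by Dru{\c t}u's argument really does consist of groups each quasi-isometric to some $P\in\Per$, rather than merely being \emph{some} relatively hyperbolic structure on $G'$. This is exactly the content of the stronger ``quasi-isometric rigidity of peripheral structures'' statements in \cite{DrutuPeriphStab} and \cite{BDMPeriphStab}, so I would cite those precisely; the remaining ingredients (growth-based recognition of virtual abelianity, and Lemma \ref{ThrowOut2ended}) are routine by comparison. I expect the write-up to be short, essentially a paragraph marshalling these references, since the lemma is explicitly flagged in the text as ``a combination of known quasi-isometry results.''
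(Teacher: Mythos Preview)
Your overall strategy matches the paper's: cite \cite{BDMPeriphStab} to get a peripheral structure $\Per''$ on $G'$ with each $P''\in\Per''$ quasi-isometric to some $P\in\Per$, then argue each $P''$ is virtually abelian and $1$-ended. Two points, however, deserve correction.

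First, your justification for step (ii) is not quite right. You claim ``virtual abelianity is detected by polynomial growth of the appropriate degree,'' but this is false: the integer Heisenberg group has polynomial growth of degree $4$, the same as $\mathbb{Z}^4$, yet is not virtually abelian. Gromov's theorem only gives you \emph{virtually nilpotent}; upgrading to virtually abelian requires an additional argument (e.g.\ Pansu's work on asymptotic cones of nilpotent groups, or the direct argument in \cite{BridsonGertsen}). The paper simply cites \cite{BridsonGertsen}*{Theorem 5.9} for this step, and you should do the same or supply the missing bridge from virtually nilpotent to virtually abelian.

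Second, your step (iii) is unnecessary and reflects a missed observation. You already asserted in step (i) that each $P''$ is quasi-isometric to some $P\in\Per$; since the number of ends is a quasi-isometry invariant and every $P\in\Per$ is $1$-ended, every $P''$ is automatically $1$-ended. There is nothing to throw out, and Lemma~\ref{ThrowOut2ended} is not needed here. This is exactly how the paper concludes: ``Since ends are preserved by quasi-isometry, each group in $\Per'$ is also $1$-ended.'' Your detour is harmless but betrays that you did not fully use what step (i) gave you.
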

\begin{proof}
The existence of a peripheral structure on $G'$ is given by \cite{BDMPeriphStab}*{Theorem 4.8}. Furthermore, since each $P'\in \Per'$ is finitely generated and quasi-isometric to some $P\in\Per$, $P'$ is virtually abelian by \cite{BridsonGertsen}*{Theorem 5.9}. Since ends are preserved by quasi-isometry, each group in $\Per'$ is also $1$--ended.
\end{proof}

%\begin{theorem}\label{QI1endedVertexTypes}[ {\cite{PapasogluWhyteQIvertexgroups}*{Theorem 0.4}]
%Let $G$ be an accessible group, and let $\pi_1\lp \Gamma,\mathcal A\rp$ be a terminal graph of groups decomposition of $G$. A group $G'$ is quasi-isometric to $G$ if and only if it is also accessible and any terminal decomposition of $G'$, $\pi_1\lp \Delta,\mathcal B\rp$ has the same set of quasi-isometry types of one ended factors and the same number of ends.
%\end{theorem}

We need the following results of Mackay and Sisto \cite{MackaySistoMaps} to be able to compare vertex groups in a JSJ splitting of two quasi-isometric groups. The second result is implicit in \cite{MackaySistoMaps}, but it was not explicitly stated. The only difference between Lemma \ref{MSInToOut} below and \cite{MackaySistoMaps}*{Theorem 1.2} is the observation that the proof of \cite{MackaySistoMaps}*{Theorem 1.2} shows that the boundary map is also shadow respecting.

\begin{lemma}[\label{MSOutToIn}\cite{MackaySistoMaps}*{Theorem 1.3}]
Suppose that $\lp G,\Per\rp$ and $\lp G',\Per'\rp$ are relatively hyperbolic groups with infinite proper peripheral groups, and that $h:\partial_\infty\lp G,\Per\rp\rightarrow\partial_\infty \lp G',\Per'\rp$ is a shadow respecting quasisymmetry. Then there exists a quasi-isometry $\hat{h}:G\rightarrow G'$ so that $\hat{h}$ extends to give $\partial_\infty\lp \hat{h}\rp=h$.
\end{lemma}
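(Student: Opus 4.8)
Since this is \cite{MackaySistoMaps}*{Theorem 1.3}, the proof in the paper will simply cite it; here I describe the strategy one would follow to prove it directly. The plan is to run the relatively hyperbolic analogue of Paulin's theorem (that a quasisymmetry of the boundary of a hyperbolic group is induced by a quasi-isometry of the group), reconstructing the cusped space from its boundary together with the shadow data and then deleting the cusps.

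First I would replace $G$ and $G'$ by their cusped Cayley graphs $X=X\lp G,\Per,\mathcal S\rp$ and $X'=X\lp G',\Per',\mathcal S'\rp$, which are $\delta$-hyperbolic with boundaries equivariantly homeomorphic to $\partial_\infty\lp G,\Per\rp$ and $\partial_\infty\lp G',\Per'\rp$ by \cite{YamanTop}. Fixing visual metrics on these boundaries, $h$ becomes a quasisymmetry between compact metric spaces, and the hypothesis that the peripheral groups are infinite and proper guarantees these boundaries are genuinely those of relatively hyperbolic (as opposed to merely hyperbolic) spaces.

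Next I would recover each cusped space up to quasi-isometry from its boundary by a hyperbolic filling (or hyperbolic cone) construction in the spirit of Bonk--Schramm: this produces a hyperbolic space $\mathrm{Con}\lp\partial X\rp$ quasi-isometric to $X$, inside which the combinatorial horoballs of $X$ correspond to the sub-fillings lying over the shadows of the peripheral cosets. A quasisymmetry of boundaries induces a quasi-isometry $\mathrm{Con}\lp\partial X\rp\to\mathrm{Con}\lp\partial X'\rp$ extending the given boundary map, and the shadow-respecting hypothesis is exactly what forces this quasi-isometry to carry the horoballs of $X$ uniformly coarsely onto the horoballs of $X'$. Transporting back through the quasi-isometries $X\to\mathrm{Con}\lp\partial X\rp$ and $\mathrm{Con}\lp\partial X'\rp\to X'$ yields a quasi-isometry $\hat H\colon X\to X'$ with $\partial_\infty\hat H=h$ that coarsely preserves horoballs; restricting to the complements of the open horoballs, which are the Cayley graphs of $G$ and $G'$ up to bounded neighbourhoods, produces the desired $\hat h\colon G\to G'$ with $\partial_\infty\hat h=h$.

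The hard part is this last step. One must check that ``shadow respecting'' is precisely strong enough to make horoballs map coarsely to horoballs in a manner compatible with their internal depth structure, and that after deleting the horoballs what remains is still a genuine quasi-isometry rather than a coarse map with unbounded fibres. Controlling the difference between the shadow of a peripheral coset and the nested shadows at deeper horoball levels, together with the non-canonical choice of visual parameter, is where the real technical work lies, and this is carried out in \cite{MackaySistoMaps}.
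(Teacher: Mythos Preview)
You correctly anticipate that the paper provides no proof of this lemma: it is stated as a direct citation of \cite{MackaySistoMaps}*{Theorem 1.3} with no accompanying argument. Your sketch of the underlying Mackay--Sisto strategy (hyperbolic cone over the boundary, shadow-respecting forcing horoballs to horoballs, then truncate) is an accurate outline of how that result is actually proved, so there is nothing to correct.
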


\begin{lemma}\label{MSInToOut}
Suppose that $\lp H,\mathcal{O}\rp$ and $\lp G,\Per\rp$ are relatively hyperbolic groups, and that $f:H\rightarrow G$ is a quasi-isometric embedding that coarsely respects the peripheral structures. Then $f$ induces a shadow respecting quasisymmetric embedding $\partial_{\infty} f:\partial\lp H,\mathcal{O}\rp\rightarrow \partial\lp G,\Per\rp.$
\end{lemma}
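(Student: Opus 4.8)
The plan is to notice that every part of the statement except the phrase ``shadow respecting'' is literally \cite{MackaySistoMaps}*{Theorem 1.2}: that theorem shows that a quasi-isometric embedding $f:H\to G$ which coarsely respects the peripheral structures induces a well-defined, injective, quasisymmetric embedding $\partial_\infty f:\partial\lp H,\mathcal O\rp\to\partial\lp G,\Per\rp$ between the Bowditch boundaries equipped with their visual metrics. So the only thing to add is that $\partial_\infty f$ is shadow respecting (in the sense of \cite{MackaySistoMaps}), and the strategy is to extract this from the geometry already constructed in the proof of that theorem. Recall that, up to uniformly bounded Hausdorff distance, a shadow in $\partial\lp G,\Per\rp$ is the set of endpoints of geodesic rays from a fixed basepoint that pass within a fixed bounded distance of a given combinatorial horoball of $X\lp G,\Per,\mathcal S\rp$, and that $\partial_\infty f$ is shadow respecting when there is a uniform constant such that the $\partial_\infty f$-image of every shadow of $\partial\lp H,\mathcal O\rp$ is within that Hausdorff distance of the intersection of a shadow of $\partial\lp G,\Per\rp$ with $\partial_\infty f\lp\partial\lp H,\mathcal O\rp\rp$, and the $\partial_\infty f$-preimage of every shadow of $\partial\lp G,\Per\rp$ is within that distance of a (possibly empty) shadow of $\partial\lp H,\mathcal O\rp$. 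If either peripheral structure is empty there are no shadows and there is nothing to prove, so assume both are nonempty.

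First I would pass to cusped spaces. As in \cite{MackaySistoMaps}, using the Groves--Manning construction, the hypothesis that $f$ coarsely respects the peripheral structures promotes $f$ to a quasi-isometric embedding $\hat f:X\lp H,\mathcal O,\mathcal T\rp\to X\lp G,\Per,\mathcal S\rp$ for which there is a constant $D$ so that every horoball of $X\lp H,\mathcal O,\mathcal T\rp$ is carried into the $D$-neighborhood of a horoball of $X\lp G,\Per,\mathcal S\rp$ and, conversely, every horoball of $X\lp G,\Per,\mathcal S\rp$ whose $D$-neighborhood meets $\hat f\lp X\lp H,\mathcal O,\mathcal T\rp\rp$ in an unbounded set arises, up to $N_D$, as such an image; this is a partially defined ``coarse bijection'' on horoballs. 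Fixing $o\in X\lp H,\mathcal O,\mathcal T\rp$ and using $\hat f\lp o\rp$ on the $G$ side, we have $\partial_\infty f=\partial_\infty\hat f$, and shadows on either side are described via rays from these basepoints.

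Then I would check the two inclusions, each with uniform constants depending only on the quasi-isometry data and the hyperbolicity constants. For the forward inclusion, let $\mathcal B$ be a horoball of $X\lp H,\mathcal O,\mathcal T\rp$ with shadow $\mathrm{Sh}\lp\mathcal B\rp$ and let $\mathcal B'$ be the horoball of $X\lp G,\Per,\mathcal S\rp$ with $\hat f\lp\mathcal B\rp\subseteq N_D\lp\mathcal B'\rp$: if $\xi\in\mathrm{Sh}\lp\mathcal B\rp$, a geodesic ray from $o$ to $\xi$ passes near $\mathcal B$, so its $\hat f$-image is a quasi-geodesic ray from $\hat f\lp o\rp$ to $\partial_\infty f\lp\xi\rp$ passing near $\mathcal B'$, and by Morse stability the geodesic ray with the same endpoints in $X\lp G,\Per,\mathcal S\rp$ also passes within a bounded distance of $\mathcal B'$; hence $\partial_\infty f\lp\xi\rp$ lies in a bounded neighborhood of $\mathrm{Sh}\lp\mathcal B'\rp$. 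For the preimage inclusion, suppose $\xi\in\partial\lp H,\mathcal O\rp$ and $\partial_\infty f\lp\xi\rp$ lies deep in the shadow of a horoball $\mathcal C'$ of $X\lp G,\Per,\mathcal S\rp$; then a $G$-geodesic from $\hat f\lp o\rp$ to $\partial_\infty f\lp\xi\rp$ enters $\mathcal C'$ to large depth, and being uniformly close to $\hat f\circ\gamma$ for $\gamma$ the $H$-geodesic from $o$ to $\xi$, it forces $\mathcal C'$ to be met deeply by $\hat f\lp X\lp H,\mathcal O,\mathcal T\rp\rp$; so $\mathcal C'$ is, up to $N_D$, the image $\hat f\lp\mathcal C\rp$ of a horoball $\mathcal C$ of $X\lp H,\mathcal O,\mathcal T\rp$, and since $\hat f$ is a quasi-isometric embedding the ray $\gamma$ itself must pass within a bounded distance of $\mathcal C$, i.e. $\xi\in\mathrm{Sh}\lp\mathcal C\rp$. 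Shadows of horoballs not met deeply by the image have preimage within bounded distance of the empty shadow. Assembling these bounds shows $\partial_\infty f$ is shadow respecting.

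The step I expect to be the main obstacle is the preimage inclusion, and in particular the bookkeeping that separates horoballs of $X\lp G,\Per,\mathcal S\rp$ met deeply by $\hat f\lp X\lp H,\mathcal O,\mathcal T\rp\rp$ — which must be images of horoballs of $X\lp H,\mathcal O,\mathcal T\rp$, with depths comparable under $\hat f$ up to uniform constants — from those met only in a bounded set. This is exactly where the hypothesis that $f$ coarsely respects the peripheral structures is used, and used in both directions, and it is precisely the content that is established implicitly, though not phrased in terms of shadows, inside the proof of \cite{MackaySistoMaps}*{Theorem 1.2}.
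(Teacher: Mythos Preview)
Your argument is plausible and essentially correct in outline, but it is considerably more involved than the paper's proof, which is a one-line citation. The paper observes that a quasi-isometric embedding that coarsely respects the peripheral structures automatically \emph{snowflakes on peripherals} in the sense of \cite{MackaySistoMaps}*{Definition 2.3}, and then applies \cite{MackaySistoMaps}*{Theorem 2.8}, which directly produces a shadow-respecting quasisymmetric embedding under that hypothesis. In other words, the shadow-respecting property is already packaged inside Theorem 2.8, not just implicit in the proof of Theorem 1.2; you are reconstructing by hand what is already stated as a theorem.

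What your approach buys is a self-contained geometric picture: the extension $\hat f$ to cusped spaces, the horoball bookkeeping, and the Morse-lemma transfer of shadows are exactly the ingredients that go into proving Theorem 2.8, so you have effectively sketched that proof. The cost is that the step you flag as the main obstacle---promoting $f$ to a quasi-isometric embedding $\hat f$ of cusped spaces with controlled behavior on horoballs---is itself nontrivial, and you are treating it as given ``as in \cite{MackaySistoMaps}'' while simultaneously claiming to extract shadow-respecting from scratch. If you are willing to cite Mackay--Sisto for the cusped-space extension, you may as well cite Theorem 2.8 for the full conclusion; if not, you owe a more careful argument for why $\hat f$ is a quasi-isometric embedding (horoball metrics grow exponentially with depth, so this requires the snowflaking control on peripherals).
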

\begin{proof}
As $f$ is a quasi-isometric embedding which coarsely respects the peripheral structure, $f$ snowflakes on peripherals (\cite{MackaySistoMaps}*{Def. 2.3}), so we can apply \cite{MackaySistoMaps}*{Theorem 2.8}.
\end{proof}

\begin{lemma}\label{BulkLemma}
Suppose that $\lp G,\Per\rp$ is relatively hyperbolic and each $P\in\Per$ is virtually abelian and $1$--ended. Let $G'$ be any group quasi-isometric to $G$, and let $\Per'$ be as in Lemma \ref{RelVAb}.
\begin{enumerate}[label=(\arabic*)]
    \item\label{Rel1end} The boundaries $\partial\lp G,\Per\rp$ and $\partial\lp G',\Per'\rp$ are homeomorphic. In particular, $\lp G',\Per'\rp$ is relatively $1$--ended if and only if $\lp G,\Per\rp$ is.
    \item\label{QI1endedVertexGroups} If $\partial\lp G,\Per\rp$ has infinitely many ends, then each vertex in the maximal splitting of Bowditch is quasi-isometric to a vertex group in the maximal splitting of $\lp G',\Per'\rp$.
    \item \label{QIRigidVertexGroups} Suppose that $\lp G,\Per\rp$ is relatively $1$--ended. If $H\subset G$ is a rigid vertex group in the JSJ decomposition given by Theorem \ref{HHJSJ}, then there is a rigid vertex group $H'$ in the JSJ decomposition of $G'$ which is quasi-isometric to $H$.
\end{enumerate}
\end{lemma}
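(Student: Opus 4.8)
The plan is to package the known quasi-isometry machinery (Lemmas \ref{RelVAb}, \ref{MSOutToIn}, \ref{MSInToOut}, and the quasi-isometry invariance of the Bowditch boundary from \cite{DrutuPeriphStab}, \cite{BDMPeriphStab}) and feed it into the topological JSJ of Haulmark--Hruska (Theorem \ref{HHJSJ}) together with Bowditch's accessibility result (Theorem \ref{BowditchAccess}). The unifying idea is that since all peripherals are virtually abelian and $1$--ended, every relatively quasi-convex subgroup is quasi-isometrically embedded (Lemma \ref{HruskaDistorted}), so I can move freely between a vertex group, its coarse image under the quasi-isometry $G \to G'$, and the corresponding vertex group in $G'$, each time recovering a quasi-isometry by comparing boundaries via Lemmas \ref{MSInToOut} and \ref{MSOutToIn}.

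For \ref{Rel1end}: a quasi-isometry $G \to G'$ together with the peripheral-structure-preserving statement of Lemma \ref{RelVAb} gives (by \cite{BDMPeriphStab} or \cite{DrutuPeriphStab}, and then \cite{YamanTop}) a homeomorphism $\partial(G,\Per) \cong \partial(G',\Per')$; more carefully, one first gets that the quasi-isometry coarsely respects the peripheral structures (this is how $\Per'$ is produced), applies Lemma \ref{MSInToOut} in both directions to get mutually inverse shadow-respecting quasisymmetries, and concludes the boundaries are homeomorphic. Combined with Theorem \ref{BowditchConnectedness}, relative $1$--endedness is exactly connectedness of the boundary, which is a topological property, giving the ``in particular.''

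For \ref{QI1endedVertexGroups}: by the accessibility theorem \ref{BowditchAccess} (and Theorem \ref{BowditchConnectedness}), the maximal splitting over finite groups relative to $\Per$ has vertex groups that are relatively $1$--ended, and there is a corresponding splitting of $G'$ by the homeomorphism of boundaries from \ref{Rel1end} — the tree of the splitting is determined by the action of the group on the (disconnected) boundary, specifically the decomposition into connected components / Bowditch's peripheral splitting structure, which is respected by the boundary homeomorphism and the induced quasi-action. Each vertex group is relatively quasi-convex, hence quasi-isometrically embedded by Lemma \ref{HruskaDistorted}, and its limit set is a connected component of (a blow-up of) the boundary; the boundary homeomorphism matches these components up equivariantly, so Lemma \ref{MSInToOut} gives a shadow-respecting quasisymmetry between the limit sets and Lemma \ref{MSOutToIn} upgrades it to a quasi-isometry between the vertex groups. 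For \ref{QIRigidVertexGroups}: assuming relative $1$--endedness, Theorem \ref{HHJSJ} produces $T(M)$ depending only on $M = \partial(G,\Per)$; since $M \cong M' = \partial(G',\Per')$ by \ref{Rel1end}, the trees $T(M)$ and $T(M')$ are equivariantly isomorphic with vertex types preserved (this is essentially \cite{HHCanonSplittings}*{Corollary 1.3}), so a rigid vertex $H \subset G$ corresponds to a rigid vertex $H' \subset G'$; both are relatively quasi-convex with limit sets the corresponding pieces of $M$ and $M'$, which are homeomorphic, so the same two-step argument (Lemma \ref{MSInToOut} to get a shadow-respecting quasisymmetry of limit sets, then Lemma \ref{MSOutToIn}) yields a quasi-isometry $H \to H'$.

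The main obstacle I expect is the bookkeeping in \ref{QI1endedVertexGroups} and \ref{QIRigidVertexGroups} verifying that the vertex groups, with their \emph{induced} peripheral structures, have limit sets that are genuinely identified by the global boundary homeomorphism in an equivariant, shadow-respecting way — in particular checking that the induced peripheral structure on a vertex group consists of infinite proper peripherals so that Lemmas \ref{MSInToOut} and \ref{MSOutToIn} actually apply (here Lemma \ref{ThrowOut2ended} is needed to discard any $2$--ended peripherals that appear in the induced structure), and that ``coarsely respects the peripheral structure'' holds for the relevant quasi-isometric embeddings. The boundary-of-a-subgroup identifications (via \cite{YamanTop} and relative quasi-convexity) are standard but need to be invoked carefully so that the hypotheses of the Mackay--Sisto lemmas are met on the nose.
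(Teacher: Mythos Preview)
Your approach to parts \ref{Rel1end} and \ref{QIRigidVertexGroups} is essentially the paper's: mutually inverse shadow-respecting quasisymmetries via Mackay--Sisto for \ref{Rel1end}, and for \ref{QIRigidVertexGroups} the paper likewise identifies the cyclic element $C\subset\partial(G,\hat\Per)$ stabilized by $H$, transports it across the boundary homeomorphism, and composes the Mackay--Sisto embeddings $\partial(H,\mathcal O)\hookrightarrow\partial(G,\Per)$ and $\partial(H',\mathcal O')\hookrightarrow\partial(G',\Per')$ (both with image $C$) to obtain the quasisymmetry, then applies Lemma \ref{MSOutToIn}. Your invocation of \cite{HHCanonSplittings}*{Corollary 1.3} is a reasonable shortcut for the vertex-matching step the paper does by hand via the cyclic-element description.

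Part \ref{QI1endedVertexGroups} is where you diverge. The paper does \emph{not} argue via the Bowditch boundary at all here: it observes that since every $P\in\Per$ is $1$--ended, each $P$ is elliptic in any splitting over finite groups, so the Bowditch maximal splitting of $(G,\Per)$ coincides with Dunwoody's (absolute) maximal splitting of $G$ over finite groups; likewise for $G'$. Then \cite{PapasogluWhyteQIvertexgroups}*{Theorem 0.4} applies directly to conclude that the sets of quasi-isometry types of one-ended vertex groups agree. This is a one-line reduction to a known theorem. Your proposed route---matching vertex groups via connected components of the Bowditch boundary and then applying Mackay--Sisto---runs into trouble: the limit set of a single vertex group $A$ is not a connected component of $\partial(G,\Per)$ (there are infinitely many translates $gA$ whose limit sets are also connected and pairwise distinct, together with the ends of the Bass--Serre tree), the word ``equivariantly'' has no meaning between the unrelated groups $G$ and $G'$, and the parenthetical ``(a blow-up of)'' conceals exactly the work that Papasoglu--Whyte does. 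Your strategy could perhaps be pushed through, but it would amount to reproving their theorem in this setting; the paper's reduction is both shorter and more robust.
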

\begin{proof}
 Let $f:G\rightarrow G'$ and $g:G'\rightarrow G$ be quasi-isometries that are quasi-inverses. Note that $f$ and $g$ coarsely respect peripherals. By \cite{MackaySistoMaps}*{Theorem 2.9}, there are shadow respecting quasisymmetries $\hat{f}:\partial\lp G,\Per\rp\rightarrow \partial\lp G',\Per'\rp$ and $\hat{g}:\partial\lp G',\Per'\rp\rightarrow \lp G,\Per\rp$. The composition $\hat{g}\circ\hat{f}$ is equal to the induced map $\widehat{g\circ f}:\partial\lp G,\Per\rp\rightarrow \lp G,\Per\rp$. As $g\circ f$ is within bounded distance of the identity on the cusped space of $\lp G,\Per\rp$, $\hat{g}\circ\hat{f}$ is the identity on $\partial\lp G,\Per\rp$. Similarly $\hat{f}\circ\hat{g}$ is the identity on $\partial\lp G',\Per'\rp$. Therefore $\hat{f}$ and $\hat{g}$ are homeomorphisms, so $\partial \lp G',\Per'\rp$ is connected if and only if $\partial\lp G,\Per\rp$ is. This shows \ref{Rel1end}.

To show \ref{QI1endedVertexGroups}, recall that Dunwoody in \cite{DunwoodyAccess} showed that all finitely presented groups are accessible, i.e. that there is a splitting of $G$ so that each vertex group is $1$--ended. Since each $P\in \Per$ is $1$--ended, $P$ cannot split over a finite group, and therefore $P$ acts elliptically on the maximal splitting of Dunwoody. Therefore, the maximal splittings given by Dunwoody and Bowditch coincide, and the same can be said about $G'$. Now, \cite{PapasogluWhyteQIvertexgroups}*{Theorem 0.4} shows that the Bowditch splittings of $\lp G,\Per\rp$ and $\lp G',\Per'\rp$ have the same set of quasi-isometry types of vertex groups.

Finally we show \ref{QIRigidVertexGroups}. All relatively quasi-convex subgroups of both $\lp G,\Per\rp$ and $\lp G',\Per'\rp$ are quasi-isometrically embedded by Lemma \ref{HruskaDistorted}.

Now, by Theorem \ref{MSInToOut} there is a shadow respecting quasisymmetry between $\partial\lp G,\Per\rp$ and $\partial\lp G',\Per'\rp$. Note that this is a homeomorphism and therefore must preserve the set of inseparable exact cut pairs.

Let $H$ be a rigid vertex group of the JSJ of $\lp G,\Per\rp$. As in the proof of \cite{HHCanonSplittings}*{Theorem 8.5}, take $\hat{\Per}$ to be the union of $\Per$ with the $G$-stabilizers of exact inseparable cut pairs in $\partial\lp G,\Per\rp$. Thus $\partial\lp G,\hat{\Per}\rp$ is connected, and the collection of pieces of $\partial\lp G,\Per\rp$ are in bijection with the collection of non-trivial cyclic elements of $\partial\lp G,\hat{\Per}\rp$. Now, the rigid vertex groups of the JSJ are exactly the non-quadratically hanging stabilizers of cyclic elements of $\partial\lp G,\hat{\Per}\rp$ (\cite{HHCanonSplittings}*{Lemma 8.6}). Let $\mathcal{O}$ be the set of infinite subgroups of the form $H\cap gPg^{-1}$ for some $g\in G$ and $P\in\hat{\Per}$. We note that $\lp H,\mathcal{O}\rp$ is relatively hyperbolic by \cite{HruskaDistortion}*{Theorem 9.1}. Now, by \cite{HHCanonSplittings}*{Proposition 8.2(2)}, $\partial\lp H,\mathcal O\rp$ is $G$-equivariantly homeomorphic to $C$ where $C$ is the non-trivial cyclic element of $\partial\lp G,\hat{\Per}\rp$ that $H$ stabilizes.

Since the set of inseparable exact cut pairs is preserved under homeomorphism, we can construct $\hat{\Per'}$ by adding $G'$ stabilizers of exact inseparable cut pairs. Since $\partial\lp G',\hat{\Per'}\rp$ is the quotient of $\partial\lp G,\Per'\rp$ identifying the inseparable exact cut pairs, there is a homeomorphism between $\partial\lp G,\hat{\Per}\rp$ and $\partial\lp G',\hat{\Per'}\rp$. This homeomorphism preserves non-trivial cyclic elements. Additionally, by observing that the cyclic component corresponding to $H$ is not a necklace, there is a corresponding rigid vertex group $H'$ in the JSJ tree for $\lp G',\Per'\rp$.

Since each relatively quasi-convex subgroup $H$ of $G$ is quasi-isometrically embedded, Theorem \ref{MSInToOut} gives a shadow respecting quasi-symmetric embedding which asymptotically snowflakes from $\partial\lp H,\mathcal O\rp$ into $\partial\lp G,\Per\rp$ and $\partial\lp H',\mathcal O'\rp$ into $\lp G',\Per'\rp$. The image of both of these embeddings is exactly the cyclic element $C$. Since the property of being a shadow respecting quasi-symmetric embedding is preserved under restriction of codomain, this gives a shadow respecting quasi-symmetry $f$ between $\partial\lp H,\mathcal O\rp$ and $C$, and $g$ between $\partial\lp H',\mathcal O'\rp$ and $C$, where $C$ is given the induced metric structure as a subset of $\partial\lp G,\Per\rp$, which is quasisymmetric to the metric structure as a subset of $\partial\lp G',\Per'\rp$. Therefore, $g^{-1}\circ f$ is a shadow respecting quasi-symmetry from $\partial\lp H,\mathcal O\rp$ to $\partial\lp H',\mathcal O'\rp$. By Theorem \ref{MSOutToIn}, we see that $H$ and $H'$ are quasi-isometric as claimed.
\end{proof}

\section{Hierarchies}\label{SectionHierarchies}
A \emph{hierarchy} $\mathcal H$ of a group $G$ is a rooted tree, with each edge oriented away from the root, and each vertex is labeled by a pair $\lp H,\Delta_H\rp$, where $H$ is a subgroup of $G$ and $\Delta_H$ is a splitting of $H$ satisfying the following conditions:
\begin{itemize}
    \item The root of $\mathcal H$ is labeled by $\lp G,\Delta_G\rp$ for some splitting $\Delta_G$.
    \item If $\lp H,\Delta_H\rp$ labels a vertex and $\Delta_H$ is non-trivial, there is a bijective correspondence between vertex groups of $\Delta_H$ and edges from $\lp H,\Delta_H\rp$ to $\lp H',\Delta_{H'}\rp$ where $H'$ is a vertex group of $\Delta_H$.
    \item If $\lp H,\Delta_H\rp$ labels a vertex of $\mathcal H$ and $\Delta_H$ is a trivial splitting, then $\lp H,\Delta\rp$ has no descendants, in which case we say $\lp H,\Delta_H\rp$ is \emph{terminal}
\end{itemize}

We note that the definition of a hierarchy could have had each splitting $\Delta$ be replaced by the Bass-Serre tree of the splitting as was done in \cite{HillLTforHyperbolic}. This approach would have required an additional ``conjugacy invariance" assumption in exchange for getting an action of $G$ on the tree underlying $\mathcal H$. However, as we don't use the action of $G$ on a hierarchy in this paper, we have chosen the above definition.

The \emph{depth} of a vertex $\lp H, \Delta_H\rp$ in $\mathcal H$ is the distance from the root of $\mathcal H$ to $\lp H,\Delta_H\rp$. Let $\mathcal H^i$ be the set of vertices of depth $i$. We say that a hierarchy $\mathcal H$ is \emph{finite} if $\mathcal H^i$ is empty for sufficiently large $i$. The \emph{length} of a hierarchy $\mathcal H$ is the smallest $i$ such that $\mathcal H^j$ is empty for all $j\geq i$. 

 Let $\mathcal {E}$ be a class of subgroups closed under taking subgroups. A hierarchy $\mathcal H$ is said to be \emph{over} $\mathcal E$ if every splitting is over $\mathcal E$. Suppose $\lp G,\Per\rp$ is relatively hyperbolic. If $\mathcal H$ is a hierarchy over the class of elementary subgroups, then each vertex group of $\Delta_G$ is relatively quasi-convex by \cite{HHCanonSplittings}*{Theorem 5.2}. Let $H$ be a vertex group of $\Delta_G$ and let $\mathcal O'_H$ denote the set of infinite subgroups of the form $gPg^{-1}\cap H$. Then $\lp H,\mathcal O'_H\rp$ is relatively hyperbolic. Let $\mathcal O_H$ be $\mathcal O'_H$ with any $2$--ended elements removed. Then $\lp H,\mathcal O_H\rp$ is relatively hyperbolic by Lemma \ref{ThrowOut2ended}. Continuing down the hierarchy, we get a peripheral structure on every group labeling a vertex in a hierarchy over elementary subgroups. Say that a hierarchy $\mathcal H$ over elementary subgroups is \emph{relative} to $\Per$ if for each vertex group $\lp H,\Delta_H\rp$ and $\mathcal O_H$ as above, each $O\in \mathcal O_H$ acts elliptically on the tree associated to $\Delta_H$. 

%Suppose that $\lp H,\Delta_H$ is a vertex in the hierarchy $\mathcal H$. Let $\lp H,\Delta_H\rp$ be a vertex in the hierarchy $\mathcal H$. If $gPg^{-1}\cap H$ acts elliptically on the tree associated to $\Delta_H$ whenever $gPg^{-1}\cap H$ is not virtually cyclic or finite, then the hierarchy is said to be \emph{relative} to $\Per$.

\begin{definition}[Almost Toral with Elementary Hierarchy]\label{ATEHDef}
Say that $G\in\mathcal{ATEH}$ if the following holds:
\begin{enumerate}
    \item $G$ is hyperbolic relative to $\Per$ where each $P\in \Per$ is virtually abelian
    \item $G$ has a finite hierarchy $\mathcal H$ over elementary subgroups and $\mathcal H$ is relative to $\Per$
    \item Each terminal vertex of $\mathcal H$ is labeled by $\lp H,\Delta_H\rp$ where $H$ is either virtually free, virtually abelian, or virtually Fuchsian. 
\end{enumerate}
\end{definition}

Clearly the hierarchy required is over virtually abelian subgroups.

\begin{definition}[Toral with Elementary Hierarchy]
Suppose $\lp G,\Per\rp\in\mathcal{ATEH}$. Say $\lp G,\Per\rp\in \mathcal{TEH}$ if $\lp G,\Per\rp$ is toral relatively hyperbolic, i.e. $G$ is torsion-free and each $P\in\Per$ is abelian.
\end{definition}

Note that if $G$ is a toral relatively hyperbolic group, any $1$--ended virtually abelian group must be peripheral, and therefore is abelian. Since a torsion-free virtually cyclic group is cyclic, the hierarchy given for $G\in\mathcal{TEH}$ is in fact an abelian hierarchy.

\section{The BHH--JSJ hierarchy}\label{SectionBHHJSJ}

In this section, we assume $\lp G,\Per\rp$ is a relatively hyperbolic group where each $P\in \Per$ is virtually abelian and $1$--ended. The goal of this section is Theorem \ref{ATEHClosed} which is a more precise version of Theorem \ref{QIClosed} from the introduction. First, the BHH--JSJ hierarchy is defined. It is then shown that the BHH--JSJ hierarchy of a group in $\mathcal{ATEH}$ is finite in Theorem \ref{MakeGood}. Next we apply Lemma \ref{BulkLemma} \ref{QI1endedVertexGroups} and \ref{QIRigidVertexGroups} to complete the result.

If the boundary $\partial\lp G,\Per\rp$ is not connected, $G$ splits relative to $\Per$ over a finite subgroup. Using Theorem \ref{BowditchConnectedness} and Theorem \ref{BowditchAccess}, $G$ splits over finite groups relative to $\Per$, so that each vertex group is hyperbolic relative to the elements of $\Per$ that it contains. Additionally, each vertex group either finite or relatively $1$--ended. Now, using Theorem \ref{ConnectedGivesLocallyConnected} the boundary of each vertex group of this splitting is connected and locally connected. %Additionally, since $\partial\lp G,\Per\rp$ is locally connected, then by (Bowditch result in peripheral splittings paper) we have that the boundary of each vertex group is locally connected. 

 By Theorem \ref{HHJSJ}, the boundary of each relatively $1$--ended vertex group from the Bowditch splitting gives a canonical JSJ splitting over elementary subgroups relative to the peripheral structure. Vertex groups in this JSJ are  either peripheral, non-parabolic $2$--ended, quadratically hanging with finite fiber, or rigid. 
 
 \begin{definition}[The BHH--JSJ Hierarchy]
 Let $\lp G,\Per\rp$ be a relatively hyperbolic group. We define the BHH--JSJ of $\lp G,\Per\rp$ as follows. At any point, if $G$ is virtually free, virtually abelian, or virtually Fuchsian, we give $G$ a trivial splitting and it is terminal. Otherwise, if $G$ has more than one end relative to $\Per$, take the maximal splitting over finite groups relative to $\Per$ of Bowditch. For each vertex group, assign it the peripheral structure of the elements of $\Per$ that it contains. If $G$ is $1$--ended relative to $\Per$, take the canonical JSJ tree given by Haulmark and Hruska. For every rigid vertex group $H$, give $H$ the peripheral structure $\mathcal O$ where for each $O\in \mathcal O$, $O=gPg^{-1}\cap H$ for some $g\in G$ and $P\in\Per$. Remove any finite or $2$--ended elements from $\mathcal O$.
 \end{definition}
 
 The following observations will be useful throughout the remainder of the paper. If $H$ is a rigid vertex group in the JSJ splitting of Theorem \ref{HHJSJ}, $\mathcal O$ is still a peripheral structure on $H$ by Theorem \ref{ThrowOut2ended}, but in particular it contains only $1$--ended subgroups, allowing us to apply Lemma \ref{BulkLemma}. The BHH--JSJ hierarchy is elementary, as all of the splittings are over $2$--ended or peripheral subgroups. Additionally, the BHH--JSJ is relative to $\Per$. If the BHH--JSJ hierarchy is finite, it can only terminate in groups that are virtually free, virtually abelian (peripheral), virtually Fuchsian, or $1$--ended with no cut points or cut pairs in the boundary.
 
The following result allows one to consider the BHH--JSJ hierarchy of any group $G\in\mathcal{ATEH}$ as it satisfies the conditions of the required hierarchy in Definition \ref{ATEHDef}. In particular, given any satisfactory hierarchy, it is shown that the BHH--JSJ hierarchy is finite. We note that a result of Louder and Touikan \cite{LTStrongAccess}*{Corollary 2.7} could alternatively be used to show that the BHH-JSJ hierarchy is finite once it is known that groups in $\mathcal{ATEH}$ virtually do not contain 2-torsion.

\begin{theorem}\label{MakeGood}
Suppose $\lp G,\Per\rp\in\mathcal{ATEH}$. Then the BHH--JSJ hierarchy of $G$ is finite and terminates in virtually free, virtually abelian, or virtually Fuchsian groups.
\end{theorem}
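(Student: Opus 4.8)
Given $(G,\Per) \in \mathcal{ATEH}$, we have by hypothesis *some* finite elementary hierarchy $\mathcal{H}$ relative to $\Per$ terminating in virtually free, virtually abelian, or virtually Fuchsian groups. We must show the *canonically defined* BHH--JSJ hierarchy is also finite and terminates appropriately. The natural strategy is a comparison/induction argument: at each vertex group $H$ of the BHH--JSJ, I want to show $H$ (with its induced peripheral structure) sits inside the given hierarchy $\mathcal{H}$ in a controlled way, so that its "complexity" in $\mathcal{H}$ strictly decreases as we descend the BHH--JSJ, forcing termination.

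Let me think about what the complexity measure should be. The hierarchy $\mathcal{H}$ has finite length $\ell$. For a vertex group $(H, \mathcal{O}_H)$ appearing in the BHH--JSJ, I want to associate the minimal length of a hierarchy for $(H,\mathcal{O}_H)$ satisfying the $\mathcal{ATEH}$ conditions — or more precisely, track how $H$ relates to the vertex groups of $\mathcal{H}$. The key point: the JSJ of Haulmark--Hruska is *canonical* and *universally elliptic*. So if $\Delta_G$ is the top-level splitting of $\mathcal{H}$ (elementary, relative to $\Per$), then the edge groups of $\Delta_G$ act elliptically on the BHH--JSJ tree $T(M)$ (since $T(M)$ is a JSJ, it dominates... wait, no — universal ellipticity says edge groups of $T(M)$ act elliptically on *other* $(\mathcal{E},\Per)$-trees, and $T(M)$ *dominates* every universally elliptic tree). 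Actually the relevant fact: $T(M)$ dominates $\Delta_G$ if $\Delta_G$ is universally elliptic; more robustly, one compares refinements. The clean route is: vertex groups of $T(M)$ are (up to conjugacy) subgroups of vertex groups of $\Delta_G$, OR $\Delta_G$ refines $T(M)$. Either way, by standard JSJ/accessibility machinery (Guirardel--Levitt), each vertex group of the BHH--JSJ at depth $1$ is elliptic in $\mathcal{H}$, hence contained in a vertex group of $\Delta_G$, and the induced hierarchy on it has length $\le \ell - 1$.

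**Steps in order.** First, set up the induction on the length $\ell$ of the given hierarchy $\mathcal{H}$. Base case $\ell = 1$: $\Delta_G$ is trivial, so $G$ itself is virtually free, virtually abelian, or virtually Fuchsian; in each of these cases I check directly that the BHH--JSJ terminates immediately — a virtually free group has disconnected/Cantor boundary and the Bowditch accessibility splitting has finite or virtually free... actually I need the BHH--JSJ definition's first clause: "if $G$ is virtually free, virtually abelian, or virtually Fuchsian, it is terminal," so the base case is immediate by fiat. Second, the inductive step: apply the Bowditch finite-splitting and Haulmark--Hruska JSJ to $G$, producing finitely many vertex groups $H_1, \dots, H_k$ with induced peripheral structures $\mathcal{O}_{H_i}$ (virtually abelian, $1$-ended after discarding $2$-ended pieces, which is legitimate by Lemma \ref{ThrowOut2ended}). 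Third — the heart — show each $(H_i, \mathcal{O}_{H_i}) \in \mathcal{ATEH}$ with a hierarchy of length $< \ell$: this uses that vertex groups of the canonical JSJ are relatively quasi-convex (\cite{HHCanonSplittings}*{Theorem 5.2}, already cited) hence finitely generated, and that, being elliptic with respect to $\mathcal{H}$'s top splitting $\Delta_G$, each $H_i$ is conjugate into a vertex group of $\Delta_G$; restricting $\mathcal{H}$ below that vertex and intersecting splittings with $H_i$ (Bass--Serre subgroup theorem) yields an elementary hierarchy for $H_i$ of length $\le \ell - 1$, relative to $\mathcal{O}_{H_i}$, with terminal groups still virtually free / abelian / Fuchsian (these classes are closed under passing to finitely generated — indeed quasi-convex — subgroups: virtually free and virtually abelian obviously, virtually Fuchsian by the lemma proved in the excerpt applied to relatively quasi-convex subgroups). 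Fourth, apply the inductive hypothesis to each $(H_i, \mathcal{O}_{H_i})$ to conclude its BHH--JSJ sub-hierarchy is finite with the right terminal groups; since the BHH--JSJ of $G$ is exactly the Bowditch/HH splitting at the top with the BHH--JSJ of the $H_i$ grafted on (plus the $2$-ended and peripheral vertex groups, which are virtually abelian/virtually cyclic and immediately terminal), the whole BHH--JSJ of $G$ is finite and terminates correctly.

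**Main obstacle.** The delicate step is the third one: showing that ellipticity of the BHH--JSJ vertex groups with respect to an *arbitrary* satisfactory hierarchy $\mathcal{H}$ is genuinely available, and that the induced hierarchy on $H_i$ really drops in length. The subtlety is that $\mathcal{H}$'s splittings need not be universally elliptic or JSJ-like, so I cannot directly invoke domination of $T(M)$ over $\Delta_G$. Instead I must argue the other direction — that the JSJ tree $T(M)$, being universally elliptic and maximal, is dominated by any common refinement, or use that elementary splittings relative to $\Per$ of a relatively $1$-ended group all "see" the JSJ, so $\Delta_G$ refines (a collapse of) $T(M)$; this requires care when $\Delta_G$ itself involves quadratically-hanging or peripheral pieces. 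A cleaner workaround, which I would actually adopt, is to not compare with $\Delta_G$ directly but to induct using the observation that each vertex group $H_i$ of $T(M)$ is relatively quasi-convex and hence either terminal already (peripheral, $2$-ended, or QH-with-finite-fiber — the last being virtually free or virtually Fuchsian by the lemma above) or *rigid* and properly contained in $G$; combined with strong accessibility (as the remark in the excerpt notes, \cite{LTStrongAccess}*{Corollary 2.7} once virtual absence of $2$-torsion is known, or alternatively a direct complexity argument via the length of $\mathcal{H}$), this terminates. I would present the argument via the hierarchy-length induction as the primary route and flag the strong-accessibility alternative.
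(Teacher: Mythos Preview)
Your overall strategy matches the paper's exactly: induct on the length of a witnessing $\mathcal{ATEH}$ hierarchy $\mathcal{H}$ and show that each non-terminal BHH--JSJ vertex group is contained in a vertex group of $\mathcal{H}$ at strictly greater depth, hence inherits a shorter hierarchy. The step you flag as the ``main obstacle'' is indeed the crux, and your proposal does not actually resolve it---you assert ellipticity of the $H_i$ in $\Delta_G$ in step three, then retract this in the obstacle paragraph and offer two alternatives (refinement/domination, or strong accessibility) neither of which you carry out.

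The paper's resolution is a normalization trick you are missing: before starting, collapse edges in $\mathcal{H}$ so that splittings at even depth have only \emph{finite} edge groups and splittings at odd depth have only \emph{infinite} elementary edge groups (this at most doubles the length). With this in hand, a relatively $1$--ended depth-$1$ BHH--JSJ vertex group $H'$ is automatically elliptic in the depth-$0$ splitting of $\mathcal{H}$ (it cannot split over a finite group), so $H'$ lies in some vertex group $K$ of $\mathcal{H}$ at depth $\ge 1$; choose $K$ of \emph{maximal} depth containing $H'$. Now for a rigid depth-$2$ BHH--JSJ vertex group $H\le H'$: if the odd-depth splitting $\Delta_K$ is nontrivial, restricting it to $H'$ gives an elementary splitting of $H'$ relative to its induced peripherals, in which $H$ (being rigid in the JSJ of $H'$) is elliptic, so $H$ lies in a vertex group of $\mathcal{H}$ at depth $\ge 2$. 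If $\Delta_K$ is trivial, the next (finite-edge) splitting below $K$ must also be trivial---otherwise $H'$, being relatively $1$--ended, would be elliptic there and descend further, contradicting maximality of the depth of $K$---so $K$ is terminal and $H$ is virtually abelian or virtually Fuchsian. This even/odd separation plus the maximal-depth choice of $K$ is precisely what makes the ellipticity go through without appealing to strong accessibility or JSJ domination.
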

\begin{proof}
Let $\mathcal{H}$ be a hierarchy of $G$ satisfying the assumptions of Definition \ref{ATEHDef}, given since it is assumed $G\in\mathcal{ATEH}$. First, note that by collapsing edges in the splittings of $\mathcal H$ and at most doubling the length of $\mathcal H$, $\mathcal H$ can be made to have finite edge groups at even depths, and infinite elementary edge groups at odd depths. The proof proceeds by induction on the depth of $\mathcal H$. If $\mathcal H$ has depth 0, then $G$ is virtually free, virtually abelian, or virtually Fuchsian, so the result follows trivially. Now, assume that the result holds for hierarchies with depth less than the depth of $\mathcal H$. It will be shown that every non-terminal vertex group of the BHH--JSJ hierarchy of $G$ inherits a strictly shorter hierarchy from $\mathcal H$. First, suppose that $\lp H,\Per'\rp$ is a relatively $1$--ended vertex group at depth 1 of the BHH--JSJ hierarchy. Then, since this vertex group cannot split over finite groups, it acts elliptically on the tree at depth 0 of $\mathcal H$. Therefore $\lp H, \Per'\rp$ is a subgroup of some vertex group of this tree and inherits a strictly shorter hierarchy from $\mathcal H$, as it is contained in a vertex group of depth at least 1. 

Now, suppose that $H$ is a rigid vertex group of the canonical JSJ splitting at depth 2 of the BHH--JSJ hierarchy. Then $H$ is a  subgroup of a vertex group $H'$ at depth 1 of the BHH--JSJ hierarchy. Since $H'$ is $1$--ended, from above $H'$ is the subgroup of a vertex group $K$ at depth at least 1 of $\mathcal H$. For convenience, we can assume that $K$ is chosen to be the vertex group of $\mathcal H$ of maximal depth containing $H'$. 

If $K$ splits non-trivially over elementary groups in $\mathcal H$, $H$ must act elliptically on the splitting of $K$, since $H$ is a rigid vertex group of the JSJ of $H'$ over elementary subgroups relative to the peripheral structure. This gives a strictly shorter hierarchy for $H$ inherited from a vertex group fixed by $H$ in the splitting of $K$. If the splitting of $K$ over elementary subgroups is trivial, $K$ may not split over finite groups relative to its peripheral structure, since $H'$ fixes a vertex in this tree (since $H'$ is $1$--ended). Otherwise, $H'$ is contained in a deeper vertex group of $\mathcal H$, contradicting our choice of $K$. In this case, since $K$ does not split, it is terminal, and, since $K$ is $1$--ended, $H$ is either virtually abelian, or virtually Fuchsian, and terminal, so no such $H'$ can exist.
\end{proof}

\begin{theorem}\label{ATEHClosed} Let $G\in \mathcal{ATEH}$ and $\lp G,\Per\rp$ be relatively hyperbolic where each $P\in \Per$ is virtually abelian and $1$--ended. If $G'$ is a group quasi-isometric to $G$, then $G'$ admits a peripheral structure $\Per'$ and a hierarchy $\mathcal H'$ so that $\mathcal H'$ is over elementary subgroups of $G'$ relative to $\Per'$ and terminates in virtually free, virtually abelian, or virtually Fuchsian subgroups. In particular, $G'\in\mathcal{ATEH}$. 
\end{theorem}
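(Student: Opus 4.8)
The plan is to let $\Per'$ be the peripheral structure produced by Lemma \ref{RelVAb}, to take $\mathcal H'$ to be the BHH--JSJ hierarchy of $\lp G',\Per'\rp$, and to prove by induction on the length of the BHH--JSJ hierarchy of $\lp G,\Per\rp$ that $\mathcal H'$ is finite, of length no greater than that of the hierarchy of $G$, and terminates in virtually free, virtually abelian, or virtually Fuchsian groups. Theorem \ref{MakeGood} guarantees that the BHH--JSJ hierarchy of $G$ is finite and terminates in groups of these three types, so the induction is well posed. Since the BHH--JSJ hierarchy is, by construction, over elementary subgroups and relative to the ambient peripheral structure, this is all that is needed: together with Lemma \ref{RelVAb}, it verifies the three conditions of Definition \ref{ATEHDef} and hence that $G'\in\mathcal{ATEH}$.

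For the base case I would treat the situation where $G$ is terminal, that is, virtually free, virtually abelian, or virtually Fuchsian, and observe that each of these classes is a quasi-isometry invariant. Quasi-isometry to a virtually free group forces virtual freeness by Stallings' theorem together with Dunwoody's accessibility \cite{DunwoodyAccess}, exactly as in the lemma following Theorem \ref{HHJSJ}; quasi-isometry to a virtually abelian group forces virtual abelianness by \cite{BridsonGertsen}*{Theorem 5.9}, as used already in Lemma \ref{RelVAb}; and quasi-isometry to a (cocompact) virtually Fuchsian group makes $G'$ a hyperbolic group with circle boundary, hence virtually Fuchsian by the Convergence Group Theorem \cites{Tukia,Gabai,CassonJungreis}. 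In each case $\mathcal H'$ has length $0$ and terminates as required.

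For the inductive step I would assume $G$ is non-terminal and split according to whether $\partial\lp G,\Per\rp$ is connected. When it is disconnected, $G$ has infinitely many ends relative to $\Per$ (a $2$--ended group would be virtually cyclic, hence virtually free and terminal), so the first splitting of the BHH--JSJ hierarchy of $G$ is Bowditch's maximal splitting over finite subgroups; by Lemma \ref{BulkLemma}\ref{Rel1end} the boundary $\partial\lp G',\Per'\rp$ is also disconnected, so the first splitting of $\mathcal H'$ is the corresponding Bowditch splitting, and Lemma \ref{BulkLemma}\ref{QI1endedVertexGroups} matches the relatively $1$--ended vertex groups of the two splittings up to quasi-isometry. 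When $\partial\lp G,\Per\rp$ is connected, $G$ is relatively $1$--ended with cut points or cut pairs in its boundary, the first splitting is the canonical Haulmark--Hruska JSJ tree $T(M)$, and since $T(M)$ depends only on the topology of $M=\partial\lp G,\Per\rp$ and $M\cong\partial\lp G',\Per'\rp$ by Lemma \ref{BulkLemma}\ref{Rel1end}, the JSJ trees of $G$ and $G'$ have vertex groups matching in type, with corresponding rigid vertex groups quasi-isometric by Lemma \ref{BulkLemma}\ref{QIRigidVertexGroups}. In both cases the vertex groups of terminal type --- peripheral (virtually abelian by Lemma \ref{RelVAb}), non-parabolic $2$--ended (virtually cyclic), and quadratically hanging with finite fiber (virtually free or virtually Fuchsian by the lemma following Theorem \ref{HHJSJ}) --- are terminal of the required type. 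For every other vertex group $H$ of $G$ --- a relatively $1$--ended Bowditch vertex group, or a rigid JSJ vertex group with its induced peripheral structure $\mathcal O$ of $1$--ended virtually abelian subgroups --- the BHH--JSJ hierarchy of $H$ has strictly smaller length, and its quasi-isometric partner $H'$ in $G'$ inherits a $1$--ended virtually abelian peripheral structure $\mathcal O'$ with $\partial\lp H',\mathcal O'\rp\cong\partial\lp H,\mathcal O\rp$, so the inductive hypothesis applies to the pair $H,H'$. Assembling the resulting bounds over all children shows $\mathcal H'$ is finite, of length at most that of the hierarchy of $G$, and terminates in the three allowed types.

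The step I expect to be the main obstacle is the inductive step, and specifically the verification that the first level of $\mathcal H'$ genuinely mirrors that of $G$ and that the vertex-group matching is compatible with all the relevant structure: one must check that the Bowditch and Haulmark--Hruska splittings of $G'$ are the ``same'' as those of $G$ (which rests on the canonicity statements in Theorems \ref{BowditchAccess} and \ref{HHJSJ} and the boundary homeomorphism of Lemma \ref{BulkLemma}\ref{Rel1end}), that the type of each matched vertex group is preserved, that the induced peripheral structures on relatively $1$--ended and rigid vertex groups really are $1$--ended and virtually abelian (so that Lemma \ref{BulkLemma} applies to them), and that the quasi-isometric partner furnished by Lemma \ref{BulkLemma} satisfies exactly the hypotheses required to invoke the inductive hypothesis.
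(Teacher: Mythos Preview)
Your proposal is correct and follows essentially the same strategy as the paper: take $\Per'$ from Lemma \ref{RelVAb}, take $\mathcal H'$ to be the BHH--JSJ hierarchy of $\lp G',\Per'\rp$, invoke Theorem \ref{MakeGood} to replace the given hierarchy of $G$ by its BHH--JSJ hierarchy, and then use Lemma \ref{BulkLemma} parts \ref{QI1endedVertexGroups} and \ref{QIRigidVertexGroups} to match vertex groups level by level and induct. The only organizational difference is that the paper normalizes the hierarchies so that even depths carry finite-edge splittings and odd depths carry infinite-elementary splittings (inserting a trivial depth-$0$ splitting when $G$ is already relatively $1$--ended), whereas you branch at each step on whether the boundary is connected; these are equivalent bookkeeping choices, and your explicit treatment of the base case (quasi-isometric rigidity of the three terminal types) fills in detail the paper leaves implicit.
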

\begin{proof}
Lemma \ref{RelVAb} gives us the peripheral structure $\Per'$. Now using Theorem \ref{MakeGood}, replace $\mathcal H$ with the BHH--JSJ hierarchy so that $\mathcal H$ is still finite, and terminates in virtually free, virtually abelian, or virtually Fuchsian groups, but each splitting is a JSJ splitting. Let $\mathcal H'$ be the BHH--JSJ hierarchy for $\lp G',\Per'\rp$. For simplicity, if $G$ (and therefore $G'$) are relatively $1$--ended, we consider $\mathcal H$ and $\mathcal{H}'$ to have a trivial splitting at depth 0, and then a splitting over infinite elementary groups at depth 1, so that groups at even depth split over finite subgroups, and groups at odd depths split over infinite elementary subgroups.

Lemma \ref{BulkLemma}\ref{QI1endedVertexGroups} shows that the quasi-isometry types of relatively $1$--ended vertex groups of the tree at depth 0 of $\mathcal H$ and $\mathcal H'$ are the same. 

Let $H$ be a vertex group of the splitting of $G$ with the induced peripheral structure $\mathcal O$ so that $H$ is relatively $1$--ended. By the previous paragraph, there is a quasi-isometric vertex group $H'$ at depth 1 of $\mathcal H'$. By Lemma \ref{BulkLemma}\ref{QIRigidVertexGroups} all of the rigid groups of the JSJ of $H'$ are quasi-isometric to a rigid group in the JSJ of $H$. This holds for every vertex group that labels a non-terminal vertex of depth 1, so the set of quasi-isometry types of rigid vertex groups of the JSJ splittings at depth 1 of $\mathcal H$ is the same as the set of quasi-isometry types of rigid vertex groups of the JSJ splittings at depth 1 of $\mathcal H'$. Since all other vertex groups in the JSJ trees are terminal, the result follows from induction. 
\end{proof}

\section{Separability}\label{SectionSeparability}

The goal of this section is to prove Theorem \ref{ATEHisLerf};  a group $G\in \mathcal{ATEH}$ is LERF. We prove this in two parts. First we show that $G$ is locally relatively quasi-convex. Then we show that $G$ virtually embeds in a virtually special group. An application of a separability result of Sageev and Wise and standard arguments about quasi-convexity and separability shows the result. 

The first lemma follows by adapting the work of Dahmani in \cite{Dahmani}. Similarly, the work of Bigdely and Wise in \cite{BigdelyWiseLQC} could have been adapted to avoid the restriction to small terminal vertex groups. As the boundary is considered throughout this paper, an adaptation of the work of Dahmani seemed more appropriate. 

\begin{theorem}\label{lrqc}
Suppose that $G\in\mathcal{ATEH}$. Then $G$ is locally relatively quasi-convex.
\end{theorem}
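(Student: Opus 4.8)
The plan is to induct on the length of the BHH--JSJ hierarchy $\mathcal H$ of $G$, which is finite by Theorem~\ref{MakeGood}, using a combination theorem for relative quasi-convexity of the type proved by Dahmani. Let $K\le G$ be an arbitrary finitely generated subgroup; I want to show $K$ is relatively quasi-convex in $(G,\Per)$. The base case is when $\mathcal H$ has length $0$, so $G$ is virtually free, virtually abelian, or virtually Fuchsian. In the virtually abelian case the peripheral structure is $\{G\}$ (or trivial) and every subgroup is trivially relatively quasi-convex; in the virtually free and virtually Fuchsian cases $G$ is hyperbolic (possibly with a Fuchsian-type peripheral structure that can be absorbed), and finitely generated subgroups of these are quasi-convex/relatively quasi-convex by classical facts (LERF and quasi-convexity of surface and free groups, or directly from the fact that these are geometrically finite in the appropriate sense). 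So the terminal groups are locally relatively quasi-convex.

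For the inductive step, write $G=\pi_1(\Delta_G)$ where $\Delta_G$ is the splitting at the root of the BHH--JSJ hierarchy (a Bowditch splitting over finite groups, or a Haulmark--Hruska JSJ over elementary subgroups relative to $\Per$), and let $T$ be the associated Bass--Serre tree. Each vertex group $(H,\mathcal O_H)$ carries the induced peripheral structure and sits at the root of a strictly shorter hierarchy, so by induction each $(H,\mathcal O_H)$ is locally relatively quasi-convex; moreover $(H,\mathcal O_H)$ is relatively quasi-convex in $(G,\Per)$ by \cite{HHCanonSplittings}*{Theorem 5.2}, and edge groups are elementary hence relatively quasi-convex. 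Now act with $K$ on $T$: the minimal $K$-invariant subtree gives a splitting of $K$ as a graph of groups whose vertex groups are (conjugates into) subgroups $K\cap H^g$ and whose edge groups are $K\cap G_e^g$. The key point is that each $K\cap H^g$ is finitely generated --- this needs an accessibility/finiteness argument (for splittings over finite groups one uses Dunwoody accessibility of the finitely presented group $K$, or rather of $G$, together with the structure of $T$; for the elementary JSJ one uses that $K$ is finitely generated acting on a tree with elementary, hence "controlled", edge stabilizers so that only finitely many orbits of edges meet the minimal subtree and vertex intersections are finitely generated). Given finite generation, each $K\cap H^g$ is relatively quasi-convex in $(H^g,\mathcal O_{H^g})$ by induction, hence relatively quasi-convex in $(G,\Per)$; the edge intersections $K\cap G_e^g$ are elementary, hence relatively quasi-convex. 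One then feeds this into the Dahmani combination theorem \cite{Dahmani} (in the form used for his proof that limit groups are locally relatively quasi-convex): a group acting cocompactly on a tree with relatively quasi-convex vertex and edge stabilizers, with edge groups "fully quasi-convex"/elementary so the necessary malnormality-type hypotheses hold, is itself relatively quasi-convex. This yields that $K$ is relatively quasi-convex in $(G,\Per)$, completing the induction.

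The main obstacle I expect is the finite-generation step for the vertex intersections $K\cap H^g$ and the verification of the precise hypotheses of Dahmani's combination theorem --- in particular that the edge groups embed in the vertex groups in a way that is compatible with the peripheral structures (so that $2$--ended edge groups are either peripheral or hyperbolic and behave well), and handling the subtlety that the peripheral structure on a vertex group has had its finite and $2$--ended elements removed. This is exactly the place where the hypothesis that terminal groups are small (virtually free, virtually abelian, virtually Fuchsian) is used, since it lets the induction bottom out in groups where relative quasi-convexity of finitely generated subgroups is already known, and it is why the remark in the paper notes that Bigdely--Wise \cite{BigdelyWiseLQC} would be needed to drop the smallness restriction. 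I would carry out the bookkeeping of the induced peripheral structures carefully, invoking Lemma~\ref{ThrowOut2ended} to justify removing $2$--ended peripherals at each stage, and cite \cite{HruskaDistortion}*{Theorem 9.1} for relative hyperbolicity of the intersections, as is already done elsewhere in the paper.
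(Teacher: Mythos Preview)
Your overall strategy---induct down the hierarchy and apply Dahmani's combination theorem at each stage---is exactly the approach the paper takes, and you have honestly identified the two places where real work is needed. The paper resolves them as follows, and you should incorporate these points.

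For the finite-generation step, the paper's argument is cleaner than the accessibility route you sketch: the edge groups are elementary, hence virtually abelian, hence \emph{Noetherian}---every subgroup is finitely generated. So the edge stabilizers of the $K$-action on $T$ are automatically finitely generated; since $K$ is finitely generated the minimal subtree has finite quotient, and then Bass--Serre theory gives that the vertex stabilizers are finitely generated. No accessibility result is needed.

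The more substantial point you flag---verifying Dahmani's hypotheses---is where the paper does something you have not anticipated. When the edge group $C$ is finite or non-parabolic $2$--ended, $C$ is almost malnormal and fully quasi-convex in each vertex group, and Dahmani's Case~(1) applies directly. But when $C$ is parabolic and \emph{not} maximal parabolic, Dahmani's theorem does not apply as stated. The paper handles this by rewriting the splitting: if $P$ is the maximal parabolic containing $C$ and (say) $P\le A$, one replaces $A*_C B$ by $A*_P(P*_{C_B}B)$ with $C_B=B\cap P$, so that the outer amalgam is over a maximal parabolic (Dahmani's Case~(1)) and the inner one is the ``extension of a parabolic'' situation (Dahmani's Case~(2)). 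A similar but more delicate rewriting is carried out for HNN extensions, using that the splitting is relative to $\Per$ to force one of the two attaching subgroups to already be maximal parabolic in the base. This rewriting trick is the missing ingredient in your proposal; without it the induction does not go through for parabolic edge groups.
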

\begin{proof}
We follow the proof of \cite{Dahmani}*{Proposition 4.6} using elements of the proof of \cite{Dahmani}*{Theorem 0.1}. We use induction on the number of edges in a splitting and depth of the hierarchy. The result holds for free, abelian, and surface groups. Since quasi-convexity is invariant under commensurability, the result holds for virtually free, virtually abelian, and virtually Fuchsian groups.

Now, suppose that $G=A*_C B$ with $A,B$ locally relatively quasi-convex and $C$ elementary. Let $T$ be the Bass-Serre tree for this splitting. If $C$ is finite, or non-parabolic virtually cyclic, then $C$ is almost malnormal in $A$ and $B$, so $T$ is acylindrical. Furthermore, $C$ is either finite, or non-peripheral $2$--ended, so the intersection $C\cap gPg^{-1}$ is finite for all $g\in G$ and $P\in\Per$. Therefore $C$ is fully quasi-convex in $A$ and $B$. Let $H$ be a finitely generated subgroup of $G$. Since every subgroup of $C$ is finitely generated, $H$ acts on $T$ with finitely generated edge stabilizers, so the vertex stabilizers of the $H$ action on $T$ must also be finitely generated. Therefore, the  intersection of $H$ with each conjugate of $A$ or $B$ is finitely generated, so by the inductive hypothesis, $H\cap hAh^{-1}$ is relatively quasi-convex in $A$ and $H\cap hBh^{-1}$ is relatively quasi-convex in $B$ for all $h\in H$. Letting $T'$ be the minimal $H$ invariant subtree of $T$, then $H\backslash T'$ is a finite splitting of $H$ with finitely generated edge and vertex groups. Since the vertex groups of $H\backslash T'$ are relatively quasi-convex in the conjugates of $A$ and $B$, the boundary of each vertex group embeds in a translate of $\partial A$ or $\partial B$ in $\partial G$. Since $T'$ embeds in $T$, Dahmani's construction of the boundary $\partial H$ from the splitting $H\backslash T'$ shows that $\partial H$ embeds in $\partial G$ as the limit set of $H$. Therefore, $H$ acts as a geometrically finite group on its limit set, and is therefore relatively quasi-convex in $G$. 

In the case that $C$ is peripheral, let $P$ be the maximal peripheral group containing $C$. Let $C_A=A\cap P$ and $C_B=B\cap P$. Since we assume splittings to be relative to $\Per$, either $C_A=P$ or $C_B=P$. Without loss of generality, we can assume $C_A=P$ and write the splitting as $G= A*_P\lp P*_{C_B} B\rp$. Note that the boundary of $B'=P*_{C_B}B$ is built by Dahmani in Case (2) of \cite{Dahmani}*{Theorem 0.1}, and from that the boundary of $A*_P B'$ is built in Case (1) since the subgroup $P$ is maximal peripheral in $A$ and $B'$. If $H$ is a finitely generated subgroup, since $B$ is locally relatively quasi-convex, $H\cap B'$ is relatively quasi-convex using the construction of the boundary of $B'$, and so $H$ is relatively quasi-convex in $G$ using the construction of the boundary of $A*_P B'=G$. 

Finally, in the case that $G=A*_{tC_1t^{-1}=C_2}$, if $C_1$ and $C_2$ are finite or non-parabolic $2$--ended, $C$ is again almost malnormal and fully quasi-convex in $A$, and the proof follows as before using the construction of the boundary in Case (1) of \cite{Dahmani}*{Theorem 0.1}. Suppose $C_1$ is infinite parabolic. Let $P_i$ be the maximal parabolic subgroup of $G$ containing $C_i$, and note that since $C_2\subset P_1\cap tP_2t^{-1}$, by almost malnormality of the peripheral structure on $G$, $P_2=tP_1t^{-1}$. Now, the peripheral structure on $A$ is given by $gPg^{-1}\cap A$ for $g\in G$ and $P\in \Per$, so we note that $C_1$ and $C_2$ are infinite peripheral in $A$, and therefore relatively quasi-convex. 

Let $C_1\subseteq D_1$ and $C_2\subseteq D_2$ where each $D_i$ is maximal peripheral in $A$. Suppose that $d_1\in D_1\setminus C_1$ and $d_2\in D_2\setminus C_2$. Then $td_1t^{-1}\in P_2$ and stabilizes the vertex $tA$ in the Bass-Serre tree corresponding to the splitting. Additionally, since $td_1t^{-1}\not\in C_2$ it does not stabilize the edge from $A$ to $tA$. Similarly, $d_2\in A\setminus C_2$, so it stabilizes the vertex $A$, but not the edge from $A$ to $tA$. Therefore, $P_2$ cannot stabilize a vertex, but we assumed the splitting was relative to $\Per$, so this is a contradiction. Therefore either $C_1$ or $C_2$ is maximal peripheral in $A$. Without loss of generality, suppose that $C_1$ is maximal peripheral in $A$.

Consider the splitting $G=\lp A*_{C_1}D_2\rp*_{D_2}$ where the attaching maps of the HNN extension are given by the identity on $D_2$ and the inclusion into $A$. Letting $A'=A*_{C_1}D_2$, we see that $A'$ is relatively hyperbolic and locally quasi-convex as before by using Case (2) of \cite{Dahmani}*{Theorem 0.1}. Then we can apply Case (1) of \cite{Dahmani}*{Theorem 0.1} again to see that $G$ is locally quasi-convex.
%Now suppose that $C_1$ is not a finite index subgroup of $P_1\cap A$. Let $P_1'$ be a finite index abelian subgroup of $P_1$,  and let $c_1\in\lp P_1'\cap A\rp\setminus C_1$. Suppose also that $C_2$ is not finite index in $P_2\cap A$. Then, letting $P_2'=tP_1't^{-1}$, we note that $P_2'$ is abelian and finite index in $P_2$, and we can find a $c_2\in \lp P_2'\cap A\rp\setminus C_2$. Then we reach a contradiction as $tc_1t^{-1}$ and $c_2$ should commute as they are both in $P_2'$, however this cannot be recovered by the relations in the HNN extension. Therefore either $C_1$ is finite index in $P_1\cap A$ or $C_2$ is finite index in $P_2\cap A$. Without loss of generality, suppose that $C_1$ is finite index in $P_1\cap A$. Let $\hat{P_2}=P_2\cap A$ and consider the splitting $G=\lp A*_{C_2} P_2'\rp*_{P_1'}$, where the attaching map of the HNN extension is given by the identity on $P_1'$ and the image in $A$ identifying $C_1$ with $C_2$. As $C_1$ is finite index in $P_1\cap A$, it is finite index in a maximal parabolic subgroup of $A$ and therefore fully relatively quasiconvex in $A$.    
\end{proof}

Next, we show that each $G\in\mathcal{ATEH}$ virtually embeds in a special group. As special groups have many separability properties, Theorem \ref{ATEHisLerf} follows shortly after. The next lemma allows us to apply results of Wise to groups in $\mathcal{ATEH}$, but first, we need the notion of a strongly sparse special cube complex. The following definitions appear in \cite{WiseQCH}.

\begin{definition}[Strong Quasiflats \cite{WiseQCH}*{Definition 7.18}]
A \emph{strong quasiflat} is a locally finite CAT$\lp 0\rp$ cube complex $\tilde{F}$, with an action on $\tilde{F}$ by a finitely generated virtually abelian group $P$ so that there are finitely many $P$ orbits of hyperplanes, there are finitely many $P$ orbits of pairs of osculating hyperplanes, and there are finitely many $P$ orbits of pairs of intersecting hyperplanes.
\end{definition}

We note as an example that the standard cubulation of $\mathbb{R}^n$ being acted on by $\mathbb{Z}^n$ in the standard way is a strong quasiflat.

\begin{definition}[Strongly Cosparsely \cite{WiseQCH}*{Definition 7.21}]
If $G$ is hyperbolic relative to finitely generated virtually abelian groups $\{P_i\}$, $G$ is said to act \emph{strongly cosparsely} on a CAT$\lp 0\rp$ cube complex $\tilde{X}$ if there is a compact subcomplex $K$, and strong quasiflats $\tilde{F}_i$ so that $P_i$ is the stabilizer of each $\tilde{F}_i$, and \begin{itemize}
    \item $\tilde{X}=GK\cup \bigcup_i G\tilde{F}_i.$
    \item For each $i$, $\lp \tilde{F}_i\cap GK\rp\subseteq P_iK_i$ for some compact $K_i$.
    \item For $i,j$, and $g\in G$, either $\lp \tilde{F}_i\cap g\tilde{F}_j\rp\subseteq GK$ or else $i=j$ and $\tilde{F}_i=g\tilde{F}_j$.
\end{itemize}
If $G$ acts freely and strongly cosparsely on $\tilde{X}$, we say that $X=G\backslash \tilde{X}$ is \emph{strongly sparse}.
\end{definition}

Special cube complexes were introduced in \cite{HaglundWise}. We point the reader to this paper for the definition of special cube complexes. The following is well known, with multiple proofs such as \cite{WiseQCH}*{Example 6.1}.

\begin{lemma}
Suppose that $\Gamma$ is a finite graph. Then the Salvetti complex of the right-angled Artin group associated with $\Gamma$ is compact and special. \end{lemma}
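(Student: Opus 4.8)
The plan is to describe the Salvetti complex explicitly, dispose of compactness, and then verify the four conditions in Haglund and Wise's definition of a special cube complex. Recall that the Salvetti complex $S_\Gamma$ of the right-angled Artin group $A_\Gamma=\langle V\lp\Gamma\rp \mid [a,b]=1 \text{ for } \{a,b\}\in E\lp\Gamma\rp\rangle$ is the non-positively curved cube complex with a single $0$--cell $*$, one oriented loop $e_a$ at $*$ for each vertex $a$ of $\Gamma$, and, for each $k$--clique $\{a_1,\dots,a_k\}$ of $\Gamma$, a single $k$--cube attached along the subcubes determined by its faces; in particular the $2$--cells are the tori with boundary words $e_ae_be_a^{-1}e_b^{-1}$ indexed by the edges of $\Gamma$, and $\pi_1\lp S_\Gamma\rp\cong A_\Gamma$. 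Compactness is then immediate: a finite graph $\Gamma$ has only finitely many cliques, so $S_\Gamma$ has only finitely many cubes and is a finite CW complex. It is convenient also to keep in mind the universal cover $\widetilde{S_\Gamma}$, the CAT$\lp 0\rp$ cube complex with vertex set $A_\Gamma$ and an edge from $g$ to $ga$ for every $g\in A_\Gamma$ and every generator $a$.

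For specialness, observe first that the hyperplanes of $S_\Gamma$ are indexed by the vertices of $\Gamma$: the hyperplane $H_a$ is the one dual to the loop $e_a$, and $e_a$ is its only dual edge; correspondingly, in $\widetilde{S_\Gamma}$ the edges dual to a lift of $H_a$ are among the $a$--labelled edges, and at each vertex $g$ of $\widetilde{S_\Gamma}$ there is exactly one $a$--labelled edge entering $g$ and exactly one leaving $g$. With this in hand I would check the four conditions. \emph{Two-sidedness}: orienting every $a$--edge coherently exhibits the carrier of $H_a$ as a product $H_a\times[-1,1]$. \emph{No self-intersection}: every cube of $S_\Gamma$ comes from a clique of $\Gamma$, whose vertices are pairwise distinct, so no cube has two distinct midcubes of the same type. \emph{No direct self-osculation}: such an osculation would require two distinct $a$--edges with the same initial vertex, or with the same terminal vertex; but, passing to $\widetilde{S_\Gamma}$, each vertex has a unique incoming and a unique outgoing $a$--edge, so this cannot occur (the incoming/outgoing pair at a vertex is only an \emph{indirect} self-osculation, which is permitted). \emph{No inter-osculation}: two hyperplanes $H_a$ and $H_b$ cross in $S_\Gamma$ precisely when $\{a,b\}\in E\lp\Gamma\rp$, i.e.\ when $ab=ba$ in $A_\Gamma$, and an $a$--edge and a $b$--edge sharing a vertex of $\widetilde{S_\Gamma}$ span a square precisely when $ab=ba$ as well; hence if $H_a$ and $H_b$ cross then every pair of an $a$--edge and a $b$--edge meeting at a vertex spans a square, so $H_a$ and $H_b$ never osculate, and if they do not cross there is nothing to verify.

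The only point demanding care is the bookkeeping around osculation, specifically the distinction between direct self-osculation (forbidden) and indirect self-osculation (allowed, and in fact occurring at every vertex between the incoming and outgoing edge of a given label), together with the matching of the hyperplane crossing relation with the edge relation of $\Gamma$. Once these are in place each verification is routine; alternatively, one may simply invoke \cite{WiseQCH}*{Example 6.1} or \cite{HaglundWise}.
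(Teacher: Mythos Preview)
Your verification is correct. The paper itself does not prove this lemma: it simply records the statement as well known and cites \cite{WiseQCH}*{Example 6.1}. Your direct check of compactness and of the four Haglund--Wise conditions is the standard argument underlying that citation, and you even list the citation as an alternative at the end, so in substance you are supplying the details the paper chose to omit. One small remark: the detour through $\widetilde{S_\Gamma}$ in the self-osculation step is not needed, since already in $S_\Gamma$ the single loop $e_a$ has exactly one outgoing and one incoming half-edge at the unique vertex $*$, which immediately rules out direct self-osculation; your argument via lifting is still valid, just slightly roundabout.
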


The following lemma shows that groups in $\mathcal{ATEH}$ have hierarchies terminating in strongly sparse special groups, which allows us to apply \cite{WiseQCH}*{Remark 18.17}.

\begin{lemma}\label{use1817}
Suppose $\lp G,\Per\rp\in\mathcal{ATEH}$. Then $G$ is hyperbolic relative to virtually abelian subgroups, and has a virtually abelian hierarchy terminating in virtually strongly sparse special groups.
\end{lemma}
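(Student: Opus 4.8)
The plan is to take $\mathcal H$ to be the BHH--JSJ hierarchy of $\lp G,\Per\rp$. The first assertion of the lemma is immediate, since $G$ is hyperbolic relative to virtually abelian subgroups by Definition \ref{ATEHDef}(1). Applying Theorem \ref{MakeGood} (after first discarding any $2$--ended members of $\Per$ via Lemma \ref{ThrowOut2ended}, so that the remaining peripheral subgroups are $1$--ended), $\mathcal H$ is finite and terminates in virtually free, virtually abelian, or virtually Fuchsian groups. I would next record that $\mathcal H$ is a virtually abelian hierarchy: every splitting appearing in the BHH--JSJ is over a $2$--ended or a peripheral subgroup, and in the present setting every elementary subgroup is either $2$--ended, hence virtually cyclic, or else conjugate into a virtually abelian peripheral subgroup; in either case it is virtually abelian.

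The remaining work is to check that each of the three types of terminal group is virtually strongly sparse special. For a terminal vertex group $H$ that is virtually free, I would first note that the peripheral structure attached to $H$ in the hierarchy is empty: each of its members has the form $gPg^{-1}\cap H$ with $P$ virtually abelian and $H$ virtually free, hence is virtually cyclic or finite, and such subgroups are deleted when the induced peripheral structures are formed. Thus $H$ is hyperbolic, and a finite--index free subgroup of $H$ acts freely and cocompactly on a tree whose quotient is a finite graph, which is a compact special cube complex; a compact special group is in particular strongly sparse special (with empty collection of strong quasiflats), so $H$ is virtually strongly sparse special.

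For a terminal virtually Fuchsian vertex group $H$, the short exact sequence $1\to F\to H\to\pi_1\lp \Sigma\rp\to 1$ supplied in Section \ref{SectionSplittings} has $\Sigma$ a closed hyperbolic $2$--orbifold, so $H$ is hyperbolic, and it suffices to treat closed hyperbolic surface groups; these act freely and cocompactly on CAT$\lp 0\rp$ cube complexes with compact special quotient by the standard cubulation of surface groups (using \cite{ScottsSurfaceLerf}; see also \cite{WiseQCH}), and so are virtually compact special, in particular virtually strongly sparse special. For a terminal virtually abelian vertex group $H$: if $H$ is finite or $2$--ended it is virtually compact special as in the previous two cases, and otherwise $H$ is $1$--ended and its peripheral structure is $\{H\}$ itself; passing to a finite--index subgroup $\mathbb Z^n\le H$, the group $\mathbb Z^n$ acts freely and strongly cosparsely on the standard cubulation of $\mathbb R^n$ --- the whole complex being the single strong quasiflat, as observed after the definition of strong quasiflat --- with quotient the $n$--torus, which is the Salvetti complex of $\mathbb Z^n$ and hence special. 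Hence every terminal group of $\mathcal H$ is virtually strongly sparse special.

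The step I expect to be the main obstacle is the bookkeeping of the induced peripheral structures on the terminal vertices, and in particular making sure that on a virtually abelian (peripheral) terminal group the strong quasiflat produced above really corresponds to the peripheral subgroup, so that the terminal data of $\mathcal H$ matches precisely the hypotheses needed to invoke \cite{WiseQCH}*{Remark 18.17} afterwards; once the terminal groups are correctly identified together with their peripheral structures, the rest is the case analysis above, reducing to the standard facts that virtually free, surface, and free abelian groups are virtually special with the appropriate cocompactness or sparseness.
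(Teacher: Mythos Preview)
Your proposal is correct and follows essentially the same route as the paper: take the BHH--JSJ hierarchy, observe that all edge groups are virtually abelian, and then verify case by case that virtually free, virtually abelian, and virtually Fuchsian groups are virtually strongly sparse special (with, respectively, empty peripheral structure, the whole group as the single peripheral, and empty peripheral structure).

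The only substantive difference is in the virtually Fuchsian case. The paper does not pass directly to a finite-index closed surface subgroup; instead it works with the short exact sequence $1\to F\to H\to\pi_1\lp\Sigma\rp\to 1$, observes that $\pi_1\lp\Sigma\rp$ is virtually special (surface groups being amalgams of free groups over cyclic subgroups), and then invokes \cite{AgolVH}*{Lemma 2.10} to lift membership in $\mathcal{QVH}$ through the finite-kernel quotient $H\to\pi_1\lp\Sigma\rp$. Your route is more direct, but two small points need tightening: the short exact sequence from Section~\ref{SectionSplittings} is attached only to quadratically hanging vertices of the Haulmark--Hruska JSJ, not to an arbitrary terminal virtually Fuchsian group of the hierarchy, and it does not assert that $\Sigma$ is closed. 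To make your argument self-contained you should simply use the hypothesis that $H$ is virtually Fuchsian, hence has a finite-index Fuchsian subgroup, and observe that such a subgroup is either free (covered by your virtually free case) or a closed hyperbolic surface group; then proceed with the cubulation you describe.
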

\begin{proof}
We know that $G$ is hyperbolic relative to virtually abelian subgroups by the definition of $\mathcal{ATEH}$. The BHH--JSJ hierarchy of $\lp G,\Per\rp$ is finite and over elementary subgroups. Since the peripheral groups are virtually abelian, the BHH--JSJ hierarchy is over virtually abelian groups, so all that needs to be shown is that the terminal vertex groups are strongly sparse special. 

Finitely generated free groups and finitely generated abelian groups are special as they are right-angled Artin groups of a finite graph. Since the peripheral structure on virtually free groups is empty, they are virtually strongly sparse special. The conditions for being strongly sparse are satisfied also when the peripheral subgroup is the entire group, so virtually abelian groups are also virtually strongly sparse special. This leaves only virtually Fuchsian groups.

Let $\mathbf{\mathcal{QVH}}$ be the collection of all virtually special hyperbolic groups. This is equivalent to the definition in \cite{WiseQCH}*{Definition 11.5} by \cite{WiseQCH}*{Theorem 13.3}. Recall that a virtually Fuchsian group $H$ arising as a vertex of the third type in Theorem \ref{HHJSJ} fits into a short exact sequence $$1\rightarrow F\rightarrow H \rightarrow \pi_1\lp \Sigma\rp\rightarrow 1$$ where $\Sigma$ is a hyperbolic orbifold. Hyperbolic surface groups are virtually special as they are amalgamated free products of free groups over cyclic groups. Therefore, $\pi_1\lp \Sigma\rp$ is virtually special, as it has a finite index subgroup which is a hyperbolic surface group.  By \cite{AgolVH}*{Lemma 2.10}, if $\phi:H\rightarrow G$ is a surjective homomorphism with finite fiber and $G\in\mathbf{\mathcal{QVH}}$ then $H\in\mathbf{\mathcal{QVH}}$. Therefore, if $H$ is virtually Fuchsian, $H$ is virtually special. Again, the peripheral structure on the virtually Fuchsian vertex group is trivial, so these terminal vertex groups are virtually strongly sparse special.
\end{proof}

Immediately from \cite{WiseQCH}*{Remark 18.17}, we have:
\begin{corollary}\label{VSpecialNonCompact}
Suppose $\lp G,\Per\rp\in\mathcal{ATEH}$, then $G$ has a finite index subgroup that is the fundamental group of a strongly sparse special cube complex.
\end{corollary}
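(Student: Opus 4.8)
The plan is to quote Wise's machinery essentially verbatim. By Lemma \ref{use1817}, the group $G$ is hyperbolic relative to a collection of virtually abelian subgroups and admits a finite virtually abelian hierarchy --- namely the BHH--JSJ hierarchy of $\lp G,\Per\rp$, which by construction is relative to $\Per$ --- whose terminal vertex groups are all virtually strongly sparse special. This is precisely the input required by \cite{WiseQCH}*{Remark 18.17}, so the corollary should follow immediately by applying that remark.

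First I would record that the hierarchy produced by Lemma \ref{use1817} has all of the structural features demanded by \cite{WiseQCH}*{Remark 18.17}: it is finite, each of its splittings is over elementary (hence virtually abelian) subgroups, and each splitting is relative to the induced peripheral structure, so that every peripheral group acts elliptically at each stage. Then I would invoke the terminal condition established in Lemma \ref{use1817}: every terminal vertex group is virtually free, virtually abelian, or virtually Fuchsian, and in each of these three cases it was already shown to be virtually strongly sparse special. Feeding this data into \cite{WiseQCH}*{Remark 18.17} yields a finite index subgroup $G_0\leq G$ that acts freely and strongly cosparsely on a CAT$\lp 0\rp$ cube complex $\tilde X$, so that $X=G_0\backslash\tilde X$ is a strongly sparse special cube complex with $\pi_1\lp X\rp\cong G_0$, which is exactly the assertion of the corollary.

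The main --- and essentially only --- obstacle is bookkeeping: confirming that the hypotheses of \cite{WiseQCH}*{Remark 18.17} match the conclusion of Lemma \ref{use1817} without any gap. The two points worth a sentence are that a virtually abelian hierarchy in Wise's sense is permitted to have peripheral edge groups (not merely two-ended ones), which holds here because the BHH--JSJ hierarchy is relative to $\Per$ and each $P\in\Per$ acts elliptically throughout, and that it suffices for the terminal groups to be only \emph{virtually} strongly sparse special, which is harmless since the statement concludes only a commensurability-type property. Once this is checked, no further argument is needed.
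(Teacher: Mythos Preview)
Your proposal is correct and matches the paper's approach exactly: the paper states that the corollary follows immediately from \cite{WiseQCH}*{Remark 18.17} after Lemma \ref{use1817}, which is precisely what you do. Your additional bookkeeping remarks are fine but go beyond what the paper records.
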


%reference wise for definition

%%write as corollary to 18.17 that my groups are

\begin{theorem}\label{VirtualEmbedInVirtualSpecial}
If $G\in\mathcal{ATEH}$, then $G$ has a finite index subgroup that embeds in a virtually compact special group.
\end{theorem}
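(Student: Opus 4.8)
The plan is to combine Corollary \ref{VSpecialNonCompact} with the local relative quasi-convexity from Theorem \ref{lrqc}, using a "sparsification" / Dehn filling argument to replace the non-compact strongly sparse special cube complex by a genuinely compact one at finite index. First I would pass to the finite index subgroup $G_0 \le G$ furnished by Corollary \ref{VSpecialNonCompact}, so that $G_0 = \pi_1(X)$ where $X$ is a strongly sparse special cube complex, with $\tilde X$ a CAT$(0)$ cube complex on which $G_0$ acts freely, cocompactly away from a collection of equivariant strong quasiflats $\tilde F_i$ stabilized by the peripheral subgroups $P_i$ (which are virtually abelian). The non-compactness of $X$ comes precisely from these quasiflats; the goal is to ``cap off'' the quasiflats.

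The key steps, in order: (1) Note that $(G_0, \{P_i\})$ is toral-type enough to be amenable to relatively hyperbolic Dehn filling, and that each $P_i$, being finitely generated virtually abelian, has arbitrarily deep finite-index subgroups $\dot P_i \le P_i$ which are themselves free abelian of finite index. (2) Use the fact (Wise, \cite{WiseQCH}, Theorem 16.1 / the sparse special quotient theorem, or its statement in terms of malnormal special quotients) that for sufficiently deep choices of $\dot P_i \trianglelefteq P_i$, the quotient $\bar G_0 = G_0 / \langle\langle \dot P_i \rangle\rangle$ is itself virtually compact special and the images of the peripherals are finite; equivalently, $G_0$ embeds (virtually) as a relatively quasi-convex subgroup of a group acting freely cocompactly on a CAT$(0)$ cube complex. (3) Alternatively, and more in the spirit of the earlier sections: since every finitely generated subgroup of $G$ is relatively quasi-convex by Theorem \ref{lrqc}, each $P_i$ is relatively quasi-convex in $G_0$, hence quasi-isometrically embedded by Lemma \ref{HruskaDistorted}; this lets one apply \cite{WiseQCH}*{Theorem 15.6} (relatively hyperbolic groups with a quasi-convex virtually special hierarchy are virtually compact special) — but to invoke that theorem cleanly one wants the hierarchy to be a quasi-convex virtually special hierarchy in Wise's sense, which is exactly what Lemma \ref{use1817} plus local relative quasi-convexity provide. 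So in practice the cleanest route is: apply Lemma \ref{use1817} to see $G$ has a virtually abelian hierarchy (which is quasi-convex by Theorem \ref{lrqc}) terminating in virtually special groups, then cite Wise's malnormal special quotient / compact special criterion to conclude $G_0$ (hence $G$ virtually) embeds in a virtually compact special group.

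The main obstacle I expect is bookkeeping the peripheral structure through the hierarchy: Wise's machinery for producing a \emph{compact} special quotient requires the hierarchy at each stage to be over quasi-convex subgroups and the peripheral subgroups to be ``fillable'' consistently (no edge group obstructs the filling), so one must check that the induced peripheral structures $\mathcal O_H$ on vertex groups down the BHH--JSJ hierarchy remain virtually abelian and relatively quasi-convex — which they do, by the discussion following Definition \ref{ATEHDef} and by Theorem \ref{lrqc} — and that the virtually Fuchsian and virtually free terminal groups, which have trivial peripheral structure, cause no incompatibility. Once the hierarchy is certified to be a quasi-convex virtually special hierarchy in the sense of \cite{WiseQCH}, the conclusion that a finite-index subgroup embeds in a virtually compact special group is exactly \cite{WiseQCH}*{Theorem 16.7} (or the corresponding statement there), completing the proof.
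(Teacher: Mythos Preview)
Your proposal does not reach the conclusion, and the gap is in both of the routes you sketch.

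For the Dehn filling route: filling the peripherals produces a \emph{quotient} $\bar G_0 = G_0/\langle\!\langle \dot P_i\rangle\!\rangle$, not a supergroup. The sentence ``equivalently, $G_0$ embeds (virtually) as a relatively quasi-convex subgroup of a group acting freely cocompactly on a CAT$(0)$ cube complex'' is simply not what Dehn filling or the malnormal special quotient theorem gives you; there is no embedding of $G_0$ into $\bar G_0$. For the hierarchy route: invoking Wise's hierarchy machinery on a relatively hyperbolic group with virtually abelian peripherals yields exactly what Corollary~\ref{VSpecialNonCompact} already records, namely a \emph{strongly sparse} special structure. That is non-compact in general, and nothing in your argument upgrades ``sparse special'' to ``compact special'' or to ``embeds in compact special''. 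Citing ``Theorem~15.6'' or ``Theorem~16.7'' in \cite{WiseQCH} does not help here: the compact-special conclusions there are for hyperbolic groups, while the relatively hyperbolic statements land you back at sparse. Local relative quasi-convexity (Theorem~\ref{lrqc}) is not used in the paper's proof of this theorem and does not bridge this gap.

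The paper's argument supplies precisely the missing step. Starting from the finite-index $G'=\pi_1(X)$ with $X$ strongly sparse special (Corollary~\ref{VSpecialNonCompact}), it applies \cite{WiseQCH}*{Theorem~7.54}: this is a compactification construction that embeds $G'$ into a group $\hat G=\pi_1(\hat X)$ with $\hat X$ a \emph{compact} nonpositively curved cube complex, essentially by amalgamating along the quasiflat ends. One checks via Dahmani's combination theorem that $\hat G$ remains hyperbolic relative to virtually abelian subgroups. The final ingredient is \cite{oregonvspecial}*{Corollary~1.3} (Oregon-Reyes), which says that a group hyperbolic relative to virtually abelian subgroups acting properly cocompactly on a CAT$(0)$ cube complex is virtually special; this gives that $\hat G$ is virtually compact special. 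Neither Theorem~7.54 nor the Oregon-Reyes result appears in your proposal, and they are the substance of the proof.
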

\begin{proof}
By Corollary \ref{VSpecialNonCompact}, $G$ has a finite index subgroup $G'$ which is the fundamental group of a strongly sparse special cube complex $X$. Note that the induced peripheral structure on $G'$ shows that $G'$ is still hyperbolic relative to virtually abelian subgroups. Therefore, $G'$ embeds in a group $\hat{G}$ which is the fundamental group of a compact non-positively curved cube complex $\hat{X}$ by \cite{WiseQCH}*{Theorem 7.54}. We note that $\hat{G}$ is still hyperbolic relative to virtually abelian groups by the tree structure of the splitting given in \cite{WiseQCH}*{Theorem 7.54} and Case (2) of Dahmani's combination theorem \cite{Dahmani}*{Theorem 0.1}. It is therefore sufficient to show that $\hat{G}$ is virtually special. This follows by \cite{oregonvspecial}*{Corollary 1.3} since $\hat{G}$ is hyperbolic relative to virtually abelian subgroups and acts properly and cocompactly on the universal cover of $\hat{X}$, which is CAT$(0)$.
\end{proof}

\begin{theorem}\label{lerfness}
Suppose that $G\in\mathcal{ATEH}$. Then $G$ is LERF.
\end{theorem}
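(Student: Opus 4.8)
The plan is to combine the two main results of this section with a separability theorem for relatively quasi-convex subgroups of virtually special relatively hyperbolic groups, and then to transfer separability along commensurability and along subgroup inclusions. By Theorem \ref{VirtualEmbedInVirtualSpecial} there is a finite-index subgroup $G'\leq G$ together with an embedding $\iota\colon G'\hookrightarrow\hat G$, where $\hat G$ is virtually compact special and is hyperbolic relative to virtually abelian subgroups. Since $G'$ is quasi-isometric to $G$, Theorem \ref{QIClosed} gives $G'\in\mathcal{ATEH}$; in particular $G'$ is hyperbolic relative to virtually abelian subgroups and, by Theorem \ref{lrqc}, is locally relatively quasi-convex. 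Because subgroup separability passes from a finite-index subgroup to the ambient group, it suffices to show that $G'$ is LERF.

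So let $H\leq G'$ be finitely generated. Then $H$ is relatively quasi-convex in $G'$ by Theorem \ref{lrqc}. From the construction of $\hat G$ in \cite{WiseQCH}*{Theorem 7.54}, the map $\iota$ identifies $G'$ with a relatively quasi-convex subgroup of $\hat G$, and the peripheral structure it induces on $G'$ is the expected one, namely the set of infinite intersections of $G'$ with $\hat G$-conjugates of the peripheral subgroups; since relative quasi-convexity is transitive, $\iota(H)$ is relatively quasi-convex in $\hat G$. Now $\hat G$ acts properly and cocompactly on a CAT$(0)$ cube complex, is virtually special, and is hyperbolic relative to virtually abelian (hence highly separable) subgroups, so its relatively quasi-convex subgroups are separable; this is the separability result of Sageev and Wise \cite{SageevWiseCores}, applied together with \cite{oregonvspecial} and \cite{WiseQCH}. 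Hence $\iota(H)$ is closed in the profinite topology of $\hat G$. Since $G'\leq\hat G$ and the profinite topology of $G'$ refines the subspace topology it inherits from $\hat G$, it follows that $H$ is closed in the profinite topology of $G'$. Thus $G'$, and therefore $G$, is LERF.

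I expect the main obstacle to be the precise application of the separability theorem to $\hat G$ in the relatively hyperbolic setting: one must ensure that the cube-complex machinery yields separability of \emph{relatively} quasi-convex subgroups, not merely quasi-convex subgroups in the hyperbolic sense, which involves controlling how such a subgroup meets the peripheral structure and using that the virtually abelian peripheral subgroups are themselves separable (indeed double-coset separable). A secondary point requiring care is verifying that $\iota$ embeds $G'$ as a relatively quasi-convex subgroup of $\hat G$ with compatible peripheral structures, so that transitivity of relative quasi-convexity can be invoked; the remaining steps — passing LERF through a finite-index subgroup and restricting separability to a subgroup — are routine.
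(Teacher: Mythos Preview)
Your proposal is correct and follows essentially the same route as the paper: reduce to separating relatively quasi-convex subgroups via Theorem~\ref{lrqc}, push them into the virtually compact special group $\hat G$ from Theorem~\ref{VirtualEmbedInVirtualSpecial} using transitivity of relative quasi-convexity, invoke Sageev--Wise, and pull separability back through finite index and subgroup inclusion. The only notable difference is bookkeeping: the paper starts with $H\leq G$ and chases $H\cap G'$ down, explicitly passing to a finite-index \emph{special} subgroup $\tilde G\leq\hat G$ before applying \cite{SageevWiseCores}*{Corollary 6.4}, whereas you first reduce to proving $G'$ is LERF and then appeal to Sageev--Wise at the level of $\hat G$; your version would be sharpened by making that same pass to $\tilde G$ explicit, which is exactly the subtlety you flag in your closing paragraph.
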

\begin{proof}
First, by Lemma \ref{lrqc}, $G$ is locally relatively quasi-convex, so it is sufficient to show that for any relatively quasi-convex subgroup $H$ of $G$ that $H$ is separable.

Let $G'$ and $\hat{G}$ be as in the proof of Theorem \ref{VirtualEmbedInVirtualSpecial}. Let $\tilde{G}$ be a finite index special subgroup of $\hat{G}$. Since $G'$ is relatively quasi-convex in $G$, $H'=G'\cap H$ is relatively quasi-convex in $G'$. Note that $G'$ is relatively quasi-convex in $\hat{G}$ by Dahmani's combination theorem, so $H'$ is also relatively quasi-convex in $\hat{G}$. Let $\tilde{H}=H'\cap\tilde{G}$. $\tilde{G}$ is hyperbolic relative to virtually abelian subgroups. Each subgroup of a virtually abelian group $P$ is separable in $P$, so we can apply \cite{SageevWiseCores}*{Corollary 6.4} to see that $\tilde{H}$ is separable in $\tilde{G}$.  

Now, since $\tilde{H}$ is closed in the profinite topology of $\tilde{G}$, we know that it is closed in the profinite topology of $\hat{G}$. Since $\tilde{G}$ is finite index in $\hat{G}$, $\hat H$ is the finite union of translates of $\tilde{H}$, so that $\hat{H}$ is also closed in the profinite topology of $\hat{G}$ as it is the finite union of closed sets. Writing $\hat{H}=\bigcap K_i$ where each $K_i$ is finite index in $\hat{G}$ shows each $K_i\cap G'$ is finite index in $G'$, which shows that $H'=\bigcap\lp K_i\cap G'\rp$, so that $H'$ is separable in $G'$. 
\end{proof}

The next two results are meant to strengthen the comparison between groups in $\mathcal{ATEH}$ and virtual limit groups.

\begin{lemma}\label{GisVTF}
Suppose that $G\in \mathcal{ATEH}$. Then $G$ is virtually torsion-free.
\end{lemma}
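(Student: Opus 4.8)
The plan is to induct on the length of the BHH--JSJ hierarchy of $G$, using Theorem \ref{MakeGood} to know this hierarchy is finite and terminates in virtually free, virtually abelian, or virtually Fuchsian groups. For the base case, each of those three classes of groups is virtually torsion-free: virtually free and virtually abelian groups obviously contain a torsion-free finite-index subgroup, and virtually Fuchsian groups do as well since Fuchsian groups (being discrete subgroups of $\mathrm{PSL}_2(\mathbb{R})$, or using Selberg's lemma) are virtually torsion-free. For the inductive step, suppose $G$ (with its peripheral structure $\Per$) sits at a non-terminal vertex of the hierarchy, so $G$ splits as a finite graph of groups over elementary (hence virtually abelian, in particular virtually cyclic or peripheral) edge groups, with each vertex group $H_v$ carrying its induced peripheral structure and being, by induction, virtually torsion-free; we want to conclude $G$ is virtually torsion-free.

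The key tool is that $G$ is LERF by Theorem \ref{lerfness} (equivalently Theorem \ref{ATEHisLerf}), so in particular $G$ is residually finite, and moreover every finitely generated — hence every relatively quasi-convex — subgroup is separable. The standard approach to promoting "all vertex groups virtually torsion-free" to "$G$ virtually torsion-free" for a graph of groups is: first bound the torsion. Since the splitting is over virtually abelian (elementary) edge groups and relatively hyperbolic groups have only finitely many conjugacy classes of finite subgroups, the graph of groups has boundedly many conjugacy classes of finite subgroups, and (because all the vertex groups are virtually torsion-free and the graph is finite) there is a uniform bound on the order of finite subgroups of $G$. Then I would use separability: for each vertex group $H_v$, a finite-index torsion-free subgroup is relatively quasi-convex in $G$ (the vertex group is relatively quasi-convex by \cite{HHCanonSplittings}*{Theorem 5.2}, and finite-index subgroups of relatively quasi-convex subgroups are relatively quasi-convex), hence separable in $G$ by LERFness. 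Intersecting the finitely many finite-index subgroups of $G$ obtained by separating each torsion element of each vertex group away from the relevant torsion-free finite-index subgroup, and further intersecting with finite-index normal subgroups killing the finitely many conjugacy classes of torsion elements of $G$, produces a finite-index subgroup of $G$ that meets no nontrivial finite subgroup — that is, a torsion-free finite-index subgroup.

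I expect the main obstacle to be making the "uniform bound on torsion" and "finitely many conjugacy classes of finite subgroups in $G$" steps rigorous for the graph-of-groups decomposition, and then assembling the finitely many separating finite-index subgroups so that their intersection actually kills all torsion simultaneously rather than just torsion lying in a single conjugate of a single vertex group. This is essentially the classical argument that a residually finite group acting on a tree with torsion-free-by-finite vertex stabilizers and finite (bounded) edge stabilizers is virtually torsion-free, combined with separability of the vertex groups; the relatively hyperbolic setting is what guarantees the needed finiteness (finitely many conjugacy classes of finite subgroups, by \cite{OsinRH}), and LERFness is what upgrades residual finiteness of $G$ to separability of the relevant finite-index subgroups of vertex groups. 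A cleaner alternative, which I would mention, is to invoke that $G$ virtually embeds in a virtually compact special group by Theorem \ref{VirtualEmbedInVirtualSpecial}: a finite-index subgroup $G'$ of $G$ embeds in $\hat G$, which has a finite-index special subgroup $\tilde G$; special groups are torsion-free (they embed in right-angled Artin groups), so $G' \cap \tilde G$ is a torsion-free finite-index subgroup of $G$, giving the result at once.
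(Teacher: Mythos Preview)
Your ``cleaner alternative'' at the end is essentially the paper's proof, and is the right one to lead with. The paper invokes Corollary \ref{VSpecialNonCompact} (i.e.\ Wise's Remark 18.17) directly rather than Theorem \ref{VirtualEmbedInVirtualSpecial}: a finite-index subgroup $G'$ is the fundamental group of a strongly sparse special cube complex, hence acts properly on a finite-dimensional CAT$(0)$ cube complex, so $G'$ has finite cohomological dimension and is torsion-free. Your variant---pass to a special finite-index subgroup of $\hat G$ and use that special groups embed in right-angled Artin groups, which are torsion-free---reaches the same conclusion through the same machinery with a slightly different last line; either endgame is fine.

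Your main inductive argument is a genuinely different route and would also work, but it is heavier than necessary and the write-up obscures the actual mechanism. You do not need the hierarchy or separability of torsion-free finite-index subgroups of vertex groups at all: once you know (i) $G$ is relatively hyperbolic with virtually abelian peripherals, hence has only finitely many conjugacy classes of finite-order elements (Osin), and (ii) $G$ is residually finite (immediate from Theorem \ref{lerfness}), you are done---for each of the finitely many representatives of nontrivial torsion, choose a finite-index normal subgroup missing it, and intersect. The graph-of-groups bookkeeping, the separability of relatively quasi-convex subgroups, and the induction on the BHH--JSJ are all unnecessary detours. So your proposal is correct, but the part you flag as the ``main obstacle'' is not an obstacle: it dissolves once you notice that finiteness of conjugacy classes of torsion holds globally for $(G,\Per)$, not just vertex-by-vertex. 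The paper sidesteps all of this by going straight through the special cube complex.
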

\begin{proof}
Since $G\in\mathcal{ATEH}$, by \cite{WiseQCH}*{Remark 18.17}, we know that $G$ has a finite index subgroup $G'$ that is the fundamental group of a sparse special cube complex. Therefore, $G'$ acts properly discontinuously on the universal cover of this cube complex, which is a CAT$(0)$ space and is aspherical, so $G'$ has finite cohomological dimension, and is torsion-free.
\end{proof}

\begin{lemma}\label{GisVHRelAb}
Suppose that $G\in \mathcal{ATEH}$. Then $G$ has a finite index subgroup which intersects conjugates of elements of $\Per$ in abelian subgroups, and so is hyperbolic relative to abelian subgroups.
\end{lemma}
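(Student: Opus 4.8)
The plan is to produce a finite-index \emph{normal} subgroup $G_0 \trianglelefteq G$ with the property that $G_0 \cap P$ is abelian for every $P \in \Per$. Normality then immediately upgrades this to all conjugates: for $g \in G$ and $P \in \Per$ we have $G_0 \cap gPg^{-1} = g(G_0 \cap P)g^{-1}$, a conjugate of an abelian group, hence abelian; and it is infinite, since $G_0\cap P$ has finite index in the infinite group $P$. Finally, by standard facts about finite-index subgroups of relatively hyperbolic groups \cite{OsinRH}, $(G_0,\mathcal Q)$ is relatively hyperbolic, where $\mathcal Q$ is the (finite) set of infinite subgroups of the form $G_0 \cap gPg^{-1}$ up to $G_0$-conjugacy; by the above every element of $\mathcal Q$ is abelian, so $G_0$ is hyperbolic relative to abelian subgroups.

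To construct $G_0$, write $\Per = \{P_1,\dots,P_n\}$. For each $i$, fix a finite-index abelian subgroup $A_i \leq P_i$; this exists because $P_i$ is virtually abelian, and $A_i$ is finitely generated since it has finite index in the finitely generated group $P_i$. By Theorem \ref{lerfness}, $G$ is LERF, so $A_i$ is separable in $G$, i.e. $A_i$ is the intersection of the finite-index subgroups of $G$ containing it. Now I use that $A_i$ has finite index in $P_i$: the poset of subgroups of $P_i$ containing $A_i$ is finite, so the family $\mathcal C_i = \{\,K \cap P_i : A_i \leq K \leq G,\ [G:K] < \infty\,\}$ is a finite family of subgroups, and it is closed under pairwise intersection because $(K\cap K')\cap P_i = (K\cap P_i)\cap(K'\cap P_i)$ and $K\cap K'$ is again finite-index containing $A_i$. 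Hence $\mathcal C_i$ has a least element, namely $\bigcap_K (K\cap P_i) = \big(\bigcap_K K\big)\cap P_i = A_i$ by separability. Therefore there is a single finite-index subgroup $K_i \leq G$ with $K_i \cap P_i = A_i$.

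Let $L = \bigcap_{i=1}^n K_i$, which has finite index in $G$, and let $G_0 = \operatorname{Core}_G(L) = \bigcap_{g\in G} gLg^{-1}$, which is normal in $G$ and of finite index (its index divides $[G:L]!$). Then $G_0 \leq K_i$ and so $G_0 \cap P_i \leq K_i \cap P_i = A_i$ is abelian for every $i$, which is exactly what was needed to run the first paragraph.

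The one point requiring care — the ``main obstacle,'' such as it is — is the passage from separability of $A_i$ in $G$ (an a priori infinite intersection of finite-index subgroups) to a \emph{single} finite-index $K_i$ with $K_i \cap P_i = A_i$; this is precisely where finiteness of the subgroup lattice between $A_i$ and $P_i$ is used. I would also remark, to motivate why LERF is invoked rather than just Lemma \ref{GisVTF}, that simply passing to a torsion-free finite-index subgroup is not enough: a torsion-free virtually abelian group (for instance the Klein bottle group) need not be abelian, so genuine separability input is required.
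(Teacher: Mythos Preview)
Your proof is correct and follows essentially the same strategy as the paper: establish separability of a finite-index abelian subgroup $A_i\le P_i$, use that $[P_i:A_i]<\infty$ to extract a \emph{single} finite-index subgroup of $G$ meeting $P_i$ in $A_i$, then intersect over $i$ and pass to the normal core. The only cosmetic difference is in the first step: the paper first reduces to the torsion-free case via Lemma~\ref{GisVTF}, takes $A_i$ to be a \emph{maximal} finite-index abelian subgroup, and gives an ad~hoc separability argument (if $a\notin A_i$ then $[a,p]\neq 1$ for some $p\in A_i$, and residual finiteness separates), whereas you simply invoke Theorem~\ref{lerfness} directly---which is cleaner and avoids the maximality discussion entirely.
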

\begin{proof}
Using Lemma \ref{GisVTF}, we may assume that $G$ is torsion-free, and in particular this gives that $\Per$ is a malnormal collection of virtually abelian groups. Suppose $\Per=\{P_1,...,P_n\}$. For each $i\in \{1,...,n\}$ we construct a $G_i$ which is finite index in $G$ and so that $G_i\cap P_i$ is abelian. Note that each $P_i$ is assumed to be $1$--ended, and that every non-cyclic abelian subgroup of $G$ must be contained in a peripheral subgroup.

Take $P_i'$ to be a maximal finite index abelian subgroup of $P_i$ for each $i$. First, we claim that each $P_i'$ is separable in $G$. Indeed, suppose that $a\in G\setminus P_i'$, so that $\lb a,p\rb\neq 1$ for some $p\in P_i'$. Then, as $G$ is residually finite, there is a homomorphism $h:G\rightarrow Q$ where $Q$ is finite, and $h\lp \lb p,a\rb\rp\neq 1$. Since $h\lp A\rp$ is abelian, we know that $h\lp a\rp\not\in h\lp P_i'\rp$. Therefore, $P_i'$ is separable in $G$. 

Now, let $\{1,k_1^i...,k^i_{m_i}\}$ be a set of coset representatives of $P_i'$ in $P_i$. Using the fact that $G$ is LERF, for each $j\in\{1,...,m_i\}$ take $G_i^j$ to be a finite index subgroup of $G$ containing $P_i'$ but not $k_j^i$. Then $G_i=\bigcap_{j=1}^{m_i} G_i^j$ is finite index in $G$ and contains $P_i'$. Moreover, $G_i\cap P_i=P_i'$. 

Lastly, taking $G'=\bigcap_{i=1}^{n} G_i$ we get a finite index subgroup of $G$ so that $P_i\cap G'$ is abelian for each $i$. Taking $G''$ to be a finite index normal subgroup of $G$ contained in $G'$, $G''\cap P_i\subset P_i'$ for all $i$. Furthermore, since $G''$ is normal in $G$, for each $g\in G$, $G''\cap gP_ig^{-1}=g\lp G''\cap P_i\rp g^{-1}\subseteq g\lp G'\cap P_i'\rp g^{-1}$, which is abelian, so that $G''$ is hyperbolic relative to the collection of infinite conjugates of $\{G''\cap P_i'\}_{i=1}^n$. 
\end{proof}

For any $G\in\mathcal{ATEH}$, by taking the intersection of the finite index torsion-free subgroup given by Lemma \ref{GisVTF} and the finite index subgroup in Lemma \ref{GisVHRelAb}, we get a finite index toral relatively hyperbolic subgroup of $G$. Since this finite index subgroup has the same Bowditch boundary as $G$, the BHH--JSJ hierarchy is finite and terminates in free, abelian, or hyperbolic orbifold groups. The following is immediate.

\ThmATEHisVTEH*

\section{Cyclic Hierarchies}\label{SectionCyclicHierarchies}

The final section of this paper is spent showing that groups in $\mathcal{ATEH}$ also have finite virtually cyclic hierarchies. First, we show that non-terminal vertex groups in the given hierarchy split over virtually cyclic groups. Examining the boundary allows us to show that collapsing all $1$--ended edge groups in the JSJ given by Theorem \ref{HHJSJ} is a JSJ in the class of splittings over $2$--ended subgroups relative to $\Per$. This allows us to apply a result of Louder and Touikan to see that the BHH--JSJ hierarchy is still finite if the $1$--ended edge groups are all collapsed.

\begin{lemma}\label{SplitsCyclically}
Let $\lp G, \Per\rp\in\mathcal{ATEH}$ and suppose that $G$ is not virtually abelian, virtually free, or virtually Fuchsian. Then $G$ splits over a finite or virtually cyclic subgroup.
\end{lemma}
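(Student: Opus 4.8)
The plan is to run an induction on the length of a hierarchy witnessing $\lp G,\Per\rp\in\mathcal{ATEH}$, and at each step to read off a splitting from the BHH--JSJ hierarchy, which is available and finite by Theorem \ref{MakeGood}. I would strengthen the statement slightly: show that $G$ splits over a finite or virtually cyclic subgroup \emph{relative to} $\Per$ (this stronger form is what makes the inductive step go through). If $\partial\lp G,\Per\rp$ is disconnected, Theorem \ref{BowditchConnectedness} already gives a non-trivial splitting of $G$ over a finite subgroup relative to $\Per$, and we are done. So assume $\partial\lp G,\Per\rp$ is connected; by Theorem \ref{ConnectedGivesLocallyConnected} it is locally connected as well, so Theorem \ref{HHJSJ} supplies the canonical JSJ tree $T\lp M\rp$ over elementary subgroups relative to $\Per$. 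The first thing to check is that $T\lp M\rp$ is non-trivial: otherwise $G$ is a single vertex group, and each of the four types forces a contradiction --- peripheral, non-parabolic $2$--ended, and quadratically hanging with finite fiber make $G$ virtually abelian, virtually free, or (via the lemma identifying third--type vertices) virtually free or virtually Fuchsian, while if $G$ is rigid then $G$ is given a trivial splitting in its own BHH--JSJ, hence is terminal there, hence is virtually free/abelian/Fuchsian by Theorem \ref{MakeGood}; all of these are excluded by hypothesis.

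Now the edge groups of $T\lp M\rp$ are elementary, so each is $2$--ended or infinite parabolic, and since the peripheral subgroups are $1$--ended the infinite parabolic ones are virtually $\mathbb Z^{k}$ with $k\ge 2$. If some edge group is finite or $2$--ended we are done, $G$ splitting over a virtually cyclic subgroup relative to $\Per$. So assume \emph{every} edge group of $T\lp M\rp$ is infinite parabolic. Using the classification of vertex types one eliminates non-parabolic $2$--ended vertices and quadratically hanging vertices: a non-parabolic $2$--ended vertex has incident edge groups that are subgroups of a $2$--ended group, and a quadratically hanging vertex has incident edge groups that are fibers over boundary curves of a hyperbolic orbifold, hence $2$--ended; in a minimal non-trivial $T\lp M\rp$ whose edge groups are all virtually $\mathbb Z^{\ge 2}$ neither can occur. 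Hence every vertex group of $T\lp M\rp$ is peripheral (virtually abelian) or rigid, and every rigid vertex group $H$ --- having an incident edge group --- contains a copy of $\mathbb Z^{2}$.

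For such a rigid $H$, equip it with its induced peripheral structure $\mathcal O_{H}$; then $\lp H,\mathcal O_{H}\rp$ is relatively hyperbolic (Lemma \ref{ThrowOut2ended}), lies in $\mathcal{ATEH}$, and, by the argument of Theorem \ref{MakeGood}, inherits a strictly shorter witnessing hierarchy from that of $G$. If some such $H$ fails to be virtually abelian, then it is not virtually free or virtually Fuchsian either (it contains $\mathbb Z^{2}$), so the inductive hypothesis applies and $H$ splits over a finite or virtually cyclic subgroup $C$ relative to $\mathcal O_{H}$; since the $T\lp M\rp$--edge groups incident to $H$ lie in members of $\mathcal O_{H}$ they act elliptically on this splitting, so one refines $T\lp M\rp$ at $H$ by it, and --- after collapsing the remaining edges --- obtains a non-trivial splitting of $G$ over $C$ relative to $\Per$. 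The only remaining possibility is that every vertex group of $T\lp M\rp$ is virtually abelian; then each (infinite) vertex group contains $\mathbb Z^{2}$ and is therefore conjugate into a peripheral subgroup of $G$ (see \cite{OsinRH}), adjacent ones into the \emph{same} peripheral by almost malnormality of $\Per$ (their intersection is infinite), and self-commensuration of peripheral subgroups forbids HNN edges and cycles in the underlying graph; hence $G$ is contained in a peripheral subgroup and is virtually abelian, contrary to hypothesis. This closes the induction.

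The hard part is precisely this last configuration --- a non-trivial $T\lp M\rp$ all of whose edge groups are parabolic of rank at least $2$ --- and dealing with it has three ingredients: pinning down the vertex types to kill the $2$--ended and quadratically hanging cases, transferring a finite-or-virtually-cyclic splitting of a rigid vertex group up to $G$ by refinement (which is why the ``relative to $\Per$'' clause must be carried through the induction, so that the relevant edge groups remain elliptic), and finally the almost-malnormality/self-commensuration argument that collapses the all-virtually-abelian case down to $G$ itself being virtually abelian. Some minor care is also needed to see that the refined splitting stays relative to $\Per$ and that collapsing does not accidentally trivialize it.
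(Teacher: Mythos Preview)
Your argument is correct and follows the same core strategy as the paper: induct on hierarchy depth, and when no edge of the canonical JSJ is $2$--ended, refine the JSJ tree by a splitting of a rigid vertex group to produce a virtually cyclic edge for $G$. The organisational difference is that you run a direct induction (pushing the ``relative to $\Per$'' clause through so that the refinement step goes through), whereas the paper argues by cases on the depth of the BHH--JSJ and then, in the general case, takes the \emph{maximal} depth $k$ at which a vertex group fails to split over a virtually cyclic subgroup and derives a contradiction via the same refinement.

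Two remarks. First, your final ``all vertex groups virtually abelian'' case is in fact vacuous: a rigid vertex group of the Haulmark--Hruska JSJ has limit set equal to a piece of $\partial(G,\Per)$, which is never a single point, so such a group cannot be parabolic and hence cannot be $1$--ended virtually abelian. The paper uses exactly this observation (together with bipartiteness of $T(M)$) to see immediately that no two virtually abelian vertex groups are adjacent, which is why its depth-$1$ case is so short; your malnormality/self-commensuration argument, while correct, is therefore unnecessary. Second, your induction is cleanest if you induct on the length of the \emph{BHH--JSJ} hierarchy (finite by Theorem~\ref{MakeGood}) rather than on an arbitrary witnessing hierarchy: a rigid vertex group $H$ of $T(M)$ then has BHH--JSJ hierarchy strictly shorter than that of $G$ by construction, which is precisely what you need to invoke the inductive hypothesis. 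Invoking ``the argument of Theorem~\ref{MakeGood}'' to transfer a shorter \emph{witnessing} hierarchy to $H$ is a little indirect, since that argument only shows $H$ is contained in a vertex group deeper in $\mathcal H$.
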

\begin{proof}
First, suppose that the BHH--JSJ hierarchy of $G$ has depth one. If $G$ is not relatively $1$--ended, the maximal Bowditch splitting is relative to $\Per$ and over finite subgroups, so the claim holds. If $G$ is relatively $1$--ended, consider the Haulmark-Hruska JSJ of $G$.  Since the JSJ tree is bipartite, no two peripheral groups are adjacent. Virtually Fuchsian vertex groups are not virtually non-cyclic abelian. A rigid vertex group has boundary that is not a point, but each rigid vertex group is hyperbolic relative to the peripheral groups it contains or intersects. In particular, since rigid vertex groups and quadratically hanging vertex groups are not virtually abelian, no two virtually abelian groups are adjacent in the JSJ tree of $G$. Since virtually free and virtually Fuchsian groups do not contain non-cyclic abelian subgroups, all edges in this splitting must be $2$--ended, and again the claim holds.

Now suppose that $G$ is not relatively $1$--ended, and has BHH--JSJ hierarchy of depth 2. The splittings of Bowditch is again over virtually cyclic subgroups, and so the claim holds.

Suppose $G$ is relatively $1$--ended, and has BHH--JSJ of depth 2. Suppose that there are no edge stabilizers in the JSJ of $G$ that are $2$--ended. This implies that each edge group is $1$--ended. Suppose that $G_e$ is the edge stabilizer of an edge incident on a vertex $v$ stabilized by $G_v$, and that $G_v$ is non-terminal. This must happen as we assumed the depth of the hierarchy was 2. Since $G_v$ splits over finite groups relative to its induced peripheral structure, $G_e$ is conjugate into one of the vertices in this splitting of $G_v$. However, this is true of each edge incident on $v$, so that the JSJ can be refined by replacing $v$ with the splitting of $G_v$. This indicates that $G$ was not relatively $1$--ended, as it splits relative to $\Per$ over finite groups, a contradiction.

Suppose that the BHH--JSJ hierarchy of $G$ has depth $n$.  By the arguments above a vertex group at depth $n-1$ or $n-2$ splits over a virtually cyclic subgroup. Suppose there  is a vertex group $H$ at depth $k$ that does not split over a virtually cyclic subgroup, chosen so that $k$ is maximal. It will be shown that $H$ is terminal. If the induced hierarchy of $H$ has depth 1 or 2, then as before, $H$ splits over a virtually cyclic subgroup. If $H$ has more than one end, then $H$ splits over a virtually cyclic subgroup. If $H$ is virtually $1$--ended, by assumption the Haulmark-Hruska JSJ of $H$ has only $1$--ended peripheral edge groups. These edge groups act elliptically on the Bowditch splitting of each vertex group $H_v$, as they are $1$--ended and do not split over finite groups, and so each non-terminal vertex group must have trivial Bowditch splitting, else again the JSJ could be refined and $H$ would not be relatively $1$--ended. By the assumption that $k$ was maximal, we must have that each rigid vertex group $H_v$ in the JSJ of $H$ has a cyclic edge in its JSJ splitting. However, since each edge incident on $v$ has edge group that is peripheral in $H_v$, it acts elliptically on the JSJ of $H_v$. Therefore, the JSJ can be refined by replacing $v$ by the JSJ of $H_v$, and so $H$ does split over a $2$--ended subgroup since $H_v$ does. This is a contradiction, so $H$ must be terminal. 
\end{proof}

In particular, if the BHH--JSJ hierarchy of $\lp G,\Per\rp$ is not trivial, then $G$ splits over a virtually cyclic subgroup.

Define the \emph{collapsed JSJ} $\mathcal T'$ for a relatively $1$--ended $G\in\mathcal{ATEH}$ as follows. Let $\mathcal T$ be the canonical JSJ of Haulmark and Hruska. We obtain $\mathcal T'$ from $\mathcal T$ by collapsing all edges that are $1$--ended. By the construction of the JSJ, all edges collapsed are between peripheral vertex groups and rigid pieces that are not quadratically hanging. Additionally, by \cite{HaulmarkLocalCutPoints}*{Theorem 1.3}, we know that the number of ends of an edge group, say between a peripheral group $P$ and a rigid group $H$, is the same as the number of ends of $C\setminus p$ where $p$ is the local cut point corresponding to the group $P$, and $C$ is the limit set of $H$. In particular, it is therefore possible to identify which edges of the JSJ tree are being collapsed from only the Bowditch boundary of $\lp G,\Per\rp$.

Recall the following from Section \ref{SectionSplittings}. A $G$-tree $T$ is an $\lp \mathcal E,\mathcal S\rp$-tree for classes of groups $\mathcal E$ and $\mathcal S$ if all edge groups are in $\mathcal E$ and all groups in $\mathcal S$ act elliptically on $T$.
A $\lp \mathcal E,\mathcal S\rp$-tree $T$ is universally elliptic if each edge stabilizer of $T$ acts elliptically on any other $\lp \mathcal E,\mathcal S\rp$-tree. An $\lp \mathcal E,\mathcal S\rp$-tree $T$ dominates a tree $T'$ if there is a $G$-equivariant map $T\rightarrow T'$. Let $\mathcal E$ be the collection of virtually cyclic subgroups of $G$. A \emph{virtually cyclic JSJ} of $\lp G,\Per\rp$ is a $\lp \mathcal E,\Per\rp$-tree $T$ that is universally elliptic, and dominates any other universally elliptic $\lp \mathcal E,\Per\rp$-tree.

\begin{lemma}\label{IsJSJ}
Suppose that $\lp G,\Per\rp\in \mathcal{ATEH}$ and that $G$ is relatively $1$--ended. Then the collapsed JSJ for $G$ is a virtually cyclic JSJ for $G$.
\end{lemma}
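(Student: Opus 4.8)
The plan is to verify the three defining properties of a virtually cyclic JSJ for the collapsed tree $\mathcal{T}'$: that it is a $(\mathcal{E},\Per)$-tree with $\mathcal{E}$ the class of virtually cyclic subgroups, that it is universally elliptic, and that it dominates every universally elliptic $(\mathcal{E},\Per)$-tree. The first point is essentially built in: $\mathcal{T}'$ is obtained from the Haulmark--Hruska tree $\mathcal{T}$ of Theorem \ref{HHJSJ} by collapsing precisely the $1$-ended (peripheral) edges, so every remaining edge group is $2$-ended, hence virtually cyclic, and every $P\in\Per$ still acts elliptically since it did so in $\mathcal{T}$ and collapsing only merges elliptic fixed sets. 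So the content is in universal ellipticity and domination.

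For universal ellipticity, I would argue as follows. Let $T$ be any $(\mathcal{E},\Per)$-tree, i.e.\ a splitting of $G$ over virtually cyclic subgroups relative to $\Per$; in particular $T$ is an $(\mathcal{E}_{\mathrm{elem}},\Per)$-tree where $\mathcal{E}_{\mathrm{elem}}$ is the class of elementary subgroups. Since $\mathcal{T}$ is the \emph{elementary} JSJ (it is universally elliptic for elementary splittings relative to $\Per$, by Theorem \ref{HHJSJ} and the JSJ property), every edge stabilizer of $\mathcal{T}$ acts elliptically on $T$. The edge stabilizers of $\mathcal{T}'$ are among the edge stabilizers of $\mathcal{T}$ (the $2$-ended ones), so they too act elliptically on $T$. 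This gives universal ellipticity of $\mathcal{T}'$ among $(\mathcal{E},\Per)$-trees.

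For domination, let $T$ be any universally elliptic $(\mathcal{E},\Per)$-tree. I want a $G$-equivariant map $\mathcal{T}'\to T$, equivalently that every vertex stabilizer of $\mathcal{T}'$ is elliptic in $T$. The vertex stabilizers of $\mathcal{T}'$ are: the peripheral vertex groups of $\mathcal{T}$ (these are in $\Per$, so elliptic in $T$ by hypothesis), the non-parabolic $2$-ended vertex groups (virtually cyclic, hence elliptic since they lie in $\mathcal{E}$ and edge-or-vertex stabilizers of a $\mathcal{E}$-tree containing them cannot act loxodromically on another universally-elliptic $\mathcal{E}$-tree — more carefully, a virtually cyclic subgroup acting on a tree either fixes a point or an end, and relative to the $(\mathcal{E},\Per)$ setting one checks the end case is excluded), the quadratically hanging vertex groups, and the \emph{amalgams} formed by a rigid group together with the adjacent peripheral groups it was glued to along $1$-ended edges (since those edges were collapsed). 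For the quadratically hanging and rigid-type vertices the key input is that $\mathcal{T}$ is the elementary JSJ: a rigid vertex group of $\mathcal{T}$ is elliptic in every elementary $(\mathcal{E}_{\mathrm{elem}},\Per)$-tree (that is the defining property of rigidity in the Haulmark--Hruska construction, cf.\ the universal ellipticity / rigidity characterization in \cite{HHCanonSplittings}), hence elliptic in $T$; and the enlarged vertex group (rigid amalgamated with its $1$-ended-edge peripheral neighbors along peripheral subgroups) is still elliptic in $T$ because $T$ is relative to $\Per$ — both the rigid part and each peripheral neighbor fix points of $T$, and they fix a common point since they intersect in an infinite peripheral subgroup which, acting on the tree $T$ with its fixed-point sets being subtrees, forces the union to be elliptic by a standard Helly-type argument for trees. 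For the QH vertices, the known fact that QH subgroups of the elementary JSJ are universally elliptic among $\mathcal{E}$-trees (they only fail to be universally elliptic among \emph{all} elementary trees because they carry finer surface splittings, but over the \emph{virtually cyclic} class within the same QH piece they still act elliptically on $T$; here one uses that a virtually cyclic splitting of $G$ relative to $\Per$ restricts on the QH piece to a splitting dual to a collection of disjoint curves, and the QH group as a whole is elliptic).

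The main obstacle I anticipate is exactly this last ellipticity claim for the quadratically-hanging and the enlarged rigid vertex groups: one must be careful that collapsing the $1$-ended edges does not create a vertex group that genuinely splits over a virtually cyclic subgroup relative to $\Per$ in a way not dominated by $\mathcal{T}'$. The way to handle it is to invoke that $\mathcal{T}$ is \emph{the} canonical elementary JSJ and that a virtually-cyclic splitting is in particular an elementary splitting, so $\mathcal{T}$ dominates $T$; then collapsing the $1$-ended edges of $\mathcal{T}$ to get $\mathcal{T}'$ only collapses edges whose stabilizers are not virtually cyclic, and one checks that the domination map $\mathcal{T}\to T$ factors through $\mathcal{T}\to\mathcal{T}'$ precisely because those collapsed edge groups, being infinite peripheral, are elliptic in $T$ and in fact the two endpoints of each collapsed edge map to the same point of $T$ (again by the relative-to-$\Per$ hypothesis and the Helly property for subtrees sharing the infinite peripheral subgroup). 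This factorization is the crux and I would spend most of the proof establishing it.
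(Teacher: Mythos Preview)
Your overall architecture matches the paper's: verify that $\mathcal{T}'$ is a $(\mathcal{E},\Per)$-tree, deduce universal ellipticity from that of $\mathcal{T}$, and prove domination by showing each vertex stabilizer of $\mathcal{T}'$ is elliptic in an arbitrary universally elliptic $(\mathcal{E},\Per)$-tree $S$. Your universal ellipticity argument is correct and is exactly what the paper does.

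For the amalgam vertices (rigid pieces glued to peripheral neighbours along $1$--ended edges) your Helly-type argument is different from the paper's and in fact works: each $1$--ended edge group, being not virtually cyclic, fixes a \emph{unique} vertex of $S$; the adjacent rigid or peripheral vertex groups are likewise not virtually cyclic and are elliptic in $S$, hence each also fixes a unique vertex; connectedness of the collapsed subtree forces all these fixed vertices to coincide, so the amalgam fixes that common vertex. The paper instead argues via limit sets: it shows $\Lambda G_V$ contains no exact cut pair and no global cut point with $2$--ended complement, hence lies in a half-space for every edge of $S$, and then invokes \cite{HHCanonSplittings}*{Proposition 6.9}. Your route is more elementary and avoids the boundary analysis; the paper's route makes the topological content explicit.

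There is, however, a genuine gap in your treatment of the quadratically hanging vertices. Your claim that ``QH subgroups of the elementary JSJ are universally elliptic among $\mathcal{E}$-trees'' is false: a QH group carries many splittings over virtually cyclic subgroups (dual to simple closed curves), so it is certainly \emph{not} elliptic in an arbitrary $(\mathcal{E},\Per)$-tree. What rescues the situation is precisely the hypothesis that $S$ is \emph{universally elliptic}, which you never use. The paper's argument is: refine $S$ to a JSJ $\hat{S}$ via \cite{GLJSJs}*{Lemma 2.15}, and then apply \cite{GLJSJs}*{Theorem 5.27}, which says the QH vertex group $G_v$ of $\mathcal{T}$ is elliptic in $\hat{S}$, hence in $S$. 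Your final factorization idea (``$\mathcal{T}$ dominates $S$, and the map factors through $\mathcal{T}'$'') does not sidestep this, because establishing that $\mathcal{T}$ dominates $S$ already requires the QH vertices of $\mathcal{T}$ to be elliptic in $S$---which is exactly the missing step.

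A smaller gap: your handling of the non-parabolic $2$--ended vertex groups is, as you acknowledge, incomplete. Such a group could a priori act loxodromically on $S$. The paper disposes of this with a short limit-set argument (either the limit pair coincides with that of an edge group of $S$, giving commensurability and hence ellipticity, or it is disjoint from all of them and \cite{HHCanonSplittings}*{Proposition 6.9} applies).
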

\begin{proof}
Let $\mathcal T'$ be the tree corresponding to the collapsed JSJ of $G$ and let $\mathcal T$ be the tree corresponding to the JSJ of $G$ from Theorem \ref{HHJSJ}. First, we note that non-parabolic $2$--ended vertex groups and quadratically hanging groups of $\mathcal T$ have only $2$--ended incident edge groups, and therefore are still vertex stabilizers of the action on $\mathcal T'$. We note that edge groups in $\mathcal T'$ are edge groups in $\mathcal T$, so $\mathcal T'$ is universally elliptic since $\mathcal T$ is.
Next we must show that $\mathcal T'$ dominates any other universally elliptic $\lp \mathcal E,\Per\rp$-tree $\mathcal S$ where $\mathcal S$ has $2$--ended edge groups and is relative to $\Per$. By definition, any peripheral vertex groups in $\mathcal T'$ are elliptic in $\mathcal S$. The fact that non-peripheral $2$--ended vertex groups and quadratically hanging vertex groups act elliptically on $\mathcal S$ is shown in the same way as in the proof of \cite{HHCanonSplittings}*{Theorem 9.1}. The proof is recalled here. If $G_v$ is a $2$--ended, non-peripheral vertex group of $\mathcal T'$, then it has limit set $\{\eta,\chi\}\in\partial\lp G,\Per\rp$. If this limit set coincides with the limit group of an edge group $G_e$ of $\mathcal S$, then $G_v$ and $G_e$ are co-elementary, and $G_v$ has a finite index subgroup acting elliptically on $\mathcal S$, so $G_v$ acts elliptically on $\mathcal S$ as well. If $\{\eta,\chi\}$ is disjoint from the limit set of each edge group in $\mathcal S$, then $\{\eta,\chi\}$ is contained in the limit set of a vertex group of $\mathcal S$ by \cite{HHCanonSplittings}*{Proposition 6.9}, and therefore fixes that vertex in $\mathcal S$.

Now suppose that $G_v$ is quadratically hanging. By \cite{GLJSJs}*{Lemma 2.15} $\mathcal S$ can be refined to a JSJ $\hat{\mathcal S}$ and by \cite{GLJSJs}*{Theorem 5.27}, $G_v$ acts elliptically on $\hat{\mathcal S}$, and therefore also on $\mathcal S$.

If $G_v$ is rigid, and comes from a rigid vertex in $\mathcal T$, $G_v$ would act elliptically on any splitting of $G$ over elementary subgroups, and in particular it acts elliptically on any splitting over $2$--ended subgroups. 

Lastly, for $v\in \mathcal T'$, $G_v$ may be an amalgamation over $1$--ended edge groups of peripheral groups and rigid vertices of $\mathcal T$. We claim that the limit set $\Lambda G_v$ cannot contain an exact cut pair of $\partial\lp G,\Per\rp$ or a global cut point $p$ of $\partial\lp G,\Per\rp$ where one of the components $C$ of $\partial\lp G,\Per\rp$ is such that $C\setminus p$ is $2$--ended. This follows as in either case there is a $2$--ended edge of $\mathcal T$ that would not be collapsed in $\mathcal T'$. Since $\Lambda G_v$ does not contain such topological features, it is contained in a half-space of each edge of $\mathcal S$, and by \cite{HHCanonSplittings}*{Lemma 6.9}, $\Lambda G_v$ is contained in the limit set of some vertex of $\mathcal S$. Therefore, $G_v$ stabilizes that vertex in $\mathcal S$.

Therefore $\mathcal T'$ dominates any universally elliptic splitting $\mathcal S$, so $\mathcal T'$ is a JSJ for $G$ over $2$--ended subgroups relative to $\Per$.
\end{proof}

\begin{theorem}
Suppose $(G,\Per)\in\mathcal{ATEH}$. Then $G$ has a finite hierarchy relative to $\Per$ over virtually cyclic subgroups terminating in virtually free, virtually abelian, or virtually Fuchsian groups.
\end{theorem}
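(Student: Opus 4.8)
The plan is to build a hierarchy exactly parallel to the BHH--JSJ hierarchy, but using the collapsed JSJ of Lemma~\ref{IsJSJ} in place of the Haulmark--Hruska JSJ at each relatively $1$--ended stage, and then to deduce finiteness from the strong accessibility result of Louder and Touikan together with the virtual torsion-freeness of Lemma~\ref{GisVTF}.

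First I would define the \emph{collapsed BHH--JSJ hierarchy} of $\lp G,\Per\rp$ recursively, just as in Section~\ref{SectionBHHJSJ}: at each vertex labeled by a group $H$ with its induced peripheral structure, if $H$ is virtually free, virtually abelian, or virtually Fuchsian, declare it terminal with the trivial splitting; if $H$ is not relatively $1$--ended, split over finite subgroups relative to its peripheral structure via Bowditch (Theorems~\ref{BowditchConnectedness} and \ref{BowditchAccess}); and if $H$ is relatively $1$--ended, use the collapsed JSJ, which by Lemma~\ref{IsJSJ} is a JSJ of $H$ over virtually cyclic subgroups relative to its peripheral structure. To each vertex group of a splitting we attach the peripheral structure consisting of its infinite intersections with conjugates of elements of $\Per$, with finite and $2$--ended members deleted, exactly as in the definition of the BHH--JSJ hierarchy. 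By Lemma~\ref{SplitsCyclically}, whenever $H$ is not virtually free, virtually abelian, or virtually Fuchsian the chosen splitting is nontrivial (a relatively $1$--ended such $H$ splits over a $2$--ended subgroup relative to its peripheral structure, and since $H$ is not virtually Fuchsian its virtually cyclic JSJ cannot be trivial), so the terminal groups of the hierarchy are precisely the virtually free, virtually abelian, and virtually Fuchsian ones. One must also check that every vertex group appearing is again in $\mathcal{ATEH}$ with its induced structure, so that Lemmas~\ref{SplitsCyclically} and \ref{IsJSJ} can be re-applied one level down: the relatively $1$--ended vertex groups of a Bowditch splitting and the rigid vertex groups of a Haulmark--Hruska JSJ are in $\mathcal{ATEH}$ as in the proof of Theorem~\ref{MakeGood}, while a vertex group of a collapsed JSJ that is an amalgam of peripheral groups with rigid pieces along $1$--ended (peripheral) edge groups is relatively quasi-convex in $G$, hence relatively hyperbolic relative to virtually abelian subgroups, and inherits a finite elementary hierarchy relative to its peripheral structure from its pieces, so it too lies in $\mathcal{ATEH}$.

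It then remains to prove that the hierarchy just constructed is finite. Every splitting it uses is over finite or virtually cyclic subgroups and is relative to $\Per$. By Lemma~\ref{GisVTF}, $G$ has a finite-index torsion-free subgroup, so $G$ virtually contains no $2$--torsion; hence by the strong accessibility result of Louder and Touikan \cite{LTStrongAccess}*{Corollary~2.7} any hierarchy of $G$ over finite and virtually cyclic (slender) subgroups has length bounded independently of the hierarchy, and in particular the collapsed BHH--JSJ hierarchy terminates. Together with the identification of the terminal groups above, this produces the desired finite hierarchy of $G$ relative to $\Per$ over virtually cyclic subgroups whose terminal groups are virtually free, virtually abelian, or virtually Fuchsian.

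I expect the main obstacle to be twofold: verifying carefully that the amalgamated vertex groups produced by collapsing the $1$--ended edges of the Haulmark--Hruska JSJ genuinely lie in $\mathcal{ATEH}$ (so that the recursion is well posed and Lemma~\ref{IsJSJ} applies at every level), and invoking Louder--Touikan correctly — either in a form that already allows virtually no $2$--torsion, or by passing to a finite-index torsion-free subgroup, bounding the length of the induced hierarchy there, and transferring that bound back to $G$.
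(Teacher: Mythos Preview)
Your proposal is correct and follows essentially the same strategy as the paper: build the hierarchy by alternating Bowditch's finite splitting with the collapsed JSJ of Lemma~\ref{IsJSJ}, use Lemma~\ref{SplitsCyclically} to identify the terminal groups, and deduce finiteness from Louder--Touikan after invoking Lemma~\ref{GisVTF} to kill $2$--torsion. The only substantive difference is that you appeal directly to \cite{LTStrongAccess}*{Corollary~2.7} (an alternative the paper itself flags just before Theorem~\ref{MakeGood}), whereas the paper applies \cite{LTStrongAccess}*{Theorem~2.5} and then runs a short MakeGood-style argument on the deep vertex groups; the paper also handles the passage to a torsion-free finite-index subgroup explicitly by observing that the cyclic JSJ hierarchy is read off the Bowditch boundary and hence agrees for commensurable groups, which is exactly the ``transfer back to $G$'' you anticipated. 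Your caution about the amalgamated vertex groups lying in $\mathcal{ATEH}$ is well placed---the paper asserts ``$G_v\in\mathcal{ATEH}$ since $G\in\mathcal{ATEH}$'' without further comment---so your more careful treatment there is, if anything, an improvement.
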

\begin{proof}
We let $\mathcal H$ be the hierarchy for $\lp G,\Per\rp$ where at each level we take the splitting over finite groups relative to $\Per$ of Bowditch, and then the collapsed JSJ from Lemma \ref{IsJSJ}. Call this hierarchy the cyclic JSJ hierarchy. We assume all peripheral and virtually Fuchsian groups are terminal, and give non-terminal vertex groups the induced peripheral structure, without any $2$--ended groups included. This is a JSJ hierarchy for $G$ over virtually cyclic subgroups relative to $\Per$. Let $G_0\leq G$ be a finite index, torsion-free subgroup of $G$, which exists by Lemma \ref{GisVTF}. We note that since $\mathcal H$ can be determined entirely from the topology of the boundary $\partial\lp G,\Per\rp$, the cyclic JSJ hierarchy of $G_0$, $\mathcal H'$, is defined in the same way, so that $\mathcal H'$ is finite if and only if $\mathcal H$ is. Thus, we can take $G$ to be torsion-free. In this case, since $G$ has no 2-torsion, there cannot be any slender subgroups of $G$ which act dihedrally on a tree in $\mathcal H$, so $\mathcal H$ is a hyperbolic hierarchy. In particular, every slender subgroup of $G$ is either elliptic or fixes an end of any tree in $\mathcal H$. As all edge groups are virtually cyclic, and therefore Noetherian, $\mathcal H$ satisfies the ascending chain condition needed in \cite{LTStrongAccess}*{Theorem 2.5}. Since $G$ is assumed to be finitely presented, it is $\mathcal H$-almost finitely presented. Then by \cite{LTStrongAccess}*{Theorem 2.5}, there exists an $N$ and $C$ so that each vertex group $G_v$ of depth at least $N$ has a hierarchy $\mathcal K_v$ of height at most $C$ whose terminal vertex groups are either $\mathcal H$-elliptic or slender. 

Let $G_v$ be a group in $\mathcal H$ of depth at least $N$. We note that $G_v$ inherits a peripheral structure from $G$ and a hierarchy from $\mathcal H$. If $G_v$ is not terminal, the inherited hierarchy is a JSJ hierarchy. Using the same argument from Theorem \ref{MakeGood}, with the argument of Lemma \ref{IsJSJ} in place of the argument for rigid vertex groups, the existence of a finite virtually cyclic hierarchy $\mathcal K_v$ relative to $\Per$ implies that the inherited JSJ hierarchy of $G_v$ from $\mathcal H$ is finite. In particular, $\mathcal H$ is finite.

Now, it only remains to be shown that the terminal vertex groups of $\mathcal H$ are virtually free, virtually abelian, or virtually Fuchsian. Suppose that $G_v$ is a terminal vertex group of $\mathcal H$. Then $G_v\in\mathcal{ATEH}$ since $G\in\mathcal{ATEH}$. If $G_v$ is not virtually free, virtually abelian, or virtually Fuchsian, then $G_v$ splits over a virtually cyclic subgroup by Lemma \ref{SplitsCyclically}, so it is not terminal in $\mathcal H$.
\end{proof}

\bibliography{bibliography}

\end{document}